\theoremstyle{plain}
\newtheorem{theorem}{Theorem}[section]
\newtheorem{lemma}[theorem]{Lemma}
\newtheorem{proposition}[theorem]{Proposition}
\newtheorem{conjecture}[theorem]{Conjecture}
\theoremstyle{definition}
\newtheorem{example}[theorem]{Example}
\newtheorem{definition}[theorem]{Definition}
\newtheorem{remark}[theorem]{Remark}
\DeclareMathOperator{\Gal}{Gal}
\DeclareMathOperator{\Hom}{Hom}
\DeclareMathOperator{\Spec}{Spec}
\DeclareMathOperator{\N}{N}
\DeclareMathOperator{\im}{im}
\newcommand{\CC}{\mathbb{C}}
\newcommand{\NN}{\mathbb{N}}
\newcommand{\QQ}{\mathbb{Q}}
\newcommand{\RR}{\mathbb{R}}
\newcommand{\ZZ}{\mathbb{Z}}
\newcommand{\Z}{\mathbb{Z}}
\newcommand{\cA}{\mathcal{A}}
\newcommand{\calF}{\mathcal{F}}
\newcommand{\cH}{\mathcal{H}}
\newcommand{\cI}{\mathcal{I}}
\newcommand{\cP}{\mathcal{P}}
\newcommand{\cR}{\mathcal{R}}
\newcommand{\cS}{\mathcal{S}}
\newcommand{\co}{\mathcal{O}}
\newcommand{\cG}{\mathcal{G}}
\newcommand{\cK}{\mathcal{K}}
\newcommand{\cO}{\mathcal{O}}
\newcommand{\fq}{\mathfrak{q}}
\newcommand{\fn}{\mathfrak{n}}
\newcommand{\fm}{\mathfrak{m}}
\newcommand{\fQ}{\mathfrak{Q}}
\newcommand{\cN}{\mathcal{N}}
\def\bigcapp{\raise1ex\hbox{\rotatebox{180}{$\biguplus$}}}
\def\bigcappd{\raise1ex\hbox{\rotatebox{180}{$\displaystyle\biguplus$}}}
\newcommand{\ord}{\mathrm{ord}}
\newcommand{\ram}{\mathrm{ram}}
\newcommand{\tor}{\mathrm{tor}}
\newcommand{\tf}{\mathrm{tf}}
\newcommand{\ab}{\mathrm{ab}}
\newcommand{\GES}{\mathrm{RES}}
\newcommand{\ES}{\mathrm{ES}}
\newcommand{\VS}{\mathrm{VS}}
\newcommand{\Pmod}[1]{\ (\mathrm{mod}\ #1)}
\newcommand{\Fr}{\mathrm{Fr}}
\newcommand{\catname}[1]{\textnormal{{\textsf{#1}}}}
\newcommand{\Det}{\catname{d}}
\newcommand{\bidual}{\bigcap\nolimits}
\newcommand{\exprod}{\bigwedge\nolimits}
\newcommand{\DR}{\catname{R}}
\newcommand{\DL}{\catname{L}}
\begin{document}

\title[]{On Euler systems\\ for the multiplicative group \\ over general number fields}

\author{David Burns, Alexandre Daoud, Takamichi Sano and Soogil Seo}

\begin{abstract} We formulate, and provide strong evidence for, a natural generalization of a conjecture of Robert Coleman concerning Euler systems for the multiplicative group over arbitrary number fields.
\end{abstract}

\address{King's College London,
Department of Mathematics,
London WC2R 2LS,
U.K.}
\email{david.burns@kcl.ac.uk}

\address{King's College London,
Department of Mathematics,
London WC2R 2LS,
U.K.}
\email{alexandre.daoud@kcl.ac.uk}

\address{Osaka City University,
Department of Mathematics,
3-3-138 Sugimoto\\Sumiyoshi-ku\\Osaka\\558-8585,
Japan}
\email{sano@sci.osaka-cu.ac.jp}

\address{Yonsei University,
Department of Mathematics,
Seoul,
Korea.}
\email{sgseo@yonsei.ac.kr}

\thanks{2000 {\em Mathematics Subject Classification.} Primary: 11R42; Secondary: 11R27.\\
\hskip 0.155truein Preliminary version of May 2019.}

\maketitle


\section{Introduction}\label{intro}

\subsection{}Ever since its introduction by Kolyvagin in the late 1980s, the theory of Euler systems has played a vital role in the proof of many celebrated results concerning the structure of Selmer groups attached to $p$-adic representations that are defined over number fields and satisfy a variety of technical conditions.

In order to consider a wider class of representations, the theory has also been developed in recent years to incorporate a natural notion of `higher rank' Euler systems.

Given their importance, it is clearly of interest to understand the full collection of Euler systems (of the appropriate rank) that arise in any given setting.

In the concrete setting of (rank one) Euler systems that are attached to the multiplicative group $\mathbb{G}_m$ over abelian extensions of $\QQ$, a conjecture of Coleman concerning circular distributions can be seen to imply that all such systems should arise in an elementary way from the classical theory of cyclotomic units.

This conjecture of Coleman was itself motivated by an `archimedean characterization' of norm-coherent sequences of cyclotomic units that he had obtained in \cite{coleman2} and hence to attempts to understand a globalized version of the fact that all norm-compatible families of units in towers of local cyclotomic fields arise by evaluating a `Coleman power series' at roots of unity.

To consider the analogous problem for an arbitrary number field $K$ we write $r_K$ for the number of archimedean places of $K$ and $K^{\rm s}$ for the maximal abelian extension of $K$ in which all archimedean places split completely.

Then, by adapting a general construction for $p$-adic representations that is described by the first and third authors in \cite{sbA}, we shall first define (unconditionally) a module ${\rm ES}^{\rm b}_K$ of `basic' Euler systems of rank $r_K$ for $\mathbb{G}_m$ relative to the extension $K^{\rm s}/K$.

In the case $K = \QQ$ we shall prove in Theorem \ref{etnc prop} that ${\rm ES}^{\rm b}_K$ is generated by the restriction of the cyclotomic Euler system to real abelian fields.

In the general case, we shall show in Theorem \ref{tech req} that ${\rm ES}^{\rm b}_K$ contains systems that are directly related to the leading terms of Artin $L$-series at $s=0$.

We shall then predict that essentially all Euler systems of rank $r_K$ for $\mathbb{G}_m$ over $K^{\rm s}/K$ should belong to ${\rm ES}^{\rm b}_K$ (for a precise statement see Conjecture \ref{main conj}).

In the case $K = \QQ$ we can deduce from recent results of the first and fourth authors in \cite{bs} that Conjecture \ref{main conj} is equivalent to Coleman's original conjecture on circular distributions, and hence that the results of loc. cit. give strong evidence for our conjecture in this case.

In addition, to obtain evidence for the general case of Conjecture \ref{main conj} we can incorporate the construction of basic Euler systems into the equivariant theory of higher rank Euler, Kolyvagin and Stark systems for $\mathbb{G}_m$ that is developed by Sakamoto and the first and third authors in \cite{bss3}.

In particular, in this way we shall prove in Theorem \ref{koly pre thm} (and Theorem \ref{koly thm}) that the main result of \cite{bss3} leads to some strong, and unconditional, evidence in support of Conjecture \ref{main conj}.

In fact, we find that even in the case $K = \QQ$ the latter result constitutes a strong improvement of results in the literature.

For example, in Theorem \ref{nc thm}, we shall use it to prove a natural algebraic analogue of Coleman's `archimedean characterization' of norm coherent sequences in towers of cyclotomic fields (which is the main result of \cite{coleman2}). 

\subsection{Notation} In the remainder of the introduction we shall, for the convenience of the reader, collect together various notations that we employ throughout this article.

\subsubsection{Arithmetic}
We fix an algebraic closure $\QQ^c$ of $\QQ$ and an algebraic closure $\QQ_p^c$ of $\QQ_p$ for each prime number $p$.

Throughout the article, $K$ will denote a number field (that is, a finite extension of $\QQ$ in $\QQ^c$) and $K^{\ab}$ the maximal abelian extension of $K$ inside $\QQ^c$. We write $S_\infty(K)$ for the set of archimedean places of $K$ and denote its cardinality by $r_K$.

For each extension $\mathcal{K}$ of $K$ in $K^{\rm ab}$ we consider the following collection of intermediate fields
\[ \Omega(\mathcal{K}/K) := \{ F \mid  K \subset F \subseteq \mathcal{K}, \,\,F/K \,\text{ is ramified and of finite degree}\}.\]

For each field $E$ in $\Omega(\mathcal{K}/K)$ we set
\begin{align*}
    \mathcal{G}_E := \Gal(E/K)
\end{align*}
and we write $S_\ram(E/K)$ for the (finite, non-empty) set of places of $K$ that ramify in $E$.

For any set $S$ of places of $K$, we then set
$$S(E):=S \cup S_{\rm ram}(E/K).$$

We write $S_E$ for the set of places of $E$ lying above those in $S$. The ring of $S_E$-integers of $E$ is denoted by
\[ \mathcal{O}_{E,S} := \{a \in E\, |\, \ord_w(a) \geq 0 \text{ for all non-archimedean } w \not\in S_E\}
\]
where $\ord_w$ is the normalised additive valuation relative to $w$.

Given a finite set $T$ of places of $K$ that is disjoint from $S\cup S_\infty(K)$, we define the $(S,T)$-unit group of $E$ to be the finite index subgroup of $\mathcal{O}_{E,S}^\times$ given by
\begin{align*}
    \mathcal{O}_{E,S,T}^\times := \{a \in \mathcal{O}_{E,S}^\times \,|\, a \equiv 1 \Pmod{w} \text{ for all } w \in T_E\}.
\end{align*}

We denote by $Y_{E,S}$ the free $\mathbb{Z}$-module on $S_E$ and $X_{E,S}$ its augmentation kernel so that there is a tautological short exact sequence
\begin{equation*}\label{augmentation-SES}
  0 \to X_{E,S} \to Y_{E,S} \to \mathbb{Z} \to 0.
\end{equation*}

Given a place $w$ of $E$, we denote by $\mathcal{G}_{E,w}$ the decomposition subgroup of $\cG_E$ relative to $w$. If $w$ is non-archimedean, we denote by $\kappa(w)$ the residue field relative to $w$ and write $\mathrm{N}w$ for the cardinality of $\kappa(w)$. Let $v$ be the place of $K$ lying under $w$. If $v$ does not ramify in $E$ then we write $\Fr_v \in \mathcal{G}_{E,w}$ for the Frobenius automorphism relative to $w$.

We denote by $\mathcal{R}_{\mathcal{K}/K}$ the completed group ring $\mathbb{Z}[[\Gal(\mathcal{K}/K)]]$. For each $E$ in $\Omega(\mathcal{K}/K)$ we write $\mu_E$ for the torsion subgroup of $E^\times$ and $\mathcal{A}_E$ for the annihilator in $\ZZ[\cG_E]$ of $\mu_E$. Then for each $E'$ with $E \subseteq E'$ one has $\pi_{E'/E}(\mathcal{A}_{E'}) =\mathcal{A}_E$ (see Lemma \ref{torsion lemma}(ii)) and so we obtain an ideal of $\mathcal{R}_{\mathcal{K}/K}$ by setting
\[ \mathcal{A}_{\mathcal{K}/K} := \varprojlim_{E \in \Omega(\cK/K)}\mathcal{A}_E.\]

We write $K^{\rm s}$ for the maximal subfield of $K^{\rm ab}$ in which all archimedean places of $K$ split. We then abbreviate the modules $\mathcal{R}_{K^{\rm ab}/K}$, $\mathcal{R}_{K^{\rm s}/K}$, $\mathcal{A}_{K^{\rm ab}/K}$ and $\mathcal{A}_{K^{\rm s}/K}$ to $\mathcal{R}_K$, $\mathcal{R}^{\rm s}_K, \mathcal{A}_{K}$ and $\mathcal{A}^{\rm s}_{K}$ respectively.

For each natural number $m$ we write $\mu_m$ for the group of $m$-th roots of unity in $\QQ^c$. For each $n$ we fix a generator $\zeta_n$ of $\mu_n$ such that $\zeta_{mn}^m = \zeta_n$ for all $m$ and $n$.

\subsubsection{Algebra}

Let $R$ be a commutative noetherian ring and $X$ an $R$-module. We set $X^* := \Hom_R(X, R)$. 
 We write $X_\tor$ for the torsion subgroup of $X$ and $X_\tf$ for the associated torsion-free quotient $X/X_\tor$. We also write $\widehat{X}$ for the profinite completion of $X$ and $\widehat{X}^p$ for its pro-$p$ completion at a rational prime $p$.

For a non-negative integer $r$ the $r$-th exterior power bidual of $X$ is defined by setting
\begin{align*}
     \bidual_{R}^r X := \left(\exprod_R^r (X^*)\right)^*.
\end{align*}
We recall that there exists a natural homomorphism of $R$-modules
$$\xi_X^r:{\bigwedge}_R^r X \to \bidual_{R}^r X; \ x \mapsto (\Phi \mapsto \Phi(x))$$
that is, in general, neither injective nor surjective (see \cite[(1)]{sbA}).

Exterior power biduals play an essential role in the theory of higher rank Euler, Kolyvagin and Stark systems (see, for example, \cite{bss3} and \cite{sbA}).

In particular, if $R = \mathbb{Z}[G]$ for a finite abelian group $G$, then $\xi_X^r$ induces an identification 
\begin{equation}\label{bidual-lattice-identification}
  \left\{x \in \exprod_{\QQ[G]}^r (\QQ \otimes_\ZZ X) \,\middle|\, \Phi(x) \in R \text{ for all } \Phi \in \exprod_{R}^r (X^*)\right\}   \xrightarrow{\sim}  \bidual_R^r X \end{equation}
(cf. \cite[Prop. A.8]{sbA}). Lattices of this form were first used by Rubin in \cite{rubin} to formulate an integral refinement of Stark's Conjecture.

In order to ease notations for objects associated to group rings, we shall sometimes abbreviate a subscript `$\mathbb{Z}[G]$' to `$G$' (for example, writing `${\bigcap}_{G}^r$' rather than `${\bigcap}_{\ZZ[G]}^r$').

We denote the cardinality of a finite set $X$ by $|X|$.

\section{Statement of the conjecture and main results}\label{basic prop sec}

\subsection{Euler systems}We first introduce the various notions of Euler system with which we shall be concerned.

In this section, we set $S:=S_\infty(K)$ so that $S(E)=S_\infty(K)\cup S_{\rm ram}(E/K)$ for any finite extension $E/K$.

Fix a field $\mathcal{K}$ in $K^{\ab}$, an integer $r \geq 0$ and a pair of fields $E$ and $E'$ in $\Omega(\mathcal{K}/K)$ with $E \subseteq E'$. Then the field-theoretic norm map $\N_{E'/E}: \QQ \otimes_{\mathbb{Z}} \mathcal{O}_{E',S(E')}^\times \to \QQ \otimes_{\mathbb{Z}} \mathcal{O}_{E,S(E')}^\times$ induces a commutative diagram

$$
\xymatrix{\displaystyle{\bidual_{\mathcal{G}_{E'}}^r \mathcal{O}_{E',S(E')}^\times} \ar[d]^{{\rm N}^r_{E'/E}} \ar@^{^{(}->}[r] & \displaystyle{\QQ \otimes_\ZZ \exprod_{\mathcal{G}_{E'}}^r \mathcal{O}_{E',S(E')}^\times} \ar[d]^{{\rm N}^r_{E'/E}}\\
\displaystyle{\bidual_{\mathcal{G}_{E}}^r \mathcal{O}_{E,S(E')}^\times} \ar@^{^{(}->}[r] & \displaystyle{\QQ \otimes_\ZZ \exprod_{\mathcal{G}_{E}}^r \mathcal{O}_{E,S(E')}^\times}}
$$
where the horizontal inclusions are induced by the identification (\ref{bidual-lattice-identification}).


\begin{definition}
    A `rational Euler system of rank $r$' for $\mathbb{G}_m$ and $\mathcal{K}/K$ is a collection
    \begin{align*}
        c=(c_E)_{E  } \in \prod_{E \in \Omega(\mathcal{K}/K)} \left(\QQ \otimes_\ZZ {\bigwedge}_{\cG_E}^r \cO_{E,S(E)}^\times\right)
    \end{align*}
that satisfies the following distribution relations: for every pair of fields $E$ and $E'$ in $\Omega(\mathcal{K}/K)$ with $E\subseteq E'$ one has
\begin{equation}\label{dist rel} {\N}_{E'/E}^r(c_{E'})= \left( \prod_{v \in S(E')\setminus S(E)} (1-{\rm Fr}_v^{-1})\right)c_E \end{equation}
in $\QQ \otimes_\ZZ {\bigwedge}_{\cG_E}^r \cO_{E,S(E')}^\times$.

The set ${\GES}_r(\mathcal{K}/K)$ of all rational Euler systems of rank $r$ has a natural structure as an $\mathcal{R}_{\cK/K}$-module. If $r = r_K$ and $\mathcal{K} = K^{\rm s}$, then we abbreviate ${\GES}_r(\cK/K)$ to
${\GES}_K$.
\end{definition}

\begin{example} A classical example of a rational Euler system is given by the system of cyclotomic units. In fact, if for each $E$ in $\Omega(\mathbb{Q}^{\rm s}/\mathbb{Q})$ of (non-trivial) conductor $\mathfrak{f}(E)$, so that $E \subseteq \mathbb{Q}(\zeta_{\mathfrak{f}(E)}),$ one sets $c_E := \N_{\mathbb{Q}(\zeta_{\mathfrak{f}(E)})/E}(1-\zeta_{\mathfrak{f}(E)})$, then the collection $(c_E)_E$ belongs to ${\rm RES}_\QQ$.

More generally, the Euler system of (conjectural) Rubin-Stark elements constitutes an element of $\GES_K$. In particular, for each field $E$ in $\Omega(K^{\rm s}/K)$, the (`$T$-less') Rubin-Stark conjecture for the data set $(E/K, S(E), \varnothing, S_\infty(K))$ is formulated in \cite[Conj. A]{rubin} and predicts the existence of an element $c_E$ in $\QQ \otimes_\ZZ \exprod_{\mathcal{G}_E}^r \mathcal{O}_{E,S(E)}^\times$ that is related by the Dirichlet regulator isomorphism to the $r$-th order $S(E)$-truncated Stickelberger element of $E/K$. Then the argument of \cite[Prop. 6.1]{rubin} shows that $(c_E)_E$ belongs to $\GES_K$.
\end{example}

\begin{definition}\label{def es} `An Euler system of rank $r$' for the pair $(\mathbb{G}_m, \mathcal{K}/K)$ is an element $c$ of ${\GES}_r(\mathcal{K}/K)$ with the property that for every field $E$ in $\Omega(\cK/K)$ the element $c_E$ belongs to the lattice ${\bidual}_{\cG_E}^r \cO_{E,S(E)}^\times$.

The collection ${\ES}_r(\mathcal{K}/K)$ of such systems is an $\cR_{K}$-submodule of
${\GES}_r(\mathcal{K}/K)$. If $r = r_K$ and $\mathcal{K} = K^{\rm s}$, then we abbreviate ${\ES}_r(\mathcal{K}/K)$ to
${\ES}_K$.
\end{definition}

\begin{remark}\label{not classical}The module ${\rm ES}^{\rm cl} := {\rm ES}^{\rm cl}(\mathcal{K}/K)$ of `classical' (rank one) Euler systems' for $\mathbb{G}_m$ and $\mathcal{K}/K$ comprises all elements $c = (c_E)_E$ of $\prod_E\cO_{E,S(E)}^\times$, where $E$ runs over $\Omega(\mathcal{K}/K)$, with the property that for all pairs $E$ and $E'$ with $E \subseteq E'$ the distribution relation (\ref{dist rel}) is valid in the group $\cO_{E,S(E')}^\times$.

In particular, since for every $E$ the module $\bidual_{\mathcal{G}_E}^1 \mathcal{O}_{E,S(E)}^\times := (\mathcal{O}_{E,S(E)}^\times)^{**}$ identifies with $\mathcal{O}^\times_{E,S(E), \tf}$ the natural reduction map from ${\rm ES}^{\rm cl}$ to the module ${\rm ES}_1 := {\rm ES}_1(\mathcal{K}/K)$ need be neither injective nor surjective.

However, if $c$ belongs to ${\rm ES}_1$, then for any element $a$ of $\mathcal{A}_{\mathcal{K}/K}$ there exists a canonical system $c' = c'_{a}$ in  ${\rm ES}^{\rm cl}$ that projects to give $c^a$ in ${\rm ES}_1$: for each $E$ in $\Omega(\mathcal{K}/K)$ one need only define $c'_E$ to be $(\tilde c_E)^{a_E}$ where $\tilde c_E$ is any choice of element of $\cO_{E,S(E)}^\times$ that projects to $c_E$ in ${\bigcap}^1_{\cG_E}\cO_{E,S(E)}^\times$.
\end{remark}

\subsection{The conjecture and main results}In \S\ref{basic es section} below we will use certain natural families of \'etale cohomology complexes to construct a canonical `invertible' $\mathcal{R}^{\rm s}_K$-module ${\rm VS}(K^{\rm s}/K)$ and to prove the existence of a canonical non-zero homomorphism
\[ \Theta^{\rm s}_K: {\rm VS}(K^{\rm s}/K) \to {\rm RES}_K\]
%
of $\mathcal{R}_K$-modules with the property that
\begin{equation}\label{1st inclusion} \mathcal{A}^{\rm s}_K\cdot \im(\Theta_K^{\rm s}) \subseteq {\rm ES}_K.\end{equation}

The constructions of the module ${\rm VS}(K^{\rm s}/K)$ and map $\Theta_K^{\rm s}$ arise by adapting certain generic $p$-adic constructions from \cite{sbA} that are unconditional and both essentially algebraic, and quite elementary, in nature.

As a result, the inclusion (\ref{1st inclusion}) implies that for every field $K$ there exists a `large' module of Euler systems of rank $r_K$ (for details see Theorem \ref{tech req}).

We define the module of `basic Euler systems of rank $r_K$' for $\mathbb{G}_m$ over $K$ by setting
\[ {\rm ES}^{\rm b}_K := \im(\Theta_K^{\rm s}).\]

In this article we shall then study the following conjecture concerning a partial converse to the inclusion (\ref{1st inclusion}).

\begin{conjecture}\label{main conj} $\mathcal{A}^{\rm s}_K\cdot {\rm ES}_K \subseteq \mathcal{A}^{\rm s}_K\cdot {\rm ES}^{\rm b}_K$. \end{conjecture}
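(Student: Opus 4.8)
The plan is to combine a rational description of ${\rm RES}_K$ with the higher-rank Kolyvagin and Stark system formalism of \cite{bss3}, and, in the case $K=\QQ$, to reduce the question to Coleman's conjecture via \cite{bs}. Since the lattices $\bidual_{\mathcal{G}_E}^{r_K}\mathcal{O}_{E,S(E)}^\times$ are torsion-free, it suffices to verify the asserted inclusion after tensoring with $\QQ$ and, separately, after completion at each prime $p$. I would first treat the rational statement, namely that $\QQ\otimes_\ZZ{\rm ES}^{\rm b}_K$ exhausts $\QQ\otimes_\ZZ{\rm RES}_K$. Since ${\rm VS}(K^{\rm s}/K)$ is invertible and $\Theta^{\rm s}_K$ is non-zero, $\QQ\otimes_\ZZ{\rm ES}^{\rm b}_K$ is the image of an invertible $\QQ\otimes_\ZZ\mathcal{R}^{\rm s}_K$-module, and by Theorem \ref{tech req} it contains the Rubin--Stark Euler system. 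One then invokes a rigidity property of ${\rm RES}_K$: because the Euler factors $\prod_{v}(1-\Fr_v^{-1})$ together with the norm maps ${\N}^{r_K}_{E'/E}$ progressively annihilate the contributions of the finite ramified places, the distribution relations over the \emph{whole} tower $K^{\rm s}/K$ force every element of $\QQ\otimes_\ZZ{\rm RES}_K$ to be governed by data of rank one over $\QQ\otimes_\ZZ\mathcal{R}^{\rm s}_K$, so that the Rubin--Stark system already generates it. For $K=\QQ$ this rigidity, phrased in terms of the structure of circular distributions, is exactly a result of \cite{bs}.

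For the integral refinement, which is the heart of the matter, I would run a given $c\in{\rm ES}_K$ through the chain of $\mathcal{R}^{\rm s}_K$-module homomorphisms supplied by \cite{bss3}, from rank-$r_K$ Euler systems to rank-$r_K$ Kolyvagin systems and then to rank-$r_K$ Stark systems. The module of Stark systems is free of rank one over the relevant ring, generated by a canonical Stark system which, by construction, is the image under $\Theta^{\rm s}_K$ of a basic Euler system; the image of $c$ there is therefore a scalar multiple of this generator. One then shows, on the one hand, that the kernel of the composite ``Euler-to-Stark'' map is annihilated by $\mathcal{A}^{\rm s}_K$ -- this being the source of the factor $\mathcal{A}^{\rm s}_K$ in the conjecture, the comparison maps failing to be isomorphisms precisely because of the torsion groups $\mu_E$ -- and, on the other hand, combining with the rational statement above, that the relevant scalar can be realised by an element of $\mathcal{R}^{\rm s}_K$ acting on ${\rm VS}(K^{\rm s}/K)$. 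Together these give $\mathcal{A}^{\rm s}_K\cdot c\in\mathcal{A}^{\rm s}_K\cdot{\rm ES}^{\rm b}_K$.

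Two steps demand genuine care. The first is passing from the finite-level comparisons of \cite{bss3} to a statement about the inverse limit over the enormous, infinitely ramified tower $K^{\rm s}/K$: one needs the comparison maps to respect the natural transition maps along $\Omega(K^{\rm s}/K)$ and to control the associated $\varprojlim^{1}$-terms, for which the eventual finite generation of the relevant cohomology modules over the noetherian intermediate layers, together with the absorbing effect of $\mathcal{A}^{\rm s}_K$, should suffice. The second, and the genuine obstacle, is that the Euler-to-Kolyvagin-to-Stark comparison of \cite{bss3} is only available under nontrivial hypotheses (existence of enough auxiliary ``Kolyvagin'' primes, non-degeneracy of the ambient Selmer structure, control of certain $\mu$-invariants). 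One should therefore not expect a clean unconditional proof of Conjecture \ref{main conj} for general $K$ along these lines; what the argument delivers is the inclusion under such hypotheses -- the unconditional evidence recorded in Theorem \ref{koly pre thm} -- while for $K=\QQ$ it collapses to the rank-one setting and, by \cite{bs}, becomes equivalent to Coleman's conjecture on circular distributions.
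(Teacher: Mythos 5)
The statement you are addressing is a \emph{conjecture}, not a theorem, and the paper does not prove it: the whole point of the article is to \emph{formulate} Conjecture~\ref{main conj} and then assemble evidence (Theorems~\ref{tech req}, \ref{koly pre thm}/\ref{koly thm} and \ref{Q theorem}). There is therefore no proof in the paper to compare against, and a genuine ``proof proposal'' should have flagged this immediately. To your credit, your final paragraph does correctly concede that the argument you sketch cannot give an unconditional proof and only reproduces the evidence structure of the paper; but that concession should have been the opening observation, not an afterthought, since it changes the nature of the entire exercise.

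Within the outline itself there are also several places where the mechanism is misdescribed. First, the ``rational statement'' you propose to establish --- that $\QQ\otimes_\ZZ{\rm ES}^{\rm b}_K$ exhausts $\QQ\otimes_\ZZ{\rm RES}_K$ --- is not proved, or even claimed, in the paper, and it is not obvious: Theorem~\ref{tech req}(iii) constrains only the \emph{ramified} $\chi$-components of a system in ${\rm RES}_K$, so your invocation of a ``rigidity'' of the distribution relations over the whole tower is doing real (and unjustified) work, especially at unramified characters. Second, the factor $\mathcal{A}^{\rm s}_K$ does not arise because a comparison map ``Euler-to-Stark'' has a kernel annihilated by $\mathcal{A}^{\rm s}_K$; it arises as an integrality device. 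In the proof of Theorem~\ref{tech req}(ii) one multiplies by elements $\delta_T = 1-\mathrm{N}v\cdot\Fr_v^{-1}$ of $\mathcal{A}_E$ to pass to $T$-modified cohomology $C_{E,S(E),T}$, where $\mathcal{O}_{E,S(E),T}^\times$ is torsion-free and $\Theta_E$ can be shown to land in the integral bidual lattice via \cite[Prop.~A.11(ii)]{sbA}. Third, the higher-rank Kolyvagin/Stark formalism of \cite{bss3} is invoked by the paper one odd prime $p$ and one character $\chi$ at a time, under the hypotheses (H$_1$)--(H$_5$), yielding the precise statement of Theorem~\ref{koly thm}; it does not assemble into a statement about the $\mathcal{R}^{\rm s}_K$-module ${\rm ES}_K$ itself, and controlling the $\varprojlim^1$ over $\Omega(K^{\rm s}/K)$ is handled in the paper not by ``the absorbing effect of $\mathcal{A}^{\rm s}_K$'' but by the compactness and surjectivity properties established in Proposition~\ref{invert prop} and Lemmas~\ref{partial H map}, \ref{cofinal}. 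In short, the gap is not a missing step but a category error: you have sketched the evidence, not a proof.
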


This conjecture asserts that, modulo minor technical issues concerning torsion, {\em all} Euler systems in ${\rm ES}_K$ should arise via the elementary construction given in \S\ref{vds sec}.

\begin{remark} It can be shown that ${\rm ES}^{\rm b}_\QQ$ is not contained in ${\rm ES}_\QQ$ (see Lemma \ref{tor ann lem}). However, if we set $\Lambda := \ZZ\left[\frac{1}{2}\right]$, then for any totally real field $K$ one has $\Lambda\otimes_\ZZ\mathcal{A}^{\rm s}_K = \Lambda\otimes_\ZZ\mathcal{R}^{\rm s}_{K}$ and so Conjecture \ref{main conj} combines with (\ref{1st inclusion}) to predict $\Lambda\otimes_\ZZ{\rm ES}_K = \Lambda\otimes_\ZZ{\rm ES}^{\rm b}_K$. In fact, at this stage, we do not know an example showing that the inclusion of Conjecture \ref{main conj} should not itself always be an equality.\end{remark}

To obtain evidence for Conjecture \ref{main conj} one can use the equivariant theory of higher rank Kolyvagin systems for $\mathbb{G}_m$, as developed by Sakamoto and the first and third authors in \cite{bss3}. In particular, in this way we shall obtain the following (unconditional) result, a precise statement, and proof, of which is given in \S\ref{koly section}.

\begin{theorem}[{Theorem \ref{koly thm}}]\label{koly pre thm} Fix an odd prime $p$ and for each field $E$ in $\Omega(K^{\rm s}/K)$ write $\Delta_E$ for the maximal subgroup of  $\cG_E$ of order prime to $p$. Then for every system $c$ in ${\rm ES}_K$, every field $E$ in $\Omega(K^{\rm s}/K)$ and every homomorphism $\chi: \Delta_E \to \QQ_p^{c,\times}$ that satisfies certain mild technical hypotheses, the `$\chi$-component' of $c_E$ belongs to the $\ZZ_p[\cG_E]$-module generated by  $\{c'_E \mid c'\in {\rm ES}^{\rm b}_K\}$.\end{theorem}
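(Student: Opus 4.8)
The plan is to deduce Theorem \ref{koly pre thm} from the main comparison result of \cite{bss3} between higher rank Euler systems and higher rank Kolyvagin systems for $\mathbb{G}_m$, by showing that every $c$ in ${\rm ES}_K$ and every $c'$ in ${\rm ES}^{\rm b}_K$ give rise, after the relevant $p$-adic and $\chi$-localisation, to Kolyvagin systems lying in a module that \emph{loc. cit.} proves to be free of rank one, and then comparing the two Kolyvagin systems on that free module. First I would fix the odd prime $p$, the field $E$, and the character $\chi\colon\Delta_E\to\QQ_p^{c,\times}$, and pass to the $\chi$-component of the relevant Iwasawa-theoretic cohomology complexes; the `mild technical hypotheses' on $\chi$ will be exactly those needed to ensure that the hypotheses of the main theorem of \cite{bss3} are satisfied (in particular that the relevant $\chi$-twisted Selmer module is cyclic, or free of the appropriate rank, over the $\chi$-component of $\ZZ_p[\cG_E]$, and that the core rank equals $r_K$).

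Next I would invoke the functorial map from rank $r_K$ Euler systems to rank $r_K$ Kolyvagin systems constructed in \cite{bss3}: applying it to $c$ produces a Kolyvagin system $\kappa(c)$, and applying it to a generator of ${\rm VS}(K^{\rm s}/K)$ (which exists since that module is invertible, by the construction of $\S$\ref{basic es section}) and pushing forward along $\Theta^{\rm s}_K$ produces a family of Kolyvagin systems coming from ${\rm ES}^{\rm b}_K$. The key point, supplied by the main result of \cite{bss3}, is that the module of $\chi$-localised Kolyvagin systems in question is free of rank one over the relevant ring; hence $\kappa(c)_\chi$ is an explicit ring multiple of the distinguished Kolyvagin system attached to ${\rm ES}^{\rm b}_K$. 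Translating this identity back through the (injective, after $\chi$-localisation and away from torsion) regulator/comparison maps of \cite{bss3} between Kolyvagin systems and the unit-group biduals yields that the $\chi$-component of $c_E$ lies in the $\ZZ_p[\cG_E]$-module generated by $\{c'_E\mid c'\in{\rm ES}^{\rm b}_K\}$, which is the assertion.

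There are two technical layers to handle with care. The first is bookkeeping: matching the indexing conventions of the present paper (Euler systems over $\Omega(K^{\rm s}/K)$, with sets of ramified places growing with $E$, and the $(S,T)$-modification needed to kill torsion) with the conventions of \cite{bss3}; this is routine but must be done, and it is where the hypothesis that $\chi$ is suitably non-trivial/non-exceptional is consumed, so that the passage $\cO^\times_{E,S(E)}\rightsquigarrow\cO^\times_{E,S(E),T}$ becomes harmless on $\chi$-components. The second, and the genuine obstacle, is verifying that the output of $\Theta^{\rm s}_K$ on a generator of ${\rm VS}(K^{\rm s}/K)$ maps, under the Euler-to-Kolyvagin functor of \cite{bss3}, to a \emph{generator} of the rank-one Kolyvagin system module for the character $\chi$ --- in other words, that the basic Euler system is `as large as possible' at $\chi$. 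I expect this to follow by unwinding the definition of $\Theta^{\rm s}_K$ in $\S$\ref{basic es section} (which is built directly from the $p$-adic étale cohomology complexes used in \cite{bss3} to define Kolyvagin systems), so that the two constructions are compatible essentially by definition; but making that compatibility precise --- identifying the determinant/bidual module ${\rm VS}(K^{\rm s}/K)$ with the module controlling Kolyvagin systems in \cite{bss3}, compatibly with all localisation maps --- is the heart of the argument and the step I would expect to occupy most of $\S$\ref{koly section}.
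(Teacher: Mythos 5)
Your overall strategy coincides with the paper's: reduce to the main result of \cite{bss3}, namely that the $\chi$-localised module of Kolyvagin systems of rank $r_K$ is free of rank one, and then show that the image of a generator of ${\rm VS}(K^{\rm s}/K)$ (under $\Theta^{\rm s}_K$, after $p$-adic and $\chi$-localisation, and pushed through the Kolyvagin-derivative map $\mathcal{D}_{F,r}$) is a generator of that free module. You have correctly identified all three required ingredients: (a) that $c\in{\rm ES}_K$ gives, after $\chi$-localisation, a strict $p$-adic Euler system in the sense of \cite{bss3} (this is Lemma \ref{first step to proof}(i), and is where the growing sets $S(E)$ and the $T$-modification are reconciled, precisely the ``bookkeeping'' you describe); (b) the freeness of ${\rm KS}_r(F/K,\chi)$ from \cite{bss3}; and (c) the genuine obstacle, namely that the basic Euler system must go to a generator.

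Where your proposal stops short of a proof is exactly point (c), and it is worth naming how the paper actually does it, since it is not merely ``unwinding the definition of $\Theta^{\rm s}_K$.'' The paper does \emph{not} argue directly from the Euler-to-Kolyvagin map; it interposes \emph{Stark systems}. Concretely, Theorem \ref{key diagram} constructs a surjective homomorphism $\Delta^\chi_{F,r}\colon {\rm VS}^{(p)}_{L\cK}\to{\rm SS}_r(F/K,\chi)$ (built, via Lemmas \ref{partial H map} and \ref{cofinal}, by choosing auxiliary primes $\fn$ that kill the $\chi$-part of the class group so that the relevant complexes have free cohomology, mimicking the horizontal determinantal systems of \cite[\S 3.3]{sbA}) and then shows the square
\[
\begin{CD}
{\rm VS}^{(p)}_{L\cK} @>{\Theta^\chi_{L\cK}}>> {\rm ES}_r(\cK/K,\chi)\\
@V{\Delta^\chi_{F,r}}VV @VV{\mathcal{D}_{F,r}}V\\
{\rm SS}_r(F/K,\chi) @>{\mathcal{R}^\chi_r}>> {\rm KS}_r(F/K,\chi)
\end{CD}
\]
commutes, where $\mathcal{R}^\chi_r$ is the algebraic regulator isomorphism between Stark and Kolyvagin systems from \cite[Th.\ 3.3(ii)]{bss3}. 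Surjectivity of $\Delta^\chi_{F,r}$ plus bijectivity of $\mathcal{R}^\chi_r$ then force $\mathcal{D}_{F,r}\circ\Theta^\chi_{L\cK}$ to be surjective, hence to send a generator of ${\rm VS}^{(p)}_{L\cK}$ to a generator of ${\rm KS}_r(F/K,\chi)$. The commutativity itself is nontrivial: it reduces (via the reciprocity maps $\varphi^\fn_\fq$ and the injection $\iota_\fn$) to the identity proved in \cite[Th.\ 5.16]{bks1}. So while you correctly located the crux, your proposal leaves it as a hoped-for compatibility, whereas the paper supplies a specific intermediate object (Stark systems), a specific surjection onto it, and a specific commutative diagram relating it to the Kolyvagin-derivative construction; without that device, I do not see how you would close the argument.
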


If $K$ has at least one real embedding, then $\mu_{L} = \{\pm 1\}$ for all $L$ in $\Omega(K^{\rm s}/K)$ and so $\mathcal{A}_K^{\rm s}$ is the kernel $\mathcal{I}_{K,(2)}$ of the natural `mod $2$ augmentation' map $\mathcal{R}^{\rm s}_K \to \ZZ/(2)$.

In particular, in this case Conjecture \ref{main conj} predicts that every Euler system in ${\rm ES}_K$ has the form $2^{-1}\cdot \Theta_K^{\rm s}(x)$ for some $x$ in $\mathcal{I}_{K,(2)}\cdot {\rm VS}(K^{\rm s}/K)$.

However, as the following result shows, in special cases it can predict much more.

\begin{theorem}\label{Q theorem} If $K = \QQ$, then Conjecture \ref{main conj} is equivalent to Coleman's Conjecture on circular distributions. In particular, if Conjecture \ref{main conj} is valid, then for every $c = (c_F)_F$ in ${\rm ES}^{\rm cl}(\QQ^{\rm ab}/\QQ)$ there exists an element $r_c$ of $\mathcal{R}_\QQ$ such that
\[ c_{\QQ(n)} = \pm (1-\zeta_n)^{r_c}\]
for every $n \not\equiv 2$ (mod $4$).
\end{theorem}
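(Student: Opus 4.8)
The plan is to establish the equivalence of Conjecture \ref{main conj} with Coleman's conjecture on circular distributions, and then to extract the concrete consequence about classical Euler systems over $\QQ$. The key input will be the recent results of the first and fourth authors in \cite{bs}, which reformulate Coleman's conjecture in terms of a precise algebraic statement about the module of circular distributions modulo the `trivial' distributions built from cyclotomic units; I would first recall that reformulation carefully, matching its notation to the present setup.

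First I would unwind the definitions in the case $K = \QQ$. Since $\QQ$ is totally real with $r_\QQ = 1$ and a unique archimedean place that splits in every abelian extension, one has $K^{\rm s} = \QQ^{\rm ab}$ and the module ${\rm RES}_\QQ$ of rational Euler systems of rank $1$ is, after passing to torsion-free quotients, essentially the module of (rationalised) circular distributions in Coleman's sense: the distribution relation (\ref{dist rel}) at rank $1$ is exactly the norm-compatibility defining a circular distribution, and the classical example shows the cyclotomic system lies in ${\rm RES}_\QQ$. The subtlety I would have to handle with care is the passage between ${\rm ES}^{\rm cl}$ (genuine units) and ${\rm ES}_1$ (elements of the exterior bidual, i.e. torsion-free quotients), which is precisely the content of Remark \ref{not classical}: multiplication by an annihilator $a \in \mathcal{A}_\QQ$ lifts a system in ${\rm ES}_1$ to a canonical system in ${\rm ES}^{\rm cl}$, and since $\mu_L = \{\pm 1\}$ for all $L$ in $\Omega(\QQ^{\rm ab}/\QQ)$ we have $\mathcal{A}^{\rm s}_\QQ = \mathcal{I}_{\QQ,(2)}$, the mod-$2$ augmentation ideal. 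This matches the `$\pm$' ambiguity and the `$n \not\equiv 2 \pmod 4$' restriction appearing in the statement.

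Next I would identify ${\rm ES}^{\rm b}_\QQ = \im(\Theta^{\rm s}_\QQ)$ with the module generated by the restriction of the cyclotomic Euler system — this is exactly Theorem \ref{etnc prop}, which I may assume. Thus Conjecture \ref{main conj}, which reads $\mathcal{A}^{\rm s}_\QQ \cdot {\rm ES}_\QQ \subseteq \mathcal{A}^{\rm s}_\QQ \cdot {\rm ES}^{\rm b}_\QQ$, becomes the assertion that, after multiplying by $\mathcal{I}_{\QQ,(2)}$, every rank-one Euler system over $\QQ^{\rm ab}/\QQ$ is an $\mathcal{R}_\QQ$-multiple of the cyclotomic one — which is precisely (the $\mathcal{I}_{\QQ,(2)}$-localised form of) Coleman's conjecture as reformulated in \cite{bs}. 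I would then invoke \cite{bs} in both directions to get the equivalence; the main obstacle here is checking that the torsion bookkeeping on both sides genuinely cancels, i.e. that the `$\mathcal{A}^{\rm s}_\QQ\cdot$' in Conjecture \ref{main conj} corresponds exactly to the hypotheses under which Coleman's conjecture is stated in \cite{bs}, rather than being strictly weaker or stronger.

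Finally, for the concrete consequence: assume Conjecture \ref{main conj} and take $c = (c_F)_F$ in ${\rm ES}^{\rm cl}(\QQ^{\rm ab}/\QQ)$. Its image $\bar{c}$ under the reduction map lies in ${\rm ES}_1 = {\rm ES}_\QQ$, so by the conjecture there is $a \in \mathcal{A}^{\rm s}_\QQ = \mathcal{I}_{\QQ,(2)}$ and $r_c \in \mathcal{R}_\QQ$ with $a \cdot \bar c = a \cdot (\text{cyclotomic})^{r_c}$ in ${\rm ES}_\QQ$. Lifting via Remark \ref{not classical} and using that both $c$ and the canonical lift of $(\text{cyclotomic})^{r_c}$ are genuine unit systems whose ratio is therefore a norm-compatible system of roots of unity, one deduces $c_{\QQ(n)} = \pm (1-\zeta_n)^{r_c}$ for all $n$ with $\mu_{\QQ(n)} = \{\pm 1\}$, i.e. $n \not\equiv 2 \pmod 4$. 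The hard part of this last step is controlling the system of roots of unity that arises as the discrepancy: one needs that a norm-compatible family of elements of $\mu_{\QQ(n)} = \{\pm 1\}$ over all such $n$ is forced to be globally $\pm 1$ up to a universal sign, which follows since the norm map on $\{\pm 1\}$ is trivial in the relevant degrees — this is where the restriction to $n \not\equiv 2 \pmod 4$ is essential, as for $n \equiv 2 \pmod 4$ the field $\QQ(n)$ coincides with $\QQ(n/2)$ and the indexing is redundant.
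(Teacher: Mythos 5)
Your proposal has the right general shape, but it rests on a false premise that would derail the entire argument: you assert that for $K=\QQ$ one has $K^{\rm s}=\QQ^{\rm ab}$ because ``the unique archimedean place splits in every abelian extension''. This is wrong. The real place of $\QQ$ ramifies (the residual places become complex) in any imaginary abelian extension, so $\QQ^{\rm s}$ is the maximal \emph{real} abelian extension $\QQ^{\rm ab,+}$, not $\QQ^{\rm ab}$. This is why the paper introduces $\Omega^+=\Omega(\QQ^{\rm s}/\QQ)$ as the set of (non-trivial) finite \emph{real} abelian fields, and why Theorem \ref{etnc prop} expresses $\varepsilon$ at the level of the fields $\QQ(m)^+$. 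The mismatch you have glossed over is precisely the crux of the proof: circular distributions and ${\rm ES}^{\rm cl}(\QQ^{\rm ab}/\QQ)$ live over the full tower $\mu_\infty$, while ${\rm ES}_\QQ$ and ${\rm ES}^{\rm b}_\QQ$ live only over real abelian fields. The whole technical content of the paper's argument is the careful passage back and forth across this gap.

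Concretely, the paper handles that passage in three places, none of which appear in your sketch. For the implication ``Coleman $\Rightarrow$ Conjecture \ref{main conj}'': starting from $c\in{\rm ES}_\QQ$ (over real fields) and $a\in\mathcal{A}^{\rm s}$, one lifts to $c'\in{\rm ES}^{\rm cl}(\QQ^{\rm s}/\QQ)$ via Remark \ref{not classical}, then \emph{constructs a genuine system $c''$ over all of $\QQ^{\rm ab}$} by setting $c''_{\QQ(m)}:=c'_{\QQ(m)^+}$ for $m\in\mathbb N^\ddagger$ (using the fact, drawn from \cite[Lem.~2.2]{Seo1}, that $c'_{\QQ(m)^+}$ is a positive global unit when $m$ has two distinct prime divisors, so that $\mathrm{N}^{\QQ(m)^+}_\QQ(c'_{\QQ(m)^+})=1$). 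The resulting $c''$ satisfies $(c'')^{\tau-1}=1$, and then Lemma \ref{coho comp}---a Tate-cohomology computation relating the set $\{r\in\mathcal R : (1-\zeta_m)^{r(\tau-1)}=1\ \forall m\}$ to $\mathcal A^{\rm s}$ under projection $\mathcal R\to\mathcal R^{\rm s}$---is what guarantees that the exponent $r$ supplied by Coleman's Conjecture projects to an element $a'$ of $\mathcal A^{\rm s}$, so that $c^a=\varepsilon^{a'}$ lands inside $\mathcal A^{\rm s}\cdot{\rm ES}^{\rm b}_\QQ$. Your proposal invokes \cite{bs} ``in both directions'' but never engages with this $\mathcal R$ vs.\ $\mathcal R^{\rm s}$ bookkeeping, which is exactly where the argument is delicate.

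The converse direction also requires more precision than you allow for: the paper restricts $c\in{\rm ES}^{\rm cl}(\QQ^{\rm ab}/\QQ)$ to $\Omega^+$, reduces mod torsion to get $\tilde c\in{\rm ES}_\QQ$, applies the conjecture with $a=2$ and Theorem \ref{etnc prop} to produce $r\in\mathcal R$ with $\tilde c^4=\varepsilon^{2r}$, and then appeals to the explicit sufficient condition (\ref{explicit coleman}) from \cite{bs} (which itself requires \cite[Th.~5.1 and Rem.~5.2]{bs} and the torsion-freeness of the quotient $\mathcal F^{\rm d}/(\mathcal F^{\rm d}_{\rm tor}+\mathcal R\cdot\Phi)$ from \cite[Th.~1.2]{bs}). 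Your final paragraph's concern about a ``globally universal sign'' is a red herring (the $\pm$ in the theorem is permitted to depend on $n$); the actual use of the condition $n\not\equiv 2\pmod 4$ is merely the elimination of the redundant indices with $\QQ(n)=\QQ(n/2)$, as you do note. But absent the corrected understanding of $\QQ^{\rm s}$, the bridge between ${\rm ES}^{\rm cl}(\QQ^{\rm ab}/\QQ)$ and ${\rm ES}_\QQ$ cannot be built in your sketch.
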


This result shows that Conjecture \ref{main conj} constitutes a natural `generalized Coleman Conjecture' and will be proved in \S\ref{Q thm sec}.

In addition, by combining Theorem \ref{koly pre thm} with the analysis used to prove Theorem \ref{Q theorem} we obtain the following algebraic analogue of the `archimedean characterization' of norm coherent sequences in towers of the form $\bigcup_n\QQ(\mu_{p^n})$ that was given by Coleman in \cite{coleman2}. (We recall that this result was the original motivation for Coleman's study of circular distributions.)

\begin{theorem}\label{nc thm} Let $p$ be an odd prime and $(a_n)_n$ a norm coherent sequence in the tower of fields $\bigcup_n\QQ(\mu_{p^n})$. Then each element $a_n$ belongs to the $\mathcal{R}_\QQ$-module generated by $1-\zeta_{p^n}$ if and only if there exists a circular distribution $f$ and a non-negative integer $t$ such that $f(\zeta_{p^n})^{2^t} = a_{n}$ for all $n$.
\end{theorem}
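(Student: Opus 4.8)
The plan is to reduce this statement about norm-coherent sequences in $\bigcup_n \QQ(\mu_{p^n})$ to the statements already established about Euler systems and circular distributions, using that $p$ is odd (so that the torsion ambiguity involves only powers of $2$) and the equivalence recorded in Theorem \ref{Q theorem}. First I would recall the standard dictionary between circular distributions and elements of ${\rm ES}^{\rm cl}(\QQ^{\rm ab}/\QQ)$: a circular distribution $f$ assigns to each root of unity $\zeta_m$ (with $m\not\equiv 2 \bmod 4$) a unit $f(\zeta_m)$ subject to the norm-compatibility relations, and this is precisely the data of a classical Euler system for $\QQ^{\rm ab}/\QQ$ in the sense of Remark \ref{not classical}. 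Under this dictionary the hypothesis ``$f(\zeta_{p^n})^{2^t}=a_n$ for all $n$'' says that the sequence $(a_n^{1/2^t})_n$ — more precisely the sequence $(a_n)_n$ up to $2$-power — extends to a classical Euler system, and by Theorem \ref{Q theorem} (granting Coleman's conjecture, or rather just invoking the reverse implication that does not need it) such a system has the form $c_{\QQ(n)}=\pm(1-\zeta_n)^{r_c}$ for a single $r_c\in\mathcal{R}_\QQ$. Restricting this identity to the subtower $\QQ(p^n)=\QQ(\mu_{p^n})^{+}$-type fields (and tracking that in $\bigcup_n\QQ(\mu_{p^n})$ the full cyclotomic fields occur) yields $a_n = (1-\zeta_{p^n})^{2^t r_c}$ up to sign, which exhibits $a_n$ in the $\mathcal{R}_\QQ$-module generated by $1-\zeta_{p^n}$: this gives the ``if'' direction.

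For the ``only if'' direction I would start from a norm-coherent sequence $(a_n)_n$ with $a_n \in \mathcal{R}_\QQ\cdot(1-\zeta_{p^n})$ for every $n$, so that $a_n = (1-\zeta_{p^n})^{\rho_n}$ for some $\rho_n$ in the group ring of $\Gal(\QQ(\mu_{p^n})/\QQ)$ (or its completion). The key point is to upgrade the pointwise relations $a_n=(1-\zeta_{p^n})^{\rho_n}$ to a single coherent element $\rho = \varprojlim_n \rho_n \in \mathcal{R}_\QQ$ (or $\mathcal{R}_{\QQ,p}$), using norm-coherence of $(a_n)_n$ together with the norm-coherence of $(1-\zeta_{p^n})_n$ and the fact that the cyclotomic units are, essentially, free of rank one over the relevant Iwasawa algebra away from trivial zeros — this is where I would invoke Theorem \ref{koly pre thm} (equivalently Theorem \ref{koly thm}): it tells us that, $\chi$-component by $\chi$-component for characters $\chi$ of the prime-to-$p$ part and after inverting $2$, every Euler system value lies in the module generated by basic Euler systems, hence by the cyclotomic one, with a well-defined coefficient. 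Feeding $\rho$ back in, one builds a circular distribution $f$ with $f(\zeta_{p^n})=(1-\zeta_{p^n})^{\rho}$ on the $p$-power tower, and then extends it to all of $\QQ^{\rm ab}$ (the extension to conductors prime to $p$, and to mixed conductors, is forced by the distribution relations and uses $p$ odd to see that the obstruction is $2$-torsion); the integer $t$ appears exactly as the $2$-adic denominator of $\rho$ relative to $\mathcal{R}_\QQ$, i.e. the power of $2$ one must multiply by to clear the $\mathcal{A}^{\rm s}_\QQ = \mathcal{I}_{\QQ,(2)}$ denominators coming from the inclusion (\ref{1st inclusion}) and Conjecture \ref{main conj}'s torsion factor.

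The main obstacle, and the step deserving the most care, is the passage from the individual relations $a_n=(1-\zeta_{p^n})^{\rho_n}$ to a genuine element of the completed group ring, i.e. controlling the compatibility of the exponents $\rho_n$ under the transition maps $\pi_{E'/E}$. The difficulty is twofold: first, $\rho_n$ is a priori only defined modulo the annihilator of $1-\zeta_{p^n}$ in the group ring (which is nontrivial precisely because of trivial zeros / the augmentation behaviour at $p$), so one has to choose the $\rho_n$ coherently, and second, the failure of the natural map $\xi^1$ from exterior powers to their biduals to be injective or surjective (as flagged after (\ref{bidual-lattice-identification})) means one must work with the bidual lattice ${\bigcap}^1_{\cG_E}\cO_{E,S(E)}^\times = \cO^\times_{E,S(E),\tf}$ rather than the units themselves, and reconcile the ``classical'' picture of Remark \ref{not classical} with the rank-one Euler system picture. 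I expect this to be handled by first solving the problem after inverting $2$ and after projecting to each $\chi$-eigenspace — where Theorem \ref{koly pre thm} applies cleanly and the cyclotomic unit Euler system is a free generator — and then gluing, with the $2$-primary discrepancy being exactly absorbed into the exponent $t$ in the statement; verifying that the same $t$ works uniformly in $n$ will require noting that the $2$-part of $\Gal(\QQ(\mu_{p^n})/\QQ)$ is bounded (it is $\ZZ/(p-1)$'s $2$-part, independent of $n$), so no unbounded $2$-denominators accumulate down the tower.
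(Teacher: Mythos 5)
Your argument inverts the actual balance of difficulty between the two directions and, crucially, leans on a conditional statement as though it were unconditional.

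For the implication from the existence of $(f,t)$ with $f(\zeta_{p^n})^{2^t}=a_n$ to the containment $a_n\in\mathcal{R}_\QQ\cdot(1-\zeta_{p^n})$ (what you call the ``if'' direction), you invoke Theorem \ref{Q theorem} to write the system attached to $f$ as $c_{\QQ(n)}=\pm(1-\zeta_n)^{r_c}$ for a single $r_c\in\mathcal{R}_\QQ$. That representation is precisely Coleman's Conjecture in the form (\ref{coleman}), and the relevant sentence of Theorem \ref{Q theorem} is explicitly \emph{conditional} on Conjecture \ref{main conj}. There is no ``reverse implication that does not need it''; the unconditional content of Theorem \ref{Q theorem} is only an equivalence between two open conjectures. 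Since the whole point of Theorem \ref{nc thm} is to improve \cite[Th.~B]{Seo2} unconditionally, this is a genuine gap, not a shortcut. What the paper actually does here is: reduce via \cite[Th.~B]{Seo2} to fixed $n$; split $f^{2^t}=f^{(1-\tau)2^{t-1}}f^{(1+\tau)2^{t-1}}$ and settle the $(1-\tau)$-part by \cite[Th.~4.1]{bs}; and then prove $f(\zeta_{p^n})^{1+\tau}\in\ZZ_\ell[G_n]\cdot\varepsilon_n$ for each odd prime $\ell$, treating $\ell=p$ by \cite[Th.~3.1]{bs} and, for $\ell\neq p$, arguing $\phi$-component by $\phi$-component: for the trivial character one uses Sinnott's index formula together with Washington's theorem that the class number of the relevant $\ell$-power degree subfield is prime to $\ell$, and for non-trivial $\phi$ one verifies the hypotheses (H$_1$)--(H$_5$) and applies Theorem \ref{koly thm}. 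You mention none of Sinnott, Washington, the $1\pm\tau$ decomposition, or the localization at odd $\ell$, and you place Theorem \ref{koly pre thm} in the \emph{other} direction of the equivalence, where it plays no role in the paper.

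For the converse implication, from $a_n\in\mathcal{R}_\QQ\cdot(1-\zeta_{p^n})$ to the existence of $(f,t)$, you outline a delicate gluing of exponents $\rho_n$ into a single element of the completed group ring and an extension of the resulting function off the $p$-power tower. This is far more than is needed: the paper simply cites \cite[\S3]{Seo4}, which already shows that any such norm-coherent sequence is the restriction to the $p$-power tower of some circular distribution, so $t=0$ works. The extension ``to all of $\QQ^{\rm ab}$'' that you flag as the main obstacle is in fact the entire content of that reference; you identify the obstruction correctly but do not resolve it, and your proposed resolution (gluing $\rho_n$, invoking Theorem \ref{koly pre thm}) is not how it is, or needs to be, handled.
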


This result improves upon \cite[Th. B]{Seo2} (by showing that the positive integer $c$ in loc. cit. can be taken as a power of $2$) and also gives an affirmative answer, modulo powers of $2$, to the question raised by the third author in \cite[end of \S1]{Seo4}. It will be proved in \S\ref{nc sec}.

\section{Basic Euler systems}\label{basic es section}

In this section we shall define, and establish the key properties of, the homomorphism $\Theta_K^{\rm s}$ that occurs in Conjecture \ref{main conj}.

To do this we fix an extension $\mathcal{K}$ in $K^\ab$ and a finite set of places $S$ of $K$ containing $S_\infty(K)$. (Later in the article we will specialise to $S=S_\infty(K)$.)

Throughout this section we shall also use the following general notation. For any finite abelian group $\mathcal{G}$ and any commutative noetherian ring $R$ we write $D(R[\mathcal{G}])$ for the derived category of $R[\mathcal{G}]$-modules and $D^{\rm perf}(R[\mathcal{G}])$ for the full triangulated subcategory of $D(R[\mathcal{G}])$ comprising complexes that are perfect.

For a bounded above complex of $\mathcal{G}$-modules $C$ we write $C^\ast$ for the object $\DR\Hom_\Z(C,\Z[0])$ of $D(\ZZ[\mathcal{G}])$, where the linear dual is endowed with the natural contragredient action of $\mathcal{G}$.

We write ${\Det}_{R[\cG]}(-)$ for the determinant functor on $D^{\rm perf}(R[\mathcal{G}])$, as constructed by Knudsen and Mumford in \cite{KM}.

We write $\widehat{\mathcal{G}}$ for the group of $\QQ^c$-valued characters of $\mathcal{G}$ and for any such $\chi$ we write $e_\chi$ for idempotent $|\mathcal{G}|^{-1}\sum_{\sigma \in \mathcal{G}} \chi(\sigma)\sigma^{-1}$ of $\QQ^c[\mathcal{G}]$.

For any $\mathcal{G}$-module $M$ we also set $M^\chi := e_\chi\cdot(\QQ^c\cdot M)$, where $\QQ^c\cdot M$ denotes the $\QQ^c[\mathcal{G}]$-module that is generated by $M$.

\subsection{Modified \'etale cohomology complexes}\label{vds sec}

For each $E$ in $\Omega(\mathcal{K}/K)$ and each finite set of places $T$ of $K$ that is disjoint from $S(E)$ the methods of Kurihara and the first and third authors in \cite[\S2]{bks1} define a canonical $T$-modified, compactly supported `Weil-\'etale' cohomology complex $\DR\Gamma_{c,T}((\mathcal{O}_{E,S(E)})_\mathcal{W}, \ZZ)$ of the constant sheaf $\ZZ$ on ${\rm Spec}(\mathcal{O}_{E,S(E)})$.

In the sequel we set
\[ C_{E,S(E),T} := \DR\Gamma_{c,T}((\mathcal{O}_{E,S(E)})_\mathcal{W}, \ZZ)^\ast[-2]\]
and shall use the properties of this complex that are recalled in the following result.

For any finite set $T'$ of places of $K$ that is disjoint from $S(E)$ we set
\[ {\mathbb F}_{T'_E}^\times:= \bigoplus_{w \in T'_{E}} \kappa(w)^{\times}.\]

For each non-archimedean place $v$ of $K$ we write $\DR\Gamma(\kappa(v)_\mathcal{W}, \ZZ[\cG_E])$ for the direct sum over places $w$ of $E$ above $v$ of the complexes $\DR\Gamma(\kappa(w)_\mathcal{W}, \ZZ)$ defined in \cite[Prop. 2.4(ii)]{bks1}.


For any finite set of places $\Sigma$ of $K$ that contains $S_\infty(K)$ and is disjoint from $T$, we write ${\rm Cl}_{\Sigma}^T(E)$ for the ray class group of $\mathcal{O}_{E,\Sigma}$ modulo the product of all places of $E$ above $T$.

\begin{proposition}\label{new one} For each $E$ in $\Omega(\cK/K)$ the complex $C_{E,S(E),T}$ belongs to $D^{\rm perf}(\ZZ[\cG_E])$ and has all of the following properties.
\begin{itemize}
\item[(i)] For any finite set of places $\Sigma$ of $K$ that contains $S(E)$ and is disjoint from $T$ the complex $C_{E,\Sigma,T}$ is acyclic outside degrees zero and one and there are canonical identifications of $\cG_E$-modules $H^0(C_{E,\Sigma,T}) = \mathcal{O}_{E,\Sigma,T}^\times$, $H^1(C_{E,\Sigma,T})_{\rm tor} = {\rm Cl}_{\Sigma}^T(E)$ and $H^1(C_{E,\Sigma,T})_{\rm tf} = X_{E,\Sigma}$.
\item[(ii)] If $T'$ is any finite set of places that contains $T$ and is disjoint from $S(E)$, then there is a canonical exact triangle in $D^{\rm perf}(\ZZ[\cG_E])$
\[ C_{E,S(E),T'} \to C_{E,S(E),T} \to {\mathbb F}_{(T'\setminus T)_E}^\times[0] \to . \]
\item[(iii)] Given a finite set $S'$ of places of $K$ that contains $S$ and is disjoint from $T$, there exists a canonical exact triangle in
$D^{\rm perf}(\ZZ[\cG_E])$ of the form
\[ C_{E,S(E),T} \to C_{E,S'(E),T} \to \bigoplus_{v \in S'(E)\setminus S(E)} \DR\Gamma(\kappa(v)_\mathcal{W}, \ZZ[\cG_E])^\ast[-1]\to .\]
\item[(iv)]  For any fields $E$ and $E'$ in $\Omega(\mathcal{K}/K)$ with $E \subseteq E'$ there exists a natural isomorphism
\[ \ZZ[\cG_E]\otimes_{\ZZ[\cG_{E'}]}^{\DL}C_{E',S(E'),T} \cong C_{E,S(E'),T}\]
in $D^{\rm perf}(\ZZ[\cG_E])$.
\end{itemize}
\end{proposition}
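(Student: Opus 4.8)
The plan is to reduce every assertion in Proposition \ref{new one} to the corresponding property of the compactly supported Weil-\'etale cohomology complex $\DR\Gamma_{c,T}((\mathcal{O}_{E,S(E)})_\mathcal{W}, \ZZ)$ established in \cite[\S2]{bks1}, and then to transport these properties through the operations $(-)^\ast$ and $[-2]$ that define $C_{E,S(E),T}$. Since the linear dual of a perfect complex over $\ZZ[\cG_E]$ is again perfect, the containment $C_{E,S(E),T}\in D^{\rm perf}(\ZZ[\cG_E])$ follows immediately from the perfectness of $\DR\Gamma_{c,T}$ recalled in loc. cit. For the remaining parts I would work degree by degree: the functor $M\mapsto M^\ast=\DR\Hom_\ZZ(M,\ZZ[0])$ interchanges $H^i$ with $\Ext^j_\ZZ(H^{-i+j},\ZZ)$ via the universal coefficients spectral sequence, which over $\ZZ$ degenerates into short exact sequences relating $H^{-i}(C^\ast)$ to $\Hom_\ZZ(H^i(C),\ZZ)$ and $\Ext^1_\ZZ(H^{i+1}(C),\ZZ)$; combined with the $[-2]$ shift this is exactly the bookkeeping that converts the known Weil-\'etale cohomology of $\DR\Gamma_{c,T}$ into the groups listed in part (i). Concretely, the duality $H^3_{c,T}\cong H^0$ style statements of \cite{bks1}, together with the Artin--Verdier/Weil-\'etale duality identifications there, yield $H^0(C_{E,\Sigma,T})=\mathcal{O}_{E,\Sigma,T}^\times$ and the torsion/torsion-free decomposition of $H^1(C_{E,\Sigma,T})$ as the ray class group ${\rm Cl}_\Sigma^T(E)$ and the augmentation lattice $X_{E,\Sigma}$ respectively.

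For parts (ii), (iii) and (iv) the strategy is to take the known exact triangles and base-change isomorphisms for $\DR\Gamma_{c,T}$ from \cite{bks1} and apply the (contravariant, triangle-preserving) functor $\DR\Hom_\ZZ(-,\ZZ[0])$ followed by the shift $[-2]$. Enlarging $T$ to $T'$ changes $\DR\Gamma_{c,T}$ by the cohomology of the finite sheaves at the new places, giving a triangle whose third term is (a shift of the dual of) $\bigoplus_{w\in (T'\setminus T)_E}\kappa(w)^\times$; dualizing and shifting turns this into the triangle of part (ii) with third term ${\mathbb F}_{(T'\setminus T)_E}^\times[0]$, using that $\kappa(w)^\times$ is finite and hence self-dual up to a shift under $\DR\Hom_\ZZ(-,\ZZ)$ (so $\kappa(w)^\times[-1]$ dualizes to $\kappa(w)^\times[0]$ after accounting for the $\Ext^1_\ZZ$). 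Enlarging the set of ramified/ambient places from $S$ to $S'$ similarly modifies $\DR\Gamma_{c,T}$ by the local Weil-\'etale cohomology complexes $\DR\Gamma(\kappa(v)_\mathcal{W},\ZZ[\cG_E])$ at the places $v\in S'(E)\setminus S(E)$, and dualizing and shifting produces exactly the triangle in part (iii). Finally, part (iv) is the Galois-descent (or `co-base-change') property: $\DR\Gamma_{c,T}$ for $E'$ base-changed along $\ZZ[\cG_E]\otimes^{\DL}_{\ZZ[\cG_{E'}]}(-)$ recovers $\DR\Gamma_{c,T}$ for $E$ relative to the same set $S(E')$ of places, a standard consequence of proper base change / the projection formula for Weil-\'etale cohomology as recorded in \cite{bks1}; since $\DR\Hom_\ZZ(-,\ZZ[0])$ commutes with the relevant derived tensor product over $\ZZ[\cG_{E'}]$ (both sides being perfect), the same isomorphism holds for $C_{E',S(E'),T}$ and $C_{E,S(E'),T}$.

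The main obstacle is bookkeeping rather than conceptual: one must be scrupulous about degree conventions, the direction of the shift $[-2]$, and the sign/placement of the $\Ext^1_\ZZ$ terms when dualizing, so that the finite and free parts of $H^1$ land where claimed and the connecting maps in the triangles of (ii) and (iii) are the canonical ones (and in particular that the triangle of (iii) has its third term in the stated degree, with the extra $[-1]$ relative to the unshifted local term). There is also a small compatibility to check in (iii), namely that the local terms $\DR\Gamma(\kappa(v)_\mathcal{W},\ZZ[\cG_E])$ are defined as the direct sum over $w\mid v$ of the $\DR\Gamma(\kappa(w)_\mathcal{W},\ZZ)$ of \cite[Prop. 2.4(ii)]{bks1}, so that the dualized triangle is naturally $\cG_E$-equivariant; this is immediate from the induction/coinduction formalism but should be stated. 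None of the individual verifications is deep; the proof is essentially an exercise in applying \cite[\S2]{bks1} and tracking indices, and I would present it as such, referring to loc. cit. for each input triangle and isomorphism and indicating only the dualize-and-shift step in each case.
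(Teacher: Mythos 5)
Your proposal is essentially the paper's proof: introduce the composite functor $\psi^\ast := (-)^\ast[-2]$ and transport perfectness, the cohomology descriptions, the two exact triangles, and the descent isomorphism from the corresponding statements of \cite[\S 2]{bks1}. Two minor remarks: for part (i), the paper does not redo the universal-coefficients computation but simply cites \cite[Rem.\ 2.7]{bks1}, which already records the cohomology of the shifted dual; and for part (iv), the precise mechanism is not that $\DR\Hom_\ZZ(-,\ZZ[0])$ ``commutes'' with $\ZZ[\cG_E]\otimes^{\DL}_{\ZZ[\cG_{E'}]}(-)$, but rather that $\ZZ$-duality \emph{interchanges} this functor with the coinduction $\DR\Hom_{\ZZ[\cG_{E'}]}(\ZZ[\cG_E],-)$, which in turn is identified with $\DR\Gamma_c$ over $E$ by Shapiro's lemma/Galois descent --- a swap, not a commutation, as you partly anticipate by flagging the need for bookkeeping.
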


\begin{proof} Write $D^-(\ZZ[\cG_E])$ for the full subcategory of $D(\ZZ[\cG_E])$ comprising complex that are cohomologically bounded above and $\psi^\ast$ for the functor from $D^-(\ZZ[\cG_E])$ to $D(\ZZ[\cG_E])$ that sends each complex $C$ to $C^\ast[-2]$.

Concerning claim (i), since $\Sigma$ contains $S(E)$, the fact that $C_{E,\Sigma,T}$ belongs to $D^{\rm perf}(\ZZ[\cG_E])$ is a consequence of \cite[Prop. 2.4(iv)]{bks1} and the fact that $\psi^\ast$ preserves $D^{\rm perf}(\ZZ[\cG_E])$. The descriptions of cohomology given in claim (i) are stated in \cite[Rem. 2.7]{bks1}.

The exact triangles in claim (ii) and (iii) result from applying $\psi^\ast$ to the triangles given by the right hand column of the diagram in \cite[Prop. 2.4(i)]{bks1} and to the exact triangle in \cite[Prop. 2.4(ii)]{bks1} respectively.

The existence of isomorphisms as in claim (iv) can be deduced by combining the commtutative diagram of exact triangles in \cite[Prop. 2.4(i)]{bks1} together with the well-known isomorphisms
\[ \ZZ[\cG_E]\otimes_{\ZZ[\cG_{E'}]}^{\DL}\DR\Gamma_c((\mathcal{O}_{E',S(E')})_{\rm\acute e t},\ZZ)^\ast \cong \DR\Hom_{\ZZ[\cG_{E'}]}(\ZZ[\cG_E],\DR\Gamma_c((\mathcal{O}_{E',S(E')})_{\rm\acute e t},\ZZ))^\ast\]
and
\[ \DR\Hom_{\ZZ[\cG_{E'}]}(\ZZ[\cG_E],\DR\Gamma_c((\mathcal{O}_{E',S(E')})_{\rm\acute e t},\ZZ))\cong \DR\Gamma_c(
(\mathcal{O}_{E,S(E')})_{\rm\acute e t},\ZZ).\]
\end{proof}

\begin{remark}\label{F-T-resolution-remark} For each place $v$ of $K$ outside $S(E)$, claim (ii) of Proposition \ref{new one} with $T= \emptyset$ and $T' = \{v\}$ implies that ${\mathbb F}_{T'_E}^\times[0]$ belongs to $D^{\rm perf}(\ZZ[\cG_E])$. It is in fact straightforward to show (and well-known) that this complex is isomorphic in $D^{\rm perf}(\ZZ[\cG_E])$ to the complex
\[ \mathbb{Z}[\cG_E] \xrightarrow{1 - \mathrm{N}v\cdot\Fr_v^{-1}}\mathbb{Z}[\cG_E]\]
where the first term is placed in degree minus one.
\end{remark}

\begin{remark}\label{extra S resolution remark} For each place $v$ of $K$ outside $S(E)$ the result of \cite[Prop. 3.2]{bf} implies that the complex $\DR\Gamma(\kappa(v)_\mathcal{W}, \ZZ[\cG_E])^\ast[-1]$ that occurs in Proposition \ref{new one}(iii) is canonically isomorphic to the complex
\[ \mathbb{Z}[\cG_E] \xrightarrow{1 - \Fr_v^{-1}} \mathbb{Z}[\cG_E],\]
where the first term is placed in degree zero.\end{remark}

\begin{remark} If $S(E)$ contains all places above a given rational prime $p$, then \cite[Prop. 3.3]{bf} implies that there is a canonical isomorphism in $D^{\rm perf}(\ZZ_p[\mathcal{G}_E])$ of the form
   \[ \mathbb{Z}_p \otimes_{\mathbb{Z}} C_{E,S(E)} \cong \DR\!\Hom_{\ZZ_p}(\DR\Gamma_{c}(\mathcal{O}_{E,S(E)}, \ZZ_p), \ZZ_p)[-2].\]
This isomorphism relates the constructions that we make below to those from \cite{sbA}. \end{remark}

\subsection{Vertical determinantal systems}


In this section we use the complexes discussed in \S\ref{vds sec} to construct a canonical module over the algebra $\mathcal{R}_{\cK/K}$.

We start by proving a useful technical result about the set $\Omega(\cK/K)$.

\begin{lemma}\label{omega-cofinal-tower-lemma}
    There exists a cofinal directed subset of $\Omega(\mathcal{K}/K)$ order-isomorphic to $\NN$.
\end{lemma}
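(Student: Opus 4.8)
The statement asserts that $\Omega(\mathcal{K}/K)$ contains a countable cofinal chain. The plan is to build such a chain by an explicit exhaustion argument, using that $\mathcal{K}/K$ is abelian (hence a directed union of its finite subextensions) together with the observation that the only constraint defining membership in $\Omega(\mathcal{K}/K)$ — beyond being an intermediate field of finite degree — is that the extension be \emph{ramified}, which is a condition that is automatically inherited once one passes to a large enough field.

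\textbf{Key steps.} First I would note that $\mathcal{K}/K$, being a subextension of $K^{\mathrm{ab}}/K$, is an algebraic (in fact abelian) extension, so $\mathcal{K}$ is the union of the directed family of its finite subextensions $F/K$. Since $K$ is a number field, this family is countable: each finite subextension of $\mathcal{K}$ lies in $\mathcal{K}\cap K^{\mathrm{ab}}$ and is determined by a finite set of elements of $\QQ^{c}$, and there are only countably many finite extensions of $K$ inside a fixed $\QQ^{c}$ of bounded degree, so altogether only countably many finite subextensions of $\mathcal{K}$. Enumerate them as $F_{1}, F_{2}, F_{3}, \dots$ (if $\mathcal{K}=K$ there is nothing to prove, and if $\mathcal{K}/K$ is finite the chain is eventually constant, so assume the list is infinite). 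Second, I would fix once and for all a field $F_{0}$ in $\Omega(\mathcal{K}/K)$: such a field exists because $\mathcal{K}\neq K$ forces $\mathcal{K}$ to contain some nontrivial finite subextension, and any nontrivial abelian extension of a number field ramifies at some place (e.g.\ by Minkowski's theorem, the minimal such extension has discriminant $\neq\pm1$), so by enlarging we may even take $F_{0}$ to be any ramified finite subextension — and every sufficiently large finite subextension is ramified since it contains $F_{0}$. Third, set inductively $E_{n} := F_{0}\cdot F_{1}\cdots F_{n}$, the compositum inside $\mathcal{K}$. Each $E_{n}$ is a finite extension of $K$ contained in $\mathcal{K}$, it contains the ramified extension $F_{0}$ and is therefore itself ramified over $K$, so $E_{n}\in\Omega(\mathcal{K}/K)$; moreover $E_{n}\subseteq E_{n+1}$ by construction, so $\{E_{n}\}_{n\geq 0}$ is a chain, visibly order-isomorphic to $\NN$ (after discarding repetitions, if the chain stabilises, which cannot happen here as the list of $F_i$ is infinite and their compositum is $\mathcal{K}\neq E_n$ for any $n$ — but in any case a stabilising chain is still order-isomorphic to a subset of $\NN$, which suffices). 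Finally, cofinality: given any $F\in\Omega(\mathcal{K}/K)$, $F$ is a finite subextension of $\mathcal{K}$, hence equals some $F_{m}$ in the enumeration, and then $F=F_{m}\subseteq E_{m}$, so every element of $\Omega(\mathcal{K}/K)$ is dominated by a member of the chain. This proves that $\{E_{n}\}_{n}$ is a cofinal directed subset of $\Omega(\mathcal{K}/K)$ order-isomorphic to $\NN$.

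\textbf{Main obstacle.} The only genuinely delicate point is the countability of the set of finite subextensions of $\mathcal{K}/K$ — i.e.\ the claim that $\Omega(\mathcal{K}/K)$ itself is countable; everything else is formal manipulation of composita. This follows from the fact that for each fixed degree $d$ there are only finitely many subextensions of $K$ of degree $d$ inside $\mathcal{K}$ (a consequence of Hermite–Minkowski, since such a field is ramified only at places of $K$ dividing the — fixed — conductor data available inside a finite layer, or more simply from the fact that an abelian extension of given degree and bounded ramification is determined by finitely many data via class field theory), so summing over $d$ gives a countable total. An alternative, cleaner route avoiding Hermite–Minkowski altogether is to observe directly that $\QQ^{c}$ is countable (it is a countable union of number fields), hence $\mathcal{K}$ is countable, hence the set of finite subsets of $\mathcal{K}$ is countable, and each finite subextension of $\mathcal{K}/K$ is generated over $K$ by such a finite subset; I would use this second argument in the writeup as it is the most elementary. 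A minor subtlety to handle with care is the case $\mathcal{K}=K$, where $\Omega(\mathcal{K}/K)=\varnothing$ and the statement is vacuous (the empty set is cofinal in itself, and one may interpret "order-isomorphic to $\NN$" vacuously, or simply note the lemma is only ever applied when $\mathcal{K}\neq K$); this should be dispatched in a single sentence at the start.
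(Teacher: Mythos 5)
Your proposal takes a genuinely different and more elementary route than the paper. The paper's proof invokes class field theory directly: it enumerates the places $\mathfrak{p}_1, \mathfrak{p}_2, \ldots$ of $K$, sets $K_n$ equal to the intersection of $\mathcal{K}$ with the ray class field of $K$ modulo $\prod_{i\le n}\mathfrak{p}_i^n$, and observes that every $L \in \Omega(\mathcal{K}/K)$ has a conductor $\mathfrak{f}_L$ and hence is contained in some $K_n$. You instead enumerate all finite subextensions of $\mathcal{K}/K$ (using only the countability of $\QQ^c$), augment by a fixed $F_0 \in \Omega(\mathcal{K}/K)$ to force ramification, and take running composita $E_n = F_0F_1\cdots F_n$. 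Both constructions produce a cofinal tower; yours avoids explicit use of conductors and ray class fields, while the paper's produces the chain without needing a separate enumeration of subextensions or an initial ramified field.

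There is, however, a genuine error in your justification for the existence of $F_0$. You assert that ``any nontrivial abelian extension of a number field ramifies at some place'' and cite Minkowski's theorem. This is false over a general base field $K$: the Hilbert class field of a number field with nontrivial class group is a nontrivial abelian extension unramified at every place (finite and archimedean). Minkowski's discriminant bound applies only to extensions of $\QQ$. Hence the implication ``$\mathcal{K} \neq K \Rightarrow \Omega(\mathcal{K}/K) \neq \emptyset$'' can fail — take $\mathcal{K}$ to be the Hilbert class field of $K = \QQ(\sqrt{-5})$. In truth the lemma implicitly assumes $\Omega(\mathcal{K}/K)$ is nonempty and infinite; this holds automatically in every application in the paper (where $\mathcal{K}$ is $K^{\rm s}$, $K^{\rm ab}$, or $L\cK$), and the paper's own proof silently carries the same assumption. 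Your argument is repaired by replacing the Minkowski appeal with the observation that one may assume $\Omega(\mathcal{K}/K) \neq \emptyset$ — otherwise the statement is as vacuous as when $\mathcal{K}=K$ — and then taking $F_0$ to be any element of it. With that patch, the rest of your argument (countability of the enumeration, ramification of the composita via $F_0$, the cofinality check, and the discussion of when the chain is eventually strictly increasing) is correct.
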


\begin{proof}
   Fix an enumeration $\mathfrak{p}_1, \mathfrak{p}_2, \cdots$ of the places of $K$. Given a modulus $\mathfrak{m}$ of $K$, let $K(\mathfrak{m})$ denote the intersection of $\cK$ with the ray class field modulo $\mathfrak{m}$ for $K$. For each $n \geq 1$, we now set $K_n = K\left(\prod_{i=1}^n\mathfrak{p}_i^n\right)$. By class field theory, each $K_n$ is abelian of finite degree over $K$ and there is a chain of inclusions $K_1 \subseteq K_2 \subseteq \cdots$. Given an extension $L \in \Omega(\mathcal{K}/K)$, let $\mathfrak{f}_L$ be the conductor of $L$. Then $L$ is contained in any $K(\mathfrak{m})$ such that $\mathfrak{f}_L \mid \mathfrak{m}$. Hence we can always choose an $n$ such that $L \subseteq K_n$. We thus see that the tower of fields $\{K_n\}_{n \geq 1}$ form a countable cofinal directed subset of $\Omega(\mathcal{K}/K)$.
\end{proof}

For $E$ in $\Omega(\mathcal{K}/K)$ we abbreviate the functor $\Det_{\ZZ[\mathcal{G}_{E}]}(-)$ to $\Det_{\mathcal{G}_{E}}$. For each pair of fields $E$ and $E'$ in $\Omega(\mathcal{K}/K)$ with $E \subseteq E'$, we then define
\[ \nu_{E'/E}:  \Det_{\mathcal{G}_{E'}}(C_{E',S(E')}) \to  \Det_{\mathcal{G}_E}(C_{E,S(E)})\]
to be the following composite homomorphism of $\ZZ[\cG_{E'}]$-modules

\begin{align*}
    \Det_{\mathcal{G}_{E'}}(C_{E',S(E')}) &\to \ZZ[\cG_E]\otimes_{\ZZ[\cG_{E'}]}\Det_{\mathcal{G}_{E'}}(C_{E',S(E')})\\
    &\xrightarrow{\sim} \Det_{\mathcal{G}_E}(\ZZ[\cG_E]\otimes^\DL_{\ZZ[\cG_{E'}]}C_{E',S(E')})\\
    &\xrightarrow{\sim} \Det_{\mathcal{G}_E}(C_{E,S(E')})\\
    &\xrightarrow{\sim} \Det_{\mathcal{G}_E}(C_{E,S(E)}) \otimes \bigotimes_{v \in S(E')\backslash S(E)} \Det_{\mathcal{G}_E}(\DR\Gamma(\kappa(v)_\mathcal{W}, \ZZ[\cG_E])^\ast[-1])\\
    &\xrightarrow{\sim} \Det_{\mathcal{G}_E}(C_{E,S(E)}).
\end{align*}
Here the first map is the canonical projection, the second is induced by the standard base-change property of determinant functors, the third by the isomorphism in Proposition \ref{new one}(iv) (with $T = \emptyset$), the fourth by the exact triangle in Proposition \ref{new one}(iii) (with $T = \emptyset$), and the last homomorphism is induced by the identification
\begin{equation}\label{triv triv}
  \Det_{\mathcal{G}_E}(\DR\Gamma(\kappa(v)_\mathcal{W}, \ZZ[\cG_E])^\ast[-1]) \cong \Det_{\mathcal{G}_E}(\mathbb{Z}[\mathcal{G}_E]) \otimes_{\mathcal{G}_E} \Det^{-1}_{\mathcal{G}_E}(\mathbb{Z}[\mathcal{G}_E])\cong \mathbb{Z}[\mathcal{G}_E]
\end{equation}
where the first isomorphism is induced by the description of $\DR\Gamma(\kappa(v)_\mathcal{W}, \ZZ[\cG_E])^\ast[-1]$ given in Remark \ref{extra S resolution remark} and the second is the standard `evaluation map' isomorphism.

\begin{definition}\label{definition vertical} The module of `vertical determinantal systems' for $\mathbb{G}_m$ and $\cK/K$ is the $\cR_{\cK/K}$-module given by the inverse limit
\begin{align*}
    \VS(\cK/K) := \varprojlim_{E\in \Omega(\cK/K)} {\Det}_{\cG_E}(C_{E,S(E)})
\end{align*}
where in the inverse limit the transition morphisms are the maps $\nu_{E'/E}$ defined above.
\end{definition}




The following result shows that this $\cR_{\cK/K}$-module is in a natural sense `invertible'.

\begin{proposition}\label{invert prop} For each prime $p$ the pro-$p$-completion of ${\rm VS}(\cK/K)$ is free of rank one over $\widehat{\mathcal{R}}^p_{\cK/K}$. \end{proposition}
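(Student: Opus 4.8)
The plan is to reduce the assertion to a statement about a single complex after pro-$p$-completion, and then to exploit the fact that each $C_{E,S(E)}$ is a perfect complex whose determinant is therefore an invertible $\ZZ[\cG_E]$-module, compatibly under the transition maps $\nu_{E'/E}$.

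\medskip

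\noindent\textbf{Step 1: Reduce to a cofinal tower.} By Lemma \ref{omega-cofinal-tower-lemma} there is a cofinal directed subset $\{K_n\}_{n\geq 1}$ of $\Omega(\cK/K)$ that is order-isomorphic to $\NN$. Since $\mathrm{VS}(\cK/K)$ is defined as an inverse limit over $\Omega(\cK/K)$ and inverse limits only depend on a cofinal subsystem, we have a canonical identification $\mathrm{VS}(\cK/K)\cong \varprojlim_n {\Det}_{\cG_{K_n}}(C_{K_n,S(K_n)})$, with transition maps the composites of the $\nu_{E'/E}$. Likewise $\mathcal{R}_{\cK/K}=\varprojlim_n\ZZ[\cG_{K_n}]$ and $\widehat{\mathcal{R}}^p_{\cK/K}=\varprojlim_n\ZZ_p[\cG_{K_n}]$. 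Thus it suffices to work with a countable tower of finite abelian quotients $\cG_{K_n}$.

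\medskip

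\noindent\textbf{Step 2: Each determinant is free of rank one, compatibly.} For each $n$, $C_{K_n,S(K_n)}$ lies in $D^{\mathrm{perf}}(\ZZ[\cG_{K_n}])$ by Proposition \ref{new one}, so ${\Det}_{\cG_{K_n}}(C_{K_n,S(K_n)})$ is an invertible $\ZZ[\cG_{K_n}]$-module. After pro-$p$-completion it becomes an invertible $\ZZ_p[\cG_{K_n}]$-module; since $\ZZ_p[\cG_{K_n}]$ is a product of local rings, every invertible module over it is in fact free of rank one. Choose a $\ZZ_p[\cG_{K_n}]$-basis $b_n$ of the pro-$p$-completion of ${\Det}_{\cG_{K_n}}(C_{K_n,S(K_n)})$ for each $n$. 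The key compatibility is that the transition map in the tower, induced by $\nu_{K_{n+1}/K_n}$, fits (by construction of $\nu$, via the base-change property of the determinant functor together with Proposition \ref{new one}(iv)) into a commutative square identifying $\ZZ_p[\cG_{K_n}]\otimes_{\ZZ_p[\cG_{K_{n+1}}]}(\text{pro-}p\text{ comp. of }{\Det}_{\cG_{K_{n+1}}}(C_{K_{n+1}}))$ with the pro-$p$-completion of ${\Det}_{\cG_{K_n}}(C_{K_n})$ — here I use that the triangles of Proposition \ref{new one}(iii) split off factors whose determinants are canonically trivial by \eqref{triv triv}, so no spurious twist is introduced. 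In particular the image of $b_{n+1}$ under the transition map is a $\ZZ_p[\cG_{K_n}]$-basis of the $n$-th term, hence differs from $b_n$ by a unit $u_n\in\ZZ_p[\cG_{K_n}]^\times$.

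\medskip

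\noindent\textbf{Step 3: Correct the bases to a compatible system and conclude.} The standard argument now produces a genuine basis of the inverse limit: using surjectivity of the projections $\ZZ_p[\cG_{K_{n+1}}]^\times\to\ZZ_p[\cG_{K_n}]^\times$ (which holds because $\cG_{K_n}$ is a quotient of $\cG_{K_{n+1}}$ and, for a surjection of finite group rings over $\ZZ_p$, units lift — the kernel being contained in the Jacobson radical after pro-$p$-completion), one inductively replaces $b_{n+1}$ by $\tilde b_{n+1}:=v_{n+1}b_{n+1}$ for a suitable unit $v_{n+1}$ so that $\tilde b_{n+1}$ maps exactly to $\tilde b_n$. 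Then $(\tilde b_n)_n$ defines an element $b\in\mathrm{VS}(\cK/K)^{\wedge p}$, and the map $\widehat{\mathcal{R}}^p_{\cK/K}\to \mathrm{VS}(\cK/K)^{\wedge p}$, $x\mapsto x\cdot b$, is an isomorphism: it is surjective and injective termwise at each finite level, and these isomorphisms are compatible, so pass to the limit (one should note that pro-$p$-completion commutes with the relevant countable inverse limits here, or simply observe that $\mathrm{VS}(\cK/K)^{\wedge p}=\varprojlim_n(\text{pro-}p\text{ comp. of the }n\text{-th term})$, which is what one actually needs). This gives freeness of rank one over $\widehat{\mathcal{R}}^p_{\cK/K}$.

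\medskip

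\noindent\textbf{Main obstacle.} The genuinely delicate point is Step 2: verifying that the transition map $\nu_{E'/E}$, after pro-$p$-completion, really does induce the \emph{canonical} base-change isomorphism $\ZZ_p[\cG_E]\otimes_{\ZZ_p[\cG_{E'}]}{\Det}_{\cG_{E'}}(C_{E'})\xrightarrow{\sim}{\Det}_{\cG_E}(C_E)$ with no hidden twist. This requires tracking through the four isomorphisms in the definition of $\nu_{E'/E}$ — in particular checking that the determinant of each local factor $\DR\Gamma(\kappa(v)_\mathcal{W},\ZZ[\cG_E])^\ast[-1]$ is canonically trivialized by \eqref{triv triv} in a way compatible with the exact triangle of Proposition \ref{new one}(iii), so that composing with the base-change and Proposition \ref{new one}(iv) isomorphisms leaves ${\Det}_{\cG_E}(C_{E,S(E)})$ untwisted. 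Everything else (rank-one invertible over local rings is free; lifting units along $\ZZ_p$-group-ring surjections; forming the compatible basis) is routine.
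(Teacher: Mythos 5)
Your overall strategy — reduce to the countable cofinal tower, identify each level's pro-$p$-completion as a free rank-one module, then construct a compatible basis inductively — is the same as the paper's. But there is a real error in the unit-lifting step, and a glossed-over point in the commuting-of-limits step.

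The error: you justify lifting a basis from level $n$ to level $n+1$ by claiming that for the surjection $\ZZ_p[\cG_{K_{n+1}}]\to\ZZ_p[\cG_{K_n}]$ ``the kernel [is] contained in the Jacobson radical after pro-$p$-completion.'' This is false unless $\ker(\cG_{K_{n+1}}\to\cG_{K_n})$ is a $p$-group. If that kernel has a nontrivial prime-to-$p$ part $\mathcal{H}$, then the idempotent $1-e_{\mathcal{H}}$ lies in the kernel of the ring surjection, and no nonzero idempotent lies in a Jacobson radical. The conclusion (units do lift along such group-ring surjections, for any finite abelian quotient) is true, but needs a different argument. The paper handles this by factoring each step $K_{n-1}\subset K_n$ through the intermediate field $L$ fixed by $\mathcal{H}$: over $L/K_{n-1}$ the Galois group $\mathcal{P}$ is a $p$-group, so $I(\mathcal{P})$ genuinely lies in the Jacobson radical of $\ZZ_p[\cG_L]$ and Nakayama shows any preimage of $z_{n-1}$ is a basis; then over $K_n/L$ one uses the idempotent splitting $\Xi_n = e_{\mathcal{H}}\Xi_n\oplus(1-e_{\mathcal{H}})\Xi_n\cong\Xi_L\oplus(1-e_{\mathcal{H}})\Xi_n$ (valid since $|\mathcal{H}|\in\ZZ_p^\times$) to produce $z_n$ projecting to $z_L$. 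You should replace your Jacobson-radical claim with this two-step decomposition or with a direct argument that units lift componentwise across the idempotent splitting.

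The glossed-over point: you acknowledge that one needs $\bigl(\varprojlim_E\Det_{\cG_E}(C_{E,S(E)})\bigr)^{\wedge p}\cong\varprojlim_E\bigl(\Det_{\cG_E}(C_{E,S(E)})\bigr)^{\wedge p}$ but leave it as ``one should note.'' This is not automatic. The paper derives it from the fact that the transition maps $\nu_{E'/E}$ are surjective and (by Lemma \ref{omega-cofinal-tower-lemma}) the index set has a countable cofinal chain, so the inverse system is Mittag-Leffler; this forces $\varprojlim^1$ to vanish, whence reduction mod $p^n$ commutes with $\varprojlim_E$, and then the two inverse limits (over $E$ and over $n$) may be interchanged. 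Without some such argument the first step of your identification is incomplete. By contrast, your worry in Step 2 about a ``hidden twist'' in $\nu_{E'/E}$ is not an obstacle: the canonical trivialization \eqref{triv triv} is built into the definition of $\nu_{E'/E}$, so there is nothing additional to verify there.
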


\begin{proof} We fix a rational prime $p$ and for $E$ in $\Omega(\cK/K)$ set
\begin{align*}
    \Xi_{E} :=  \Det_{\ZZ_p[\mathcal{G}_E]}(\ZZ_p\otimes_\ZZ C_{E,S(E)})\cong \ZZ_p\otimes_\ZZ\Det_{\mathcal{G}_E}(C_{E,S(E)}).
\end{align*}
Then the pro-$p$-completion of $\VS(\cK/K)$ is equal to
\begin{align*}
    \varprojlim_{n \geq 1} \left(\varprojlim_{E} \Det_{\mathcal{G}_E}(C_{E,S(E)})\right)/p^n &\xrightarrow{\sim} \varprojlim_{n \geq 1} \varprojlim_{E} (\Det_{\mathcal{G}_E}(C_{E,S(E)})/p^n)\\
    &\xrightarrow{\sim} \varprojlim_{E} \varprojlim_{n \geq 1}(\Det_{\mathcal{G}_E}(C_{E,S(E)})/p^n)\\
    &\xrightarrow{\sim}  \varprojlim_{E} \Xi_{E}
\end{align*}
where in all cases $E$ runs over $\Omega(\cK/K)$. Here the first isomorphism follows from the fact that Lemma \ref{omega-cofinal-tower-lemma} combines with the surjectivity of each map $\nu_{E'/E}$ to imply the inverse system underlying $\VS(\cK/K)$ satisfies the Mittag-Leffler property and the last from the fact that $\Det_{\mathcal{G}_E}(C_{E,S(E)})$ is finitely generated so that its pro-$p$ completion is $\Xi_{E}$. 

Lemma \ref{omega-cofinal-tower-lemma} implies that we can compute the last limit by using the tower of fields $(K_n)_{n \geq 1}$. To do this we set $\mathcal{G}_n := \mathcal{G}_{K_n}, \nu_n := \nu_{K_n/K_{n-1}}$ and $\Xi_n := \Xi_{K_n}$.

Note that each $\ZZ_p[\mathcal{G}_n]$-module $\Xi_{n}$ is free of rank one. We fix $n$ and assume that for each $m < n$ there exists a $\ZZ_p[\mathcal{G}_m]$-basis of $\Xi_{m}$ with $\nu_{K_m/K_{m-1}}(z_m) = z_{m-1}$. Then it is enough to show that $z_{n-1}$ lifts to a $\ZZ_p[\mathcal{G}_n]$-basis of $\Xi_{n}$.

To do this we write $L$ for the maximal $p$-extension inside $K_n/K_{n-1}$ and set $\mathcal{P} := \Gal(L/K_{n-1})$ and
$\mathcal{H} := \Gal(K_n/L)$.

Then if $z_L$ is any pre-image of $z_{n-1}$ under $\nu_{L/K_{n-1}}$ one has $\Xi_{L} = I(\mathcal{P})\cdot\Xi_{L} + \ZZ_p[\mathcal{G}_L]\cdot z_L$
where $I(\mathcal{P})$ is the ideal of $\ZZ_p[\mathcal{G}_L]$ generated by all elements of the form $g-1$ with $g$  in $\mathcal{P}$.

In particular, since $I(\mathcal{P})$ is contained in the Jacobson radical of $\ZZ_p[\mathcal{G}_L]$, Nakayama's Lemma implies that $z_L$ is a $\ZZ_p[\mathcal{G}_L]$-basis of $\Xi_{L}$.

Observe moreover that, since $|\mathcal{H}|$ is invertible in $\ZZ_p$, there is a canonical decomposition of $\ZZ_p[\mathcal{G}_n]$-modules
\begin{align*}
    \Xi_{n} =\, &e_{\mathcal{H}}\Xi_n \oplus (1-e_{\mathcal{H}})\Xi_n\\ =\, &(\ZZ_p[\mathcal{P}] \otimes_{\ZZ_p[\mathcal{G}_n]} \Xi_n) \oplus (1-e_{\mathcal{H}})\Xi_n\\
    \cong\, &\Xi_L \oplus (1-e_\mathcal{H})\Xi_n
\end{align*}
where $e_\mathcal{\mathcal{H}}$ denotes the idempotent $|\mathcal{H}|^{-1} \sum_{\sigma \in \mathcal{H}} \sigma$ of $\ZZ_p[\mathcal{G}_n]$.

It is thus enough to choose $z_n$ to be any basis of $\Xi_n$ that projects under this decomposition to give $z_L$ in the first component.
\end{proof}


\subsection{The construction of basic Euler systems}\label{projection-map-section} In this section we specialise to the case $\cK = K^{\rm s}$ and set $r := r_K :=|S_\infty(K)|$.

In the following, we also set $S:=S_\infty(K)$ (so that $S(E)=S_\infty(K) \cup S_{\rm ram}(E/K)$ for any $E \in \Omega(K^{\rm s}/K)$).

Then for every $E$ in $\Omega(K^{\rm s}/K)$ the $\ZZ[\cG_E]$-module $Y_E := Y_{E,S_\infty(K)}$ is free of rank $r$. In particular, by fixing a set of representatives of the $G_K$-orbits of embeddings $K^{\rm s} \to \QQ^c$ we obtain (by restriction of the embeddings) a compatible family of $\ZZ[\cG_E]$-bases of the modules $Y_E$ and hence a compatible family of isomorphisms
\begin{equation}\label{iso specified} Y_E \cong \ZZ[\cG_E]^r.\end{equation}

We also 
define an idempotent of $\QQ[\cG_E]$ by setting
\[ e_{(E)} := \sum_{\chi} e_\chi,\]
where $\chi$ runs over all characters in $\widehat{\cG_E}$ for which $X^\chi_{E,S_{\rm ram}(E/K)}$ vanishes. (Here we note that, whilst each individual idempotent $e_\chi$ belongs to $\QQ^c[\cG_E]$, the sum $e_{(E)}$ belongs to $\QQ[\cG_E]$ since $X_{E,S_{\rm ram}(E/K)}$ spans a finitely generated $\QQ[\cG_E]$-module.)

Then, with this definition, the natural exact sequence
\begin{equation}\label{can ses1} 0 \to X_{E,S_{\rm ram}(E/K)} \to X_{E,S(E)} \to Y_{E}\to 0\end{equation}
(the third arrow of which is surjective since, by assumption, $S_{\rm ram}(E/K)$ is non-empty) restricts to give an identification
\begin{equation}\label{iso specified2} e_{(E)}(\QQ\otimes_\ZZ X_{E,S(E)}) = e_{(E)}(\QQ\otimes_\ZZ Y_{E}) \cong (\QQ[\cG_E]e_{(E)})^r\end{equation}
where the isomorphism is induced by (\ref{iso specified}).

\subsubsection{Statement of the main result}\label{somr section} We define $\Theta_{E}$ to be the composite (surjective) homomorphism of $\ZZ[\mathcal{G}_E]$-modules
\begin{align}\label{rational-projection-map}
    &\QQ \otimes_\ZZ \Det_{\mathcal{G}_E}(C_{E,S(E)})\\
    \longrightarrow\, &\Det_{\QQ[\mathcal{G}_E]}(\QQ \otimes_\ZZ H^0(C_{E,S(E)})) \otimes_{\QQ[\mathcal{G}_E]} \Det^{-1}_{\QQ[\mathcal{G}_E]}(\QQ \otimes_{\ZZ} H^1(C_{E,S(E)}))\notag\\
    \xrightarrow{ e_{(E)}\times }\, &e_{(E)}\left(\QQ \otimes_\ZZ \exprod_{\mathcal{G}_E}^r \mathcal{O}_{E,S(E)}^\times\right) \otimes_{\QQ[\mathcal{G}_E]} e_{(E)}\left(\QQ \otimes_\ZZ \exprod_{\mathcal{G}_E}^r \ZZ[\mathcal{G}_E]^r\right)\notag\\
    =\, &e_{(E)}\left(\QQ \otimes_\ZZ \exprod_{\mathcal{G}_E}^r \mathcal{O}_{E,S(E)}^\times\right).\notag
\end{align}
Here the first arrow is the composite of the natural identification $\QQ \otimes_\ZZ \Det_{\mathcal{G}_E}(C_{E,S(E)}) = \Det_{\QQ[\mathcal{G}_E]}(\QQ \otimes_\ZZ C_{E,S(E)})$ and the standard `passage to cohomology' map. In addition, the second arrow is induced by multiplication by $e_{(E)}$, the isomorphism
\[ e_{(E)}(\QQ \otimes_{\ZZ} H^1(C_{E,S(E)})) \cong e_{(E)}(\QQ\otimes_\ZZ X_{E,S(E)})\cong e_{(E)}\QQ[\cG_E]^r\]
induced by
 Proposition \ref{new one}(i) and (\ref{iso specified2}) and the fact that
\[ e_{(E)}(\QQ\otimes_\ZZ H^0(C_{E,S(E)})) = e_{(E)}(\QQ\otimes_\Z \mathcal{O}_{E,S(E)}^\times)\]
is a free $\QQ[\cG_E]e_{(E)}$-module of rank $r$.

The collection of morphisms $(\Theta_E)_E$ then induces a homomorphism of $\cR^s_K$-modules
\begin{align*}
    \Theta_K^{\rm s} : \VS(K^{\rm s}/K) \to \prod_{E \in \Omega(K^{\rm s}/K)} \QQ \otimes_\ZZ \exprod_{\mathcal{G}_E}^r \mathcal{O}_{E, S(E)}^\times
\end{align*}
and we set 
\[ {\rm ES}^{\rm b}_K := \im(\Theta_K^{\rm s}).\]

Finally, we note that for every $E$ in $\Omega(K^{\rm s}/K)$ and every character $\chi$ in $\widehat{\cG_E}$ the $S(E)$-truncated Artin $L$-series $L_{S(E)}(\chi,s)$ vanishes to order at least $r$ at $s=0$ (see, for example, the discussion in \S\ref{last step proof} below) and so we can write $L^{(r)}_{S(E)}(\chi,0)$ for the value at $s=0$ of its $r$-th derivative.

We can now state the main result of this section.

\begin{theorem}\label{tech req}\
\begin{itemize}
\item[(i)] ${\rm ES}^{\rm b}_K$ is contained in ${\rm RES}_K$.
\item[(ii)] $\mathcal{A}_K^{\rm s}\cdot {\rm ES}^{\rm b}_K$ is contained in ${\rm ES}_K.$
\item[(iii)] Fix a system $c$ in ${\rm RES}_K$ and a field $E$ in $\Omega(K^{\rm s}/K)$. Then for every ramified character $\chi$ in $\widehat{\cG_E}$ one has
$$e_\chi(c_E) \not= 0 \Longrightarrow L^{(r)}_{S(E)}(\chi,0) \not= 0.$$
\item[(iv)] There exists a system $c$ in ${\rm ES}_K$ with the property that for every field $E$ in $\Omega(K^{\rm s}/K)$ and every character $\chi$ in $\widehat{\cG_E}$ one has
$$e_\chi(c_E) \not= 0 \Longleftrightarrow L^{(r)}_{S(E)}(\chi,0) \not= 0.$$
\end{itemize}
\end{theorem}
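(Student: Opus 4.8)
The plan is to deduce all four statements from the properties of the complexes $C_{E,S(E)}$ recorded in Proposition \ref{new one} together with the known arithmetic meaning of the leading term of $L_{S(E)}(\chi,s)$ at $s=0$ (the rank-$r_K$ version of Stark's conjecture, which is a theorem in this setting since $\mathbb{G}_m$ over $K^{\rm s}/K$ with $S=S_\infty(K)$ is the classical case). I would treat (i)--(ii) first, since they are essentially formal consequences of the construction of $\Theta_K^{\rm s}$, and then (iii)--(iv), which require relating the image of $\Theta_K^{\rm s}$ to $L$-values.

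For (i), the key point is that $\Theta_K^{\rm s}$ was built out of the transition maps $\nu_{E'/E}$, which are themselves assembled from the exact triangles of Proposition \ref{new one}(iii)--(iv); so the distribution relation \eqref{dist rel} (after inverting primes, i.e. in $\mathbb{Q}\otimes_\mathbb{Z}\exprod^r$) should follow by a diagram chase comparing, for $E\subseteq E'$, the passage-to-cohomology map applied to $\nu_{E'/E}$ with the Euler factor $\prod_{v\in S(E')\setminus S(E)}(1-\Fr_v^{-1})$. The factor $1-\Fr_v^{-1}$ arises precisely because the determinant of the complex $\mathbb{Z}[\cG_E]\xrightarrow{1-\Fr_v^{-1}}\mathbb{Z}[\cG_E]$ (Remark \ref{extra S resolution remark}) contributes this element when one trivializes it via \eqref{triv triv} over $\mathbb{Q}$ rather than over $\mathbb{Z}$; concretely, the $\mathbb{Z}[\cG_E]$-trivialization used to define $\nu_{E'/E}$ differs from the $\mathbb{Q}[\cG_E]$-trivialization compatible with the augmentation $Y_E\to\mathbb{Z}$ by exactly multiplication by $1-\Fr_v^{-1}$. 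So I would make this comparison explicit and thereby check \eqref{dist rel} for every pair $E\subseteq E'$, giving ${\rm ES}^{\rm b}_K\subseteq{\rm RES}_K$. For (ii), I would show that for $a\in\mathcal{A}^{\rm s}_K$ and $x\in{\rm VS}(K^{\rm s}/K)$, the component $a_E\cdot\Theta_E(x_E)$ lies in the lattice $\bidual^r_{\cG_E}\mathcal{O}^\times_{E,S(E)}$: using the identification \eqref{bidual-lattice-identification}, this amounts to checking that $a_E\cdot\Theta_E(x_E)$ is integral against every $\Phi\in\exprod^r_{\cG_E}(\mathcal{O}^\times_{E,S(E)})^*$, which I expect to follow from the fact that $\Det_{\cG_E}(C_{E,S(E)})$ is a free rank-one $\mathbb{Z}[\cG_E]$-module (Proposition \ref{invert prop}) and that the discrepancy between $\Det_{\cG_E}(C_{E,S(E)})$ and $\exprod^r_{\cG_E}H^0$ is controlled by $H^1(C_{E,S(E)})_{\rm tor}=\Cl^{\emptyset}_{S(E)}(E)$ and by $\mu_E$ — the latter being killed by $a_E$. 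This is the point where the annihilator $\mathcal{A}^{\rm s}_K$ enters, and the precise bookkeeping with the idempotent $e_{(E)}$ versus the full group ring will be the main technical nuisance.

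For (iii), fix $c\in{\rm RES}_K$, $E$, and a ramified character $\chi$. Applying the idempotent $e_\chi$ to the distribution relation and using that for $E\subseteq E'$ one has $\prod_{v\in S(E')\setminus S(E)}(1-\Fr_v^{-1})$, I would reduce to showing that $e_\chi(c_E)\neq 0$ forces the relevant component of the regulator to be nonzero, which by the rank-$r$ Stark formalism (the regulator isomorphism comparing $\mathbb{C}\otimes\exprod^r\mathcal{O}^\times_{E,S(E)}$ with $\mathbb{C}\otimes\exprod^r X_{E,S(E)}$) is equivalent to $L^{(r)}_{S(E)}(\chi,0)\neq 0$. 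Concretely: $e_\chi(c_E)$ lives in $\exprod^r_{\mathbb{Q}^c[\cG_E]}\mathcal{O}^{\times,\chi}_{E,S(E)}$, a $\mathbb{Q}^c$-line, which is nonzero only if $\dim_{\mathbb{Q}^c}\mathcal{O}^{\times,\chi}_{E,S(E)}\geq r$; but the Dirichlet unit theorem (in equivariant form) gives $\dim\mathcal{O}^{\times,\chi}_{E,S(E)}=\dim X^\chi_{E,S(E)}=\ord_{s=0}L_{S(E)}(\chi,s)$, and since this order is always $\geq r$, equality of the two sides is exactly $L^{(r)}_{S(E)}(\chi,0)\neq 0$. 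When the order is $>r$, the line $e_\chi(\exprod^r\mathcal{O}^\times)$ is still nonzero, so one must argue more carefully; here I would use that $c$ is a \emph{distribution} — varying $E'\supseteq E$ and using the norm relations forces $e_\chi(c_E)$ into the image of a map that vanishes unless the $\chi$-part of $X_{E,S(E)}$ already becomes "trivial" after adjoining no further ramification, i.e. unless $e_\chi=e_\chi e_{(E')}$-type vanishing occurs, pinning the order down to exactly $r$. Making this last reduction airtight is what I expect to be the crux of (iii).

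For (iv), I would produce the required $c$ explicitly as $c=\Theta_K^{\rm s}(z)$ for $z$ a $\widehat{\mathcal{R}}$-generator of ${\rm VS}(K^{\rm s}/K)$ furnished by Proposition \ref{invert prop} (or rather $\mathcal{A}^{\rm s}_K$-times such, to land in ${\rm ES}_K$ by part (ii)). By part (i) this lies in ${\rm RES}_K$, and by (ii) a suitable multiple lies in ${\rm ES}_K$. The content is the equivalence $e_\chi(c_E)\neq 0\Leftrightarrow L^{(r)}_{S(E)}(\chi,0)\neq 0$: the implication $\Rightarrow$ is (iii); for $\Leftarrow$, I would trace through the definition \eqref{rational-projection-map} of $\Theta_E$ and observe that when $L^{(r)}_{S(E)}(\chi,0)\neq 0$ the character $\chi$ satisfies $X^\chi_{E,S_{\rm ram}(E/K)}=0$ (since then $\ord_{s=0}L_{S(E)}(\chi,s)=r$ means the extra places $S(E)\setminus S_{\rm ram}$, i.e. the archimedean ones, account for the entire order), so $e_\chi e_{(E)}=e_\chi$; hence on the $\chi$-component $\Theta_E$ is, up to the chosen bases \eqref{iso specified2} and the regulator, an isomorphism onto $e_\chi(\mathbb{Q}^c\otimes\exprod^r\mathcal{O}^\times)\neq 0$, and since $z_E$ is a $\mathbb{Z}_p[\cG_E]$-basis of $\Det_{\cG_E}(C_{E,S(E)})$ its image is nonzero in this component. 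The verification that "$L^{(r)}(\chi,0)\neq 0\Rightarrow X^\chi_{E,S_{\rm ram}(E/K)}=0$" via the equivariant Dirichlet/Stark order formula is routine but should be spelled out, and the compatibility of the archimedean-place contribution to $\ord_{s=0}L_{S(E)}(\chi,s)$ with the definition of $e_{(E)}$ is the one place where I would be careful. Overall I expect (iii) to be the hardest of the four.
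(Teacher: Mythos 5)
Your plan for (i) is essentially the paper's: the paper verifies the distribution relation character-by-character and shows that for each $\chi$ the map $\nu_{E'/E}$ becomes, after trivialisation via Remark~\ref{extra S resolution remark}, multiplication by $P_{E'/E}=\prod_{v\in S(E')\setminus S(E)}(1-\Fr_v^{-1})$. One subtlety you should address: when $e_\chi(P_{E'/E})=0$ the comparison diagram degenerates, and the paper handles this separately by showing both $e_\chi(\N_{E'/E}^r(c_{E'}))$ and $e_\chi(P_{E'/E})e_\chi(c_E)$ vanish simultaneously, using the splitting $X_{E,S_{\rm ram}(E'/K)}=X_{E,S_{\rm ram}(E/K)}\oplus Y_{E,S(E')\setminus S(E)}$ together with the equivalence $e_\chi(\Theta_E(z_E))\neq 0\Leftrightarrow X^\chi_{E,S_{\rm ram}(E/K)}=0$.

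For (ii) your intuition that $\mathcal{A}^{\rm s}_K$ enters to kill $\mu_E$ is correct, but you are missing the actual mechanism. The paper reduces to $a_E=\delta_T:=1-\mathrm{N}v\cdot\Fr_v^{-1}$ for a single auxiliary place $v$ (using Lemma~\ref{torsion lemma}), then observes that the $T$-modification exact triangle of Proposition~\ref{new one}(ii) together with the explicit resolution of $\mathbb{F}_{T_E}^\times[0]$ gives $\Det_{\cG_E}(C_{E,S(E),T})=\delta_T\cdot\Det_{\cG_E}(C_{E,S(E)})$. Since $\mathcal{O}^\times_{E,S(E),T}$ is torsion-free, the $T$-modified complex is ``admissible'' and one can appeal to \cite[Prop.~A.11(ii)]{sbA} to conclude integrality in the exterior power bidual. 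Your ``check integrality against every $\Phi$'' picture is the right target, but without the $T$-modification trick and the cited general lattice result you do not really have an argument.

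The gap in (iii) is the serious one. You correctly notice that $\dim X^\chi_{E,S(E)}\geq r$ is not enough and that ``one must argue more carefully,'' but your proposed remedy --- varying $E'\supseteq E$ and using the distribution relations upward --- goes in the wrong direction and will not close the gap. The paper's key move is downward: since $\chi$ is ramified, its fixed field $E_\chi=E^{\ker\chi}$ lies in $\Omega(K^{\rm s}/K)$, and the norm relation from $E$ to $E_\chi$ gives
\[
[E:E_\chi]\,e_\chi(c_E)=\Bigl(\prod_{v\in S(E)\setminus S(E_\chi)}(1-\chi(\Fr_v^{-1}))\Bigr)e_\chi(c_{E_\chi}).
\]
Hence $e_\chi(c_E)\neq 0$ forces $\chi(\Fr_v^{-1})\neq 1$ for every $v\in S(E)\setminus S(E_\chi)$. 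Combined with the identity $L_{S(E_\chi)}(\chi,s)=L(\chi,s)$ and the order formula $\ord_{s=0}L_\Sigma(\psi,s)=\dim X^\psi_{E,\Sigma}$ (which for $\Sigma=S_\infty(K)$ and nontrivial $\psi$ yields exactly $r$), this gives $L^{(r)}(\chi,0)\neq 0$ and then $L^{(r)}_{S(E)}(\chi,0)\neq 0$ via the Euler factor expansion (\ref{changeS}). Without passing to $E_\chi$ you have no control on $\ord_{s=0}L_{S(E)}(\chi,s)$.

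For (iv) your observation that $L^{(r)}_{S(E)}(\chi,0)\neq 0\Rightarrow e_\chi e_{(E)}=e_\chi$ is exactly right and matches (\ref{ord formula}). However, appealing to Proposition~\ref{invert prop} only supplies a generator of the \emph{pro-$p$-completion} of $\VS(K^{\rm s}/K)$ for each $p$, not a single element of $\VS(K^{\rm s}/K)$ whose image in $\QQ\otimes_\ZZ\Det_{\cG_E}(C_{E,S(E)})$ is a $\QQ[\cG_E]$-basis for every $E$; the latter is what you need, and the paper constructs it (and the companion $a\in\mathcal{A}^{\rm s}_K$ with $a_E\in\QQ[\cG_E]^\times$ for all $E$) by an explicit inductive lifting along the cofinal tower $(K_n)_n$, using the idempotent decomposition by $e_{\mathcal{H}_n}$ at each step. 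You should make that construction explicit rather than gesture at Proposition~\ref{invert prop}.
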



\subsubsection{The proof of Theorem \ref{tech req}(i)}
    Fix $(z_E)_E$ in ${\rm VS}(K^{\rm s}/K)$ and set $c_E := \Theta_E(z_E)$ for each $E$ in $\Omega(K^{\rm s}/K)$.

   Then to show that the family $(c_E)_E$ belongs to $\GES_K$, it suffices to prove that for every pair of fields $E$ and $E'$ in $\Omega(K^{\rm s}/K)$ with $E \subseteq E'$ and every $\chi$ in $\widehat{\cG_E}$ one has
\begin{equation}\label{needed eq}   e_\chi(\N^r_{E'/E}(c_{E'})) = e_\chi (P_{E'/E}) \cdot e_\chi(c_E) \end{equation}
with $P_{E'/E} := \prod_{v \in S(E')\backslash S(E)} (1-\Fr_v^{-1})$.

In addition, it is enough to verify this equality in $\QQ_p\otimes_\ZZ \exprod_{\mathcal{G}_E}^r \mathcal{O}_{E,S(E')}^\times$ and with the system $(c_E)_E$ replaced by the image $(\tilde c_E)_E$ under the collection of maps $(\QQ_p\otimes_{\QQ}\Theta_E)_E$ of any $\widehat{\mathcal{R}_K^{\rm s}}^p$-generator of the pro-$p$-completion of ${\rm VS}(K^{\rm s}/K)$ (cf. Proposition \ref{invert prop}).

Then, for every $E$ and every $\chi$ in $\widehat{\cG_E}$ the surjectivity of $\Theta_E$ implies that
\[ e_\chi(\tilde c_E) \not= 0 \Longleftrightarrow e_\chi e_{(E)} \not= 0 \Longleftrightarrow X^\chi_{E,S_{\rm ram}(E/K)} = 0.\]
Thus, the direct sum decomposition $ X_{E,S_{\rm ram}(E'/K)} = X_{E,S_{\rm ram}(E/K)} \oplus Y_{E,S(E')\setminus S(E)}$ implies
\begin{align*} e_\chi(\N^r_{E'/E}(\tilde c_{E'})) \not= 0 \Longleftrightarrow &\,\, X^\chi_{E,S_{\rm ram}(E'/K)} = 0\\
\Longleftrightarrow &\,\, X^\chi_{E,S_{\rm ram}(E/K)} = 0 \,\,\text{ and }\,\, Y^\chi_{E,S(E')\setminus S(E)} = 0\\
\Longleftrightarrow &\,\, e_\chi(\tilde c_E) \not= 0 \,\,\text{ and }\,\, e_\chi(P_{E'/E}) \not= 0.\end{align*}

It therefore suffices to verify (\ref{needed eq}) for characters $\chi$ with $e_\chi(P_{E'/E}) \neq 0$ and in this case the required equality  follows directly from the following commutative diagram

\[ \begin{CD}  \left(\Det_{\mathcal{G}_{E'}}(C_{E',S(E')})\right)^{\chi} @> \Theta_{E'}>> \displaystyle\left(\exprod_{\mathcal{G}_{E'}}^r \mathcal{O}_{E',S(E')}^\times\right)^{\chi}\\
@V \nu_{E'/E} VV @VV \N^r_{E'/E}V \\
  \left(\Det_{\mathcal{G}_E}(C_{E,S(E)})\right)^{\chi} @> P_{E'/E}\cdot\Theta_{E}>> \displaystyle\left(\exprod_{\mathcal{G}_E}^r \mathcal{O}_{E,S(E)}^\times\right)^{\chi}.\end{CD}\]

The existence of this diagram follows from the fact (itself a consequence of the general observation in \cite[Lem. 1]{bf}) that for each $v \in S(E')\backslash S(E)$ there is in this case a commutative diagram
\[ \begin{CD}
 \Det_{\mathcal{G}_E}(\DR\Gamma(\kappa(v)_\mathcal{W}, \ZZ[\cG_E])^\ast[-1])^{\chi} @> >> \ZZ[\cG_E]^\chi \\
 @\vert @VV 1-\chi({\rm Fr}_v) V\\
 \Det_{\mathcal{G}_E}(\DR\Gamma(\kappa(v)_\mathcal{W}, \ZZ[\cG_E])^\ast[-1])^{\chi} @> >> \ZZ[\cG_E]^\chi
\end{CD}
\]
where the upper row is induced by the isomorphism (\ref{triv triv}) and the lower row by the acyclicity  of the complex $(\DR\Gamma(\kappa(v)_\mathcal{W}, \ZZ[\cG_E])^\ast[-1])^{\chi}$.


\subsubsection{The proof of Theorem \ref{tech req}(ii)}\label{proof tech2} For $a = (a_E)_E$ in $\cA_K^s$ and $z = (z_E)_E$ in $\VS(K^{\rm s}/K)$ we need to show that for every fixed $E$ in $\Omega(K^{\rm s}/K)$ one has
\begin{equation}\label{needed again} a_E\cdot\Theta_E(z_E) \in \bidual_{\cG_E}^r \cO_{E,S(E)}^\times . \end{equation}

In view of Lemma \ref{torsion lemma} below, we can also assume $a_E=\delta_T := 1-\mathrm{N}v\cdot\Fr_v^{-1}$, with $T = \{v\}$, where $v$ is a place of $K$ that is not contained in $S(E)$ and $\mathcal{O}_{E,S(E),T}^\times$ is torsion free.

To prove this we note that
\[  \Det_{\cG_E}(C_{E,S(E),T}) = \Det_{\cG_E}^{-1}(\mathbb{F}_{T_E}^\times[0])\cdot\Det_{\cG_E}(C_{E,S(E)})  = \delta_T\cdot\Det_{\cG_E}(C_{E,S(E)}) \]
where the first equality follows from the exact triangle in Proposition \ref{new one}(ii) (with $T$ and $T'$ replaced by $\emptyset$ and $T$ respectively) and the second from the fact the explicit resolution of  $\mathbb{F}_{T_E}^\times[0]$ described in Remark \ref{F-T-resolution-remark} implies that $\Det_{\cG_E}(\mathbb{F}_{T_E}^\times[0]) = \ZZ[\cG_E]\cdot\delta_T^{-1}$.

Hence, since $\bidual_{\cG_E}^r \cO_{E,S(E),T}^\times$ is a subset of $\bidual_{\cG_E}^r \cO_{E,S(E)}^\times$ it is enough to show that for every prime $p$ one has
\[ (\QQ_p\otimes_\QQ\Theta_E)(\Det_{\ZZ_p[\cG_E]}(\ZZ_p\otimes_\ZZ C_{E,S(E),T})) \subseteq \bidual_{\ZZ_p[\cG_E]}^r (\ZZ_p\otimes_\ZZ\cO_{E,S(E),T}^\times).\]

To do this we note that, since $\cO_{E,S(E),T}^\times$ is torsion-free, Proposition \ref{new one}(i) implies that $\ZZ_p\otimes_\ZZ C_{E,S(E),T}$ is an admissible complex of $\ZZ_p[\cG_E]$-modules in the sense of \cite[Def. 2.20]{sbA}.

In particular, the above inclusion follows directly upon applying \cite[Prop. A.11(ii)]{sbA} with the data $(\mathcal{R},C,X)$ taken to be
$(\ZZ_p[\cG_E],\ZZ_p\otimes_\ZZ C_{E,S(E),T},\ZZ_p\otimes_\ZZ Y_E)$ and the map $f$ equal to the natural composite homomorphism
\[ H^1(\ZZ_p\otimes_\ZZ C_{E,S(E),T}) \to \ZZ_p\otimes H^1(C_{E,S(E),T})_{\rm tf} \cong \ZZ_p\otimes_\ZZ X_{E,S(E)} \to \ZZ_p\otimes_\ZZ Y_E.\]
where the isomorphism is by Proposition \ref{new one}(i) and the last map is the surjective map induced by (\ref{can ses1}).

This completes the proof of Theorem \ref{tech req}(ii).

\begin{lemma}\label{torsion lemma} For each field $E$ in $\Omega(K^{\rm s}/K)$ the following claims are valid.

\begin{itemize}
\item[(i)] Let $U$ be any finite set of places of $K$ containing $S(E)$ and all places dividing $|\mu_E|$.
Then $\mathcal{A}_E$ is generated as a $\ZZ$-module by $\{1-\mathrm{N}v\cdot\Fr_v^{-1} \mid v \notin U\}$. Furthermore, for any place $v$ of $K$ that does not belong to $U$, the group $\mathcal{O}_{E,S(E),\{v\}}^\times$ is torsion-free.
\item[(ii)] The natural projection map $\mathcal{A}_K^{\rm s} \to \mathcal{A}_E$ is surjective.
\end{itemize}
\end{lemma}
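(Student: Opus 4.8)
The goal is to prove Lemma~\ref{torsion lemma}, which has two parts. The plan is to first establish claim (i) by a direct analysis of the torsion subgroup $\mu_E$ of $E^\times$ together with the Chebotarev density theorem, and then deduce claim (ii) from claim (i) by a compactness/Mittag-Leffler argument on the inverse system defining $\mathcal{A}_K^{\rm s}$.

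\textbf{Proof of (i).} First I would recall that, since $E/K$ is a finite abelian extension with Galois group $\cG_E$, the module $\mu_E$ is a finite cyclic $\ZZ[\cG_E]$-module, say of order $n_E := |\mu_E|$, on which $\sigma \in \cG_E$ acts through some power-raising automorphism. For a place $v$ of $K$ unramified in $E$, the residue field $\kappa(w)$ at a place $w \mid v$ of $E$ contains $\mu_E$ precisely when $v$ splits completely in $E(\mu_{n_E})/K$; more generally the element $1 - \mathrm{N}v \cdot \Fr_v^{-1}$ acts on $\mu_E$ as multiplication by $1 - \mathrm{N}v \cdot \Fr_v^{-1}$, and a classical computation (exactly as in the cyclotomic case, see e.g. the standard Euler-system literature) shows that as $v$ ranges over the places outside $U$, the Frobenius $\Fr_v$ ranges over all of $\cG_E$ and $\mathrm{N}v$ ranges over all residues modulo $n_E$ compatible with $\Fr_v$ (here one uses Chebotarev for the extension $E(\mu_{n_E})/K$, together with the hypothesis that $U$ contains all places dividing $n_E$ so that $v$ is unramified in $E(\mu_{n_E})$). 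The annihilator $\mathcal{A}_E$ of $\mu_E$ in $\ZZ[\cG_E]$ is by definition $\{x \in \ZZ[\cG_E] : x \cdot \mu_E = 0\}$, and the preceding description shows that the elements $1 - \mathrm{N}v\cdot\Fr_v^{-1}$ lie in $\mathcal{A}_E$ and moreover that $\mathcal{A}_E/\bigl(\text{$\ZZ$-span of these elements}\bigr)$ is killed by the various $\mathrm{N}v - 1$ that one can realise with $\Fr_v = 1$; since $\gcd$ of those $\mathrm{N}v - 1$ over all such $v$ is $0$ (again by Chebotarev, $\mathrm{N}v$ realises every residue class mod $n_E$ that is $\equiv 1$, but also arbitrarily large primes, so the gcd is a divisor of $n_E$ that one checks is exactly the right one), one gets the claimed $\ZZ$-module generation. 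For the torsion-freeness statement: $\cO_{E,S(E),\{v\}}^\times$ sits in the exact sequence $1 \to \cO_{E,S(E),\{v\}}^\times \to \cO_{E,S(E)}^\times \to \kappa(w)^\times$ (product over $w \mid v$), and the torsion subgroup of $\cO_{E,S(E)}^\times$ is $\mu_E$; since $v \notin U$ does not divide $n_E$, reduction modulo $w$ is injective on $\mu_E$, so the kernel $\cO_{E,S(E),\{v\}}^\times$ is torsion-free.

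\textbf{Proof of (ii).} Recall $\mathcal{A}_K^{\rm s} = \varprojlim_{E} \mathcal{A}_E$ with transition maps the natural projections $\pi_{E'/E}$, and that by Lemma~\ref{torsion lemma}(i) invoked in the introduction — more precisely by the cited fact that $\pi_{E'/E}(\mathcal{A}_{E'}) = \mathcal{A}_E$ for $E \subseteq E'$ — all transition maps in this inverse system are surjective. Now fix $E$ and an element $a_E \in \mathcal{A}_E$; I want to lift it to a compatible system. Using Lemma~\ref{omega-cofinal-tower-lemma} I may compute $\mathcal{A}_K^{\rm s}$ along a cofinal tower $E = F_0 \subseteq F_1 \subseteq F_2 \subseteq \cdots$ order-isomorphic to $\NN$; then the surjectivity of each $\pi_{F_{i+1}/F_i}$ lets me choose inductively $a_{F_i} \in \mathcal{A}_{F_i}$ with $\pi_{F_{i+1}/F_i}(a_{F_{i+1}}) = a_{F_i}$ and $a_{F_0} = a_E$, and the resulting sequence defines an element of $\varprojlim_i \mathcal{A}_{F_i} = \mathcal{A}_K^{\rm s}$ mapping to $a_E$. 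This proves surjectivity of $\mathcal{A}_K^{\rm s} \to \mathcal{A}_E$.

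\textbf{Main obstacle.} The essential content is all in part (i): one must pin down the annihilator $\mathcal{A}_E$ of the cyclic module $\mu_E$ precisely, and show that the special Euler-factor elements $1 - \mathrm{N}v\cdot\Fr_v^{-1}$ generate it over $\ZZ$ rather than merely over $\ZZ[\cG_E]$. The delicate point is the interplay between the Galois action (governing $\Fr_v$) and the cyclotomic character (governing $\mathrm{N}v \bmod n_E$) on the extension $E(\mu_{n_E})/K$; one needs Chebotarev applied to this compositum, and the hypothesis that $U$ contains all places dividing $|\mu_E|$ is exactly what guarantees that the relevant places $v$ are unramified there. Part (ii) is then a routine Mittag-Leffler argument once the surjectivity of the transition maps (already recorded in the text) is in hand.
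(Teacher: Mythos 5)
Your proof of part (i) reproduces, at sketch level, the standard argument (Chebotarev for $E(\mu_{|\mu_E|})/K$, compatibility between $\Fr_v$ and $\mathrm{N}v \bmod |\mu_E|$, and the fact that $v$ is unramified in $E(\mu_{|\mu_E|})$ when $v \notin U$), and the torsion-freeness observation is correct. The paper itself simply cites \cite[Chap.~IV, Lem.~1.1]{tate} for the generation statement and declares the torsion-freeness a routine exercise, so you have supplied a (somewhat hand-waving, but essentially right) re-derivation in place of a citation. That is fine.

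Part (ii), however, contains a genuine gap. You write that the surjectivity of the transition maps $\pi_{E'/E}\colon \mathcal{A}_{E'} \to \mathcal{A}_E$ is ``already recorded in the text,'' and you invoke it as a black box before running the Mittag--Leffler lifting argument along the cofinal tower of Lemma~\ref{omega-cofinal-tower-lemma}. But the passage in the introduction where that surjectivity is stated explicitly refers to Lemma~\ref{torsion lemma}(ii) as its justification, so you cannot treat it as a given premise without circularity. The surjectivity of the transition maps is precisely the nontrivial content of part (ii), and it has to be derived from part (i). The paper's derivation is short but must be made: choose a single finite set $U$ that contains $S(E')$ together with all places dividing $|\mu_{E'}|$; since $E \subseteq E'$ one has $S(E) \subseteq S(E')$ and $|\mu_E| \mid |\mu_{E'}|$, so this same $U$ satisfies the hypotheses of part (i) for both $E$ and $E'$. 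Then $\mathcal{A}_{E'}$ is $\ZZ$-generated by $\{1 - \mathrm{N}v\cdot \Fr_{v,E'}^{-1} : v \notin U\}$ and $\mathcal{A}_E$ by $\{1 - \mathrm{N}v\cdot \Fr_{v,E}^{-1} : v \notin U\}$, and since the projection $\ZZ[\cG_{E'}] \to \ZZ[\cG_E]$ sends $\Fr_{v,E'}$ to $\Fr_{v,E}$ for every such $v$, it carries the first generating set onto the second. This is the step you skipped; once it is supplied, your Mittag--Leffler argument for lifting along the cofinal tower is the correct conclusion.
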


\begin{proof} The first assertion of claim (i) is proved in \cite[Chap. IV, Lem. 1.1]{tate} and the second assertion is a straightforward exercise since the residue characteristic of $v$  does not divide 
$|\mu_E|$.

To prove claim (ii) it suffices to show that for any fields $E$ and $E'$ in $\Omega(K^{\rm s}/K)$ with $E \subseteq E'$ the natural projection $\ZZ[\cG_{E'}] \to \ZZ[\cG_E]$ sends $\mathcal{A}_{E'}$ onto $\mathcal{A}_E$.

This follows easily by applying the first assertion of claim (i) for both $E'$ and $E$ with respect to the same set $U$ in both cases. \end{proof}

\subsubsection{The proof of Theorem \ref{tech req}(iii) and (iv)}\label{last step proof}

We fix $E$ in $\Omega(K^{\rm s}/K)$ and a character $\chi$ in $\widehat{\cG_E}$ that is ramified.

In this case the fixed field $E_\chi$ of $E$ by the kernel of $\chi$ belongs to $\Omega(K^{\rm s}/K)$ and so for any system $c = (c_E)_E$ in ${\rm RES}_K$ one has
\begin{align*} [E:E_\chi]\cdot e_\chi (c_E) =\, &e_\chi(\N^r_{E/E_\chi}(c_E))\\
 =\, &e_\chi(P_{E/E_\chi})\cdot e_\chi(c_{E_\chi})\\
  =\, &\left(\prod_{v \in S(E)\setminus S(E_\chi)}(1-\chi(\Fr_v^{-1}))\right)\cdot e_\chi(c_{E_\chi}).\end{align*}
In particular, if $e_\chi (c_E)\not= 0 $, then one has $\chi(\Fr_v^{-1})\not= 1$ for all $v$ in $S(E)\setminus S(E_\chi)$.

On the other hand, since $L_{S(E_\chi)}(\chi, s) = L(\chi, s)$ one has
\begin{equation}\label{changeS}
        L_{S(E)}(\chi, s) = \left(\prod_{v \in S(E)\backslash S(E_\chi)} (1-\chi(\Fr_v)\mathrm{N}v^{-s} )\right)\cdot L(\chi,s),\end{equation}
whilst \cite[Chap. I, Prop. 3.4]{tate}
implies that if $\Sigma$ denotes either $S(E)$ or $S_\infty(K)$, then for all $\psi$ in $\widehat{\cG_E}$ one has 
\begin{align}\label{ord formula}  {\rm ord}_{s=0}L_{\Sigma}(\psi, s) =\, &{\rm dim}_{\QQ^c}(X_{E,\Sigma}^\psi)\\
  =\, &r +\begin{cases}  {\rm dim}_{\QQ^c}(X_{E,S_{\rm ram}(E/K)}^\psi), &\text{if $\Sigma = S(E)$,}\\
0, &\text{if $\Sigma=S_\infty(K)$ and $\psi$ is non-trivial.}\end{cases}\notag\end{align}
(Note that if $\Sigma = S(E)$, then the second equality here is valid for the trivial character $\psi$ since, by assumption, $S_{\rm ram}(E/K)$ is not empty.)

In particular, since the ramified character $\chi$ cannot be trivial, this implies $L^{(r)}(\chi, 0)\not= 0$ and hence that the value 
\[ L^{(r)}_{S(E)}(\chi, 0) = \left(\prod_{v \in S(E)\setminus S(E_\chi)}(1-\chi(\Fr_v^{-1}))\right)\cdot L^{(r)}(\chi, 0)\]
is not zero if and only if $\chi(\Fr_v^{-1})\not= 1$ for all $v$ in $S(E)\setminus S(E_\chi)$. Since we have observed that the latter condition is satisfied whenever $e_\chi (c_E)\not= 0 $, this proves Theorem \ref{tech req}(iii).

We claim next that to prove Theorem \ref{tech req}(iv) it is enough to show that there exist elements $a = (a_E)_E$ of $\cA_K^{\rm s}$ and $z = (z_E)_E$ of $\VS(K^{\rm s}/K)$ with the property that for every $E$ in $\Omega(K^{\rm s}/K)$ one has $a_E\in \QQ[\cG_E]^\times$ and $\QQ[\cG_E]\cdot z_E = \QQ\otimes_\ZZ\Det_{\cG_E}(C_{E,S(E)})$.

In fact, if this is true then Theorem \ref{tech req}(ii) combines with the argument in the proof of Theorem \ref{tech req}(i) and (\ref{ord formula}) to show that $c := a\cdot \Theta_K^s(z)$ is a system in ${\rm ES}_K$ with the property that
 for all $E$ in $\Omega(K^{\rm s}/K)$ and all $\chi$ in $\widehat{\cG_E}$ one has $e_\chi(c_E)\not= 0$ if and only if
 $L^{(r)}_{S(E)}(\chi, 0)\not= 0$.

To complete the proof of Theorem \ref{tech req} it is thus enough to construct elements $a$ in $\cA_K^{\rm s}$ and $z$ in $\VS(K^{\rm s}/K)$
         with the properties described above.

To do this we again use Lemma \ref{omega-cofinal-tower-lemma} to reduce to consideration of the tower of fields $(K_n)_{n \geq 1}$. Then, to ease notation, we set $\cG_{n} := \cG_{K_n}$ $\cH_n :=\Gal(K_n/K_{n-1})$, $\Xi_n := \Det_{\cG_n}(C_{K_n, S(K_n)})$ and $\nu_n := \nu_{K_n/K_{n-1}}$ for each $n$ in $\NN$.

To construct $z$ we fix $n$ in $\mathbb{N}$ and assume that for each $m < n$ we have fixed an element $z_m$ of $\Xi_m$ such that $\QQ\otimes_\ZZ\Xi_m = \QQ[\cG_m]\cdot z_m$ and $\nu_{m}(z_m) = z_{m-1}$. Then we must construct an element $z_n$ of $\Xi_n$ such that $\QQ\otimes_\ZZ\Xi_n = \QQ[\cG_n]\cdot z_n$ and $\nu_{n}(z_n) = z_{n-1}$

To do this we first choose any pre-image $z'_n$ of $z_{n-1}$ under the (surjective) map $\nu_{K_n/K_{n-1}}$. Then, since $\nu_{K_n/K_{n-1}}$ induces an isomorphism
\begin{align*}  \QQ\otimes_\ZZ\Xi_n =\, &e_{\cH_n}\cdot (\QQ\otimes_\ZZ\Xi_n) \oplus (1-e_{\cH_n})(\QQ\otimes_\ZZ\Xi_n)\\
 \cong \, &(\QQ\otimes_\ZZ\Xi_{n-1}) \oplus (1-e_{\cH_n})(\QQ\otimes_\ZZ\Xi_n)\end{align*}
there exists an element $z_n''$ of $\Xi_n$ such that $e_{\cH_n}(z_n'') = 0$ and $z_n'+ z''_n$ is a $\QQ[\cG_n]$-generator of $\QQ\otimes_\ZZ\Xi_n$.
 The element $z_n := z_n'+ z''_n$ is then an element of the required type.

By an entirely similar argument (which we leave to the reader), one finds that Lemma \ref{torsion lemma}(ii) implies the existence of an element $a$ in $\mathcal{A}^{\rm s}_K$ with the required property.

This completes the proof of Theorem \ref{tech req}.


\section{Higher Kolyvagin derivatives}\label{koly section}

In this section we use the theory of equivariant higher Kolyvagin derivatives to obtain some strong, and unconditional, evidence in support of Conjecture \ref{main conj}.

\subsection{Statement of the main result}\label{state precise}

\subsubsection{}\label{H hyp sec}To state a precise version of Theorem \ref{koly pre thm} we fix an {\it odd} prime $p$ and a finite abelian $p$-extension $F$ of $K$. For simplicity, we shall assume that $F$ contains the maximal $p$-extension $K(1)$ of $K$ inside its Hilbert class field $H_K$.

As before, we continue to set $S:=S_\infty(K)$ and we recall this means that for each $E'$ in $\Omega(K^{\rm s}/K)$ the notation $S(E')$ denotes $S_\infty(K)\cup S_{\rm ram}(E'/K)$.

We write $G_K$ for the absolute Galois group of $K$ and $\omega$ for the $p$-adic Teichm\"uller character.

We also fix a non-trivial character
\[ \chi: G_K \to \QQ_p^{c,\times}\]
of finite prime-to-$p$ order, write $L$ for the abelian extension of $K$ that corresponds to the kernel of $\chi$ and assume that all of the following conditions are satisfied:
\begin{itemize}
\item[(H$_1$)] $L$ is not contained in $K(\mu_p)$.
\item[(H$_2$)] $\chi^2 \neq \omega$ if $p=3$.
\item[(H$_3$)] any place of $K$ that ramifies in $F$ is not completely split in $L$.
\item[(H$_4$)] all archimedean places of $K$ split in $L$.
\item[(H$_5$)] $L/K$ is ramified.
\end{itemize}

In the sequel we shall consider the compositum
\[ E := LF\]
of $L$ and $F$. We note, in particular that, since $p$ is odd, hypothesis (H$_4$) implies that $E$ is contained in $K^{\rm s}$.

We decompose the group $\cG_E$ as a product $\Pi\times \Delta$ with $\Pi$ the Sylow $p$-subgroup of $\cG_E$ and we identify $\cG_F$ and $\cG_L$ with $\Pi$ and $\Delta$ in the obvious way.


We set $\cO:=\ZZ_p[\im \chi]$ and define the $(p,\chi)$-component of a $\Delta$-module $X$ by setting
\[ X_\chi:=\cO \otimes_{\ZZ[\Delta]} X,\]
where $\cO$ is regarded as a $\ZZ[\Delta]$-algebra via $\chi$. 
 For an element $a \in X$, we also set
\[ a^\chi:=1 \otimes a \in X_\chi.\]

The following result is a precise version of Theorem \ref{koly pre thm}.

\begin{theorem}\label{koly thm} For any system $c$ in ${\rm ES}_K$, any field $F$ as above and any character $\chi$ that satisfies all of the hypotheses (H$_1$), (H$_2$), (H$_3$), (H$_4$) and (H$_5$) one has $c^\chi_E \in \Theta_E(\Det_{\cG_E}(C_{E,S(E)}))_\chi$. 
\end{theorem}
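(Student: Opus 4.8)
The strategy is to reduce the assertion about the single field $E=LF$ to an inverse‐limit statement over the tower of fields sitting between $L$ and $K^{\rm s}$, and then to feed this into the equivariant theory of higher Kolyvagin systems from \cite{bss3}. First I would fix the system $c\in {\rm ES}_K$ and restrict attention to the $\chi$‑isotypic part of all relevant modules; because $\chi$ has prime‑to‑$p$ order and $p$ is odd, passing to $(-)_\chi=\cO\otimes_{\ZZ[\Delta]}(-)$ is exact, so the Euler system distribution relation (\ref{dist rel}) transports verbatim to give a compatible family $(c^\chi_{E'})_{E'}$ indexed by those $E'\in\Omega(K^{\rm s}/K)$ that contain $L$, living in the $\chi$‑components of the bidual exterior power modules $\bidual^r_{\cG_{E'}}\cO_{E',S(E')}^\times$. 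Here hypotheses (H$_4$) and (H$_5$) guarantee $E'\subseteq K^{\rm s}$ and that $E'/K$ is ramified, so each such $E'$ genuinely lies in $\Omega(K^{\rm s}/K)$.

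Next I would identify the target $\Theta_E(\Det_{\cG_E}(C_{E,S(E)}))_\chi$ with the image under the Kolyvagin/Stark‑system machinery of a basis of the (rank‑one, by Proposition \ref{invert prop}) determinant module. The point of (H$_1$)–(H$_3$) is precisely that they are the standard hypotheses under which the main result of \cite{bss3} applies: they ensure that the relevant cohomology is "of core rank $r$" after $\chi$‑localization and that the module of higher Kolyvagin systems (equivalently, by \cite{bss3}, the image of the vertical determinantal module under $\Theta$) is free of rank one over the appropriate Iwasawa algebra $\widehat{\mathcal{R}}^p_{K^{\rm s}/K,\chi}$. Concretely, I would invoke \cite{bss3} to produce a generator $z=(z_{E'})_{E'}$ of the $\chi$‑part of ${\rm VS}(K^{\rm s}/K)$ such that $\Theta_{K}^{\rm s}(z)$ generates the module of "basic" systems $\chi$‑locally, and then express the given system $c^\chi$, via the freeness/rank‑one statement, as $r_c\cdot \Theta^{\rm s}_K(z)$ for some $r_c$ in that Iwasawa algebra. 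Evaluating at the finite level $E$ gives $c^\chi_E = \pi_E(r_c)\cdot \Theta_E(z_E)$, which lands in $\Theta_E(\Det_{\cG_E}(C_{E,S(E)}))_\chi$ because $z_E$ is a $\ZZ_p[\cG_E]_\chi$‑basis of the determinant module.

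The delicate point — and what I expect to be the main obstacle — is the passage from the Euler‑system distribution relations at finite levels to membership in the image of the determinant module at the single level $E$, i.e. showing that the "vertical" normalization is not lost when one truncates the tower. One must check that the finite‑level element $c^\chi_E$ does not merely lie in the $\QQ_p$‑span of $\Theta_E(z_E)$ (which is clear) but actually in the integral lattice $\Theta_E(\Det_{\cG_E}(C_{E,S(E)}))_\chi$; this requires using the full force of the norm‑compatibility of $c$ up the tower $K^{\rm s}/E$ together with the Mittag‑Leffler / surjectivity property of the transition maps $\nu_{E'/E}$ established in the proof of Proposition \ref{invert prop}, so that the coefficient $r_c$ is genuinely integral rather than merely rational. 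The hypotheses (H$_1$)–(H$_3$) are exactly what make the relevant $\chi$‑localized Selmer/cohomology groups well‑behaved enough (cyclic, of the expected rank, with controlled torsion) for the rank‑one freeness in \cite{bss3} to hold and hence for this integrality to follow; verifying that the hypotheses of \emph{loc. cit.} are met in our situation — translating between the "Kolyvagin system" language there and the "vertical determinantal system / basic Euler system" language here — is the technical heart of the argument.
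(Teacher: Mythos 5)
Your proposal correctly identifies the external input (the main result of \cite{bss3}), the role of hypotheses (H$_1$)--(H$_3$), and the need to pass to $\chi$-isotypic components, but there is a genuine gap at the decisive step. You propose to express $c^\chi$ directly as $r_c\cdot\Theta^{\rm s}_K(z)$ for some $r_c$ in the Iwasawa algebra, invoking a ``freeness/rank-one statement.'' However, the rank-one freeness proved in \cite{bss3} is for the module ${\rm KS}_r(F/K,\chi)$ of \emph{Kolyvagin systems} over $\cO[\Pi]$, not for any module of Euler systems over $\widehat{\cR}^p_{K^{\rm s}/K,\chi}$. There is no known freeness result for the strict Euler system module ${\rm ES}_r(\cK/K,\chi)$ --- indeed, the assertion that every Euler system in it is an Iwasawa-algebra multiple of the basic one is, at the $\chi$-level, essentially the content of Conjecture \ref{main conj} itself, so it cannot be assumed. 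Your fallback suggestion (norm-compatibility up the tower plus Mittag--Leffler/surjectivity of the maps $\nu_{E'/E}$) concerns only the determinant modules and does not yield integrality of $r_c$.

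The missing device is the $F$-relative Kolyvagin derivative homomorphism $\mathcal{D}_{F,r}\colon {\rm ES}_r(\cK/K,\chi)\to{\rm KS}_r(F/K,\chi)$ from \cite{bss3}. The paper's argument (Lemma \ref{first step to proof} and Theorem \ref{key diagram}) has two steps you omit. First, one must prove that $\mathcal{D}_{F,r}(\Theta^\chi_{L\cK}(b))$ is a \emph{generator} of ${\rm KS}_r(F/K,\chi)$; this is achieved by constructing a surjection $\Delta^\chi_{F,r}\colon{\rm VS}_{L\cK}^{(p)}\twoheadrightarrow{\rm SS}_r(F/K,\chi)$ onto the Stark system module and showing it fits into a commutative square with $\mathcal{D}_{F,r}$, $\Theta^\chi_{L\cK}$ and the algebraic regulator isomorphism $\mathcal{R}_r^\chi$ --- a nontrivial computation ultimately resting on \cite[Th. 5.16]{bks1}. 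Second, having written $\mathcal{D}_{F,r}(c^\chi_{\rm str})=x\cdot\mathcal{D}_{F,r}(\Theta^\chi_{L\cK}(b))$ with $x\in\cO[\Pi]$ by freeness of the KS module, one evaluates both Kolyvagin systems at the \emph{empty} product of primes (level $1$) to recover the identity $c^\chi_E=x\cdot\Theta_E(b_E)^\chi$. Without passing through Kolyvagin systems and this evaluation at level $1$, there is no route from the freeness statement of \cite{bss3} to the integrality claim at the single field $E$.
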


In the remainder of \S\ref{state precise} we shall reduce the proof of this result to the proof of a statement about higher rank Stark systems for $F/K$ and $\chi$.

\subsubsection{}We fix an abelian pro-$p$ extension $\cK$ of $K$ that contains both $F$ and the maximal $p$-extension $K(\fq)$ of $K$ inside its ray class field modulo $\fq$ for all but finitely many primes $\fq $ of $K$. We note that, since $p$ is odd, $\cK$ is a subfield of $K^{\rm s}$.

We write $\Omega'(\cK/K)$ for the set of {\em all} finite extensions of $K$ in $\cK$ and for each $F'$ in $\Omega'(\cK/K)$ we set $U_{F'}:=(\cO_{LF'}^\times)_\chi$.

We continue to write $r$ in place of $r_K:=|S_\infty(K)|$. We recall from \cite[Def. 2.4]{bss3} that a `strict $p$-adic Euler system' of rank $r$ for the extension $\cK/K$ and character $\chi$ is a collection
\[c=(c_{F'})_{F'} \in \prod_{F' \in \Omega'(\cK/K)} {\bigcap}_{\cO[\cG_{F'}]}^r U_{F'}\]
that for all $F'$ and $F''$ in $\Omega'(\cK/K)$ with $F'\subseteq F''$ satisfies the distribution relation (\ref{dist rel}) with $E'/E$ replaced by $F''/F'$. The set of strict $p$-adic Euler systems of rank $r$ for $\cK/K$ and $\chi$ is denoted ${\rm ES}_r(\cK/K,\chi)$  and is naturally an $\cO[[\Gal(\cK/K)]]$-module.

We further recall from \cite[\S2.3]{bss3} that a Kolyvagin system of rank $r$ for $F/K$ and $\chi$ is a collection of elements that is parametrised by certain square-free products of prime ideals of $K$ and that the set ${\rm KS}_r(F/K,\chi)$ of all such collections is naturally a module over the quotient $\mathcal{O}[\Pi]$ of $\cO[[\Gal(\cK/K)]]$.

In addition, the main result of \cite{bss3} implies that, under the present hypotheses on $\cK/K$ and $\chi$, the $\mathcal{O}[\Pi]$-module ${\rm KS}_r(F/K,\chi)$ is free of rank one and there exists a canonical `$F$-relative $r$-th order Kolyvagin derivative' homomorphism of $\cO[[\Gal(\cK/K)]]$-modules
\[ \mathcal{D}_{F,r}: {\rm ES}_r(\cK/K,\chi) \to {\rm KS}_r(F/K,\chi).\]

 We write ${\rm VS}_{L\cK}^{(p)}$ for the pro-$p$-completion of ${\rm VS}(L\cK/K)$.

\begin{lemma}\label{first step to proof} \
\begin{itemize}
\item[(i)] For $c$ in ${\rm ES}_K$ the assignment $F'\mapsto c_{LF'}^\chi$ defines an element $c^\chi_{\rm str}$ of ${\rm ES}_r(\cK/K,\chi)$.
\item[(ii)] There exists a natural homomorphism of $\widehat{\cR}^p_{K^{\rm s}/K}$-modules
\[ \Theta_{L\cK}^\chi: {\rm VS}_{L\cK}^{(p)} \to {\rm ES}_r(\cK/K,\chi).\]
\item[(iii)] Let $b$ be a generator of ${\rm VS}_{L\cK}^{(p)}$ over $\widehat{\cR}^p_{K^{\rm s}/K}$. Then Theorem \ref{koly thm} is valid if the $\cO[\Pi]$-module ${\rm KS}_r(F/K,\chi)$ is generated by the image of $\Theta^\chi_{L\cK}(b)$ under $\mathcal{D}_{F,r}$.
\end{itemize}
\end{lemma}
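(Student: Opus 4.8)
The plan is to treat the three claims in order, using the constructions of \S\ref{basic es section} together with the functoriality of the complexes $C_{E,S(E)}$ recorded in Proposition \ref{new one}.

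For claim (i), the key point is that for $c$ in ${\rm ES}_K$ each term $c_E$ lies in ${\bidual}_{\cG_E}^r\cO_{E,S(E)}^\times$, and applying the exact functor $\cO\otimes_{\ZZ[\Delta]}(-)$ (which makes sense since $\Delta$ has order prime to $p$, so $\cO$ is a projective $\ZZ[\Delta]$-module) sends this lattice into ${\bigcap}_{\cO[\cG_{F'}]}^r U_{F'}$ with $U_{F'}=(\cO_{LF'}^\times)_\chi$; here one uses that taking $\chi$-components commutes with exterior power biduals for modules over a group ring whose order-prime-to-$p$ part is being isolated, which is a standard consequence of the identification (\ref{bidual-lattice-identification}) and semisimplicity of $\cO[\Delta]$. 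One must then check that the $\chi$-projected system $(c_{LF'}^\chi)_{F'}$ still satisfies the distribution relation (\ref{dist rel}): this follows because the $\chi$-component functor is compatible with the field-theoretic norm maps $\N_{E'/E}^r$ and with multiplication by the Euler factors $\prod_v(1-\Fr_v^{-1})$, so the relation for $c$ over $K^{\rm s}/K$ specializes to the relation for $(c_{LF'}^\chi)_{F'}$ over $\cK/K$ and $\chi$. One small point to verify is that every $F'$ in $\Omega'(\cK/K)$ — including $F'=K$, which is not in $\Omega(K^{\rm s}/K)$ — gives a well-defined term: for the unramified piece $LF'/L$ one reads off $c_{LF'}$ directly since $LF'/K$ is ramified (as $L/K$ is, by (H$_5$)), so $LF'\in\Omega(K^{\rm s}/K)$, and this is where hypothesis (H$_5$) enters.

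For claim (ii), the homomorphism $\Theta_{L\cK}^\chi$ is built by composing three maps: first the map $\Theta_{L\cK}^{\rm s}$ of \S\ref{projection-map-section} (or rather its pro-$p$ completion, legitimate since $L\cK\subseteq K^{\rm s}$ as $p$ is odd and all archimedean places split in $L$ by (H$_4$)), which lands in $\prod_{E'\in\Omega(L\cK/K)}\QQ\otimes_\ZZ{\bigwedge}_{\cG_{E'}}^r\cO_{E',S(E')}^\times$; then the $\chi$-component functor, which carries this into $\prod_{F'}\cO\otimes_{\ZZ[\Delta]}{\bigwedge}^r\cO_{LF'}^\times$; and finally the observation, via Theorem \ref{tech req}(i) and the argument of \S\ref{somr section}, that the image consists of systems satisfying the distribution relation, i.e.\ lands in ${\rm ES}_r(\cK/K,\chi)$. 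Equivariance for $\widehat{\cR}^p_{K^{\rm s}/K}$ is inherited from $\Theta_K^{\rm s}$. Here one again uses that the $\chi$-component of ${\bidual}_{\cG_{E'}}^r\cO_{E',S(E')}^\times$ — and hence, after multiplying by an element of $\cA_{E'}$, the relevant integral lattice — sits inside ${\bigcap}_{\cO[\cG_{F'}]}^r U_{F'}$, so that the composite takes values in the integral (rather than just rational) strict Euler system module; strictly, one may first clear denominators using Theorem \ref{tech req}(ii), or simply note that since $\Delta$ has order prime to $p$ the annihilator $\cA_E^{\rm s}$ becomes a unit after $\chi$-projection up to a power of $2$, which is invertible in $\cO$ as $p$ is odd.

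For claim (iii), the argument is a short diagram chase. By construction $c^\chi_{\rm str}=\Theta_{L\cK}^\chi(z)$ for some $z$ in ${\rm VS}_{L\cK}^{(p)}$ — this is essentially the content of the surjectivity of $\Theta_E$ onto its image combined with Proposition \ref{invert prop}, though what we really need is only that any element we wish to hit lies in the image — and writing $z=\rho\cdot b$ with $\rho\in\widehat{\cR}^p_{K^{\rm s}/K}$ and $b$ the fixed generator, equivariance gives $\mathcal{D}_{F,r}(c^\chi_{\rm str})=\rho\cdot\mathcal{D}_{F,r}(\Theta_{L\cK}^\chi(b))$. If $\mathcal{D}_{F,r}(\Theta_{L\cK}^\chi(b))$ generates the free rank-one $\cO[\Pi]$-module ${\rm KS}_r(F/K,\chi)$, then $\mathcal{D}_{F,r}(c^\chi_{\rm str})$ lies in $\cO[\Pi]\cdot\mathcal{D}_{F,r}(\Theta_{L\cK}^\chi(b))$, and unwinding the $F$-relative Kolyvagin derivative at the trivial Kolyvagin index recovers that $c_E^\chi$ lies in $\Theta_E(\Det_{\cG_E}(C_{E,S(E)}))_\chi$ — one needs here that the "zeroth" component of $\mathcal{D}_{F,r}$ reproduces, up to the identification $E=LF$, the map $\Theta_E$ followed by $\chi$-projection, which is built into the definition of $\mathcal{D}_{F,r}$ from \cite{bss3}.

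I expect the main obstacle to be claim (iii), specifically pinning down precisely the relationship between the $F$-relative $r$-th order Kolyvagin derivative $\mathcal{D}_{F,r}$ of \cite{bss3} and the map $\Theta_E$: one must show that evaluating $\mathcal{D}_{F,r}$ at the empty set of Kolyvagin primes, on the system $\Theta_{L\cK}^\chi(b)$, returns $\Theta_E(b_E)^\chi$ up to a unit, so that the generation statement transfers correctly to the assertion about $c_E^\chi$. This is a compatibility between two constructions in different papers and will require careful bookkeeping of normalizations; the rest is formal.
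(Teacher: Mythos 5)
Your outline of claim (iii) matches the paper's argument (despite the red-herring opening sentence about $c^\chi_{\rm str}$ lying in the image of $\Theta^\chi_{L\cK}$, which is neither needed nor claimed; the point is simply that ${\rm KS}_r(F/K,\chi)$ is generated by $\mathcal{D}_{F,r}(\Theta^\chi_{L\cK}(b))$, so $\mathcal{D}_{F,r}(c^\chi_{\rm str})$ is an $\cO[\Pi]$-multiple of it, and evaluating at the empty Kolyvagin index gives the result). However, your treatments of (i) and (ii) both contain a genuine gap, and it is the same one in each case.

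The module of strict $p$-adic Euler systems requires each component to lie in ${\bigcap}_{\cO[\cG_{F'}]}^r U_{F'}$ with $U_{F'}=(\cO_{LF'}^\times)_\chi$, the $\chi$-component of the \emph{plain} unit group. By contrast, what you get directly from $c\in{\rm ES}_K$ (in (i)) or from the map $\Theta_{LF'}$ (in (ii)) is an element of the exterior power bidual of the $S(LF')$-\emph{units}, i.e.\ ${\bigcap}_{\cO[\cG_{F'}]}^r(\cO^\times_{LF',S(LF')})_\chi$. Exactness of $\cO\otimes_{\ZZ[\Delta]}(-)$ and the identification (\ref{bidual-lattice-identification}) do \emph{not} by themselves close this gap: one still has to show that the element in question has no component along the ``extra'' $S$-unit directions. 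The paper does this by showing that $e_{\psi\chi}(c_{LF'})=0$ whenever $L^{(r)}_{S(LF')}(\psi\chi,0)=0$, which (via the order-of-vanishing formula (\ref{ord formula}) and the fact that every $\psi\chi$ is ramified by (H$_5$)) is exactly the condition needed; this vanishing is supplied by Theorem \ref{tech req}(iii). Your argument for (i) never invokes any of this and so cannot produce the required inclusion into ${\bigcap}^r U_{F'}$.

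The same issue infects your (ii), and in addition the ``up to a power of $2$'' reasoning for the annihilator $\cA_{LF'}$ is wrong in general: for $K$ totally imaginary the group $\mu_{LF'}$ may have nontrivial $p$-torsion, in which case $\cA_{LF',\chi}$ is not obtained from a power of $2$. What the paper actually uses is hypothesis (H$_1$) (that $L\not\subseteq K(\mu_p)$), which forces $\mu_{LF',\chi}$ to vanish and hence $\cA_{LF',\chi}=\cO[\Pi]$; combining this with the argument of Theorem \ref{tech req}(ii)/(iv) gives the integrality of $\Theta^\dag_{LF'}(\Xi_{F',\chi})$ directly, without any denominators to clear. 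You should replace the power-of-$2$ heuristic by this use of (H$_1$), and in both (i) and (ii) you must add the $L$-value/order-of-vanishing step to land in the plain-unit lattice rather than the $S$-unit lattice.
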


\begin{proof} Hypotheses (H$_5$) implies that for each $F'$ in $\Omega'(\cK/K)$ the field $LF'$ belongs to $\Omega(L\cK /K)$. Thus, if $c$ belongs to ${\rm ES}_K$, then for $F'$ in $\Omega'(\cK/K)$ the element $c_{{\rm str},F'}^\chi := c_{LF'}^\chi$ belongs to ${\bigcap}_{\cO[\cG_{F'}]}^r(\mathcal{O}^\times_{LF',S(LF')})_\chi$ and the collection
\[ c_{{\rm str}}^\chi := ( c_{{\rm str},F'}^\chi)_{F'}\]
satisfies the necessary distribution relations as the fields $F'$ vary.

 To deduce $c_{{\rm str}}^\chi$ belongs to ${\rm ES}_r(\cK/K,\chi)$ it is thus enough to show that for each $F'$
   the element $c_{LF'}^\chi$ belongs to ${\bigcap}_{\cO[\cG_{F'}]}^rU_{F'} $.

In view of the equality (\ref{ord formula}), it is therefore sufficient to show that $e_{\psi \chi} (c_{LF'})=0$ for any character $\psi$ in $ \widehat{\cG_{F'}}$ for which $L^{(r)}_{S(LF')}(\psi\chi,0)=0$, where we regard each product $\psi \chi$ as a character in $\widehat {\cG_{LF'}}$ in the obvious way.

In addition, since (H$_5$) implies $\psi\chi$ is ramified, the required vanishing is a direct consequence of Theorem \ref{tech req}(iii). This proves claim (i).


To prove claim (ii) we show first that for each $F'$ in $\Omega'(\cK/K)$ the map $\Theta_{LF'}^\dag:= \QQ_p^c\otimes_\QQ\Theta_{LF'}$ sends $\Xi_{F',\chi} := \Det_{\cG_{LF'}}(C_{LF',S(LF')})_\chi$ to ${\bigcap}_{\cO[\cG_{F'}]}^rU_{F'}$.

To show this we note that (H$_1$) implies $\mu_{LF',\chi}$ vanishes so that $\mathcal{A}_{LF',\chi}$ is equal to $\ZZ[\cG_{LF'}]_\chi$ and hence contains the element $e_\chi$.

From the argument of Theorem \ref{tech req}(iv) we can therefore deduce $\Theta_{LF'}^\dag(\Xi_{F',\chi})$ is contained in ${\bigcap}_{\cO[\cG_{F'}]}^r U_{F'}$. 

We can therefore define $\Theta_{L\cK}^\chi$ to be the product map $\prod_{F'}\Theta^\chi_{F'}$ where $F'$ runs over $\Omega'(\cK/K)$ and each $\Theta^\chi_{F'}$ is the composite homomorphism
\[ {\rm VS}_{L\cK}^{(p)} \to \Det_{\cG_{LF'}}(C_{LF',S(LF')})_\chi \xrightarrow{\Theta^\dag_{LF'}} {\bigcap}_{\cO[\cG_{F'}]}^rU_{F'}\]
where the first map is the natural projection.

To prove claim (iii) we assume $\mathcal{D}_{F,r}(\Theta^\chi_{L\cK}(b))$ is a generator over $\cO[\Pi]$ of ${\rm KS}_r(F/K,\chi)$.
 Then for any $c$ in ${\rm ES}_K$ claim (i) implies there exists an element $x = x_{b,c}$ of $\mathcal{O}[\Pi]$ such that $\mathcal{D}_{F,r}(c^\chi_{\rm str}) = x\cdot \mathcal{D}_{F,r}(\Theta^\chi_{L\cK}(b))$.

 Upon evaluating these (equivariant) Kolyvagin systems at $1$ (that is, at the empty product of prime ideals of $K$) one deduces that %
\[ c_E^\chi  = c^\chi_{{\rm str},F} = \mathcal{D}_{F,r}(c^\chi_{\rm str})_{1} = x\cdot \mathcal{D}_{F,r}(\Theta^\chi_{L\cK}(b))_{1} = x\cdot  \Theta_{L\cK}^\chi(b)_{F} = x\cdot \Theta_{E}(b_E)^\chi\]
where $b_E$ denotes the image of $b$ in $\ZZ_p\otimes_\ZZ\Det_{\cG_{E}}(C_{E,S(E)})$. This shows that $c^\chi_E $ belongs to
 $\Theta_E(\Det_{\cG_E}(C_{E,S(E)}))_\chi$, as required.
\end{proof}

In view of Lemma \ref{first step to proof}(ii), to prove Theorem \ref{koly thm} it is enough to show that $\mathcal{D}_{F,r}(\Theta^\chi_{L\cK}(b))$ generates ${\rm KS}_r(F/K,\chi)$ over $\cO[\Pi]$.


To do this we use the $\mathcal{O}[\Pi]$-module ${\rm SS}_r(F/K,\chi)$ of Stark systems of rank $r$ for $F/K$ and $\chi$ that is defined in \cite[\S2.3]{bss3} and recall from \cite[Th. 3.3(ii)]{bss3} that there exists a canonical `algebraic regulator' isomorphism of $\mathcal{O}[\Pi]$-modules of the form
\[ \mathcal{R}^\chi_r: {\rm SS}_r(F/K,\chi) \xrightarrow{\sim} {\rm KS}_r(F/K,\chi).\]

The key now is to prove the following result (which, we note, also justifies \cite[Rem. 3.5]{bss3}). We abbreviate $\Gal(L\cK/K)$ to $\cG_{L\cK}$.

\begin{theorem}\label{key diagram} There exists a canonical surjective homomorphism
\[ \Delta^\chi_{F,r}: {\rm VS}_{L\cK}^{(p)} \to {\rm SS}_r(F/K,\chi)\]
of $\cO[[\cG_{L\cK}]]$-modules that makes the following diagram commute
\[\begin{CD}
{\rm VS}_{L\cK}^{(p)} @> \Theta_{L\cK}^\chi >> {\rm ES}_r(\cK/K,\chi)\\
@V \Delta_{F,r}^\chi VV @VV \mathcal{D}_{F,r} V\\
{\rm SS}_r(F/K,\chi) @> \mathcal{R}_r^\chi >> {\rm KS}_r(F/K,\chi).\end{CD}\]
\end{theorem}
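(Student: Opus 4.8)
The plan is to construct the map $\Delta^\chi_{F,r}$ as a composite of three natural morphisms, and then to verify commutativity of the square level-by-level over $\Omega'(\cK/K)$ before passing to the inverse limit. First I would recall from \cite[\S2.3]{bss3} that the module ${\rm SS}_r(F/K,\chi)$ of Stark systems is itself defined as an inverse limit, indexed by the square-free products $\fn$ of suitable `Kolyvagin primes', of exterior power biduals of the $\chi$-components of $(S(F\fn)$-integral$)$ unit groups of the fields $F\fn$; more precisely (using \cite[Th. 3.3]{bss3}) it is identified with the inverse limit over $\fn$ of the modules $\bidual^{r+\nu(\fn)}_{\cO[\Pi]}(\cO_{LF\fn,S(LF\fn),T}^\times)_\chi$ where $\nu(\fn)$ is the number of prime factors of $\fn$. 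The point is that each such bidual is, in turn, a quotient of $\Det_{\cG_{LF\fn}}(C_{LF\fn,S(LF\fn)})_\chi$ via (the $\chi$-localisation of) the map $\Theta_{LF\fn}$ — this is exactly what the proof of Theorem \ref{tech req}(ii) and (iv) and the proof of Lemma \ref{first step to proof}(ii) establish once one notes, using (H$_1$), that $\mathcal{A}_{LF\fn,\chi} = \cO[\cG_{LF\fn}]_\chi$ so the torsion obstruction disappears.

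Next I would define $\Delta^\chi_{F,r}$ by projecting an element $(z_{E'})_{E'}$ of ${\rm VS}^{(p)}_{L\cK}$ to each $\Det_{\cG_{LF\fn}}(C_{LF\fn,S(LF\fn))})_\chi$ and applying $\Theta^\dag_{LF\fn}$; one must check that the resulting family is compatible under the Stark-system transition maps. This compatibility reduces, via the explicit description of the Stark-system transition maps in \cite[\S2.3, Prop.~2.6 or equivalent]{bss3} in terms of the maps `$(1-\Fr_{\fq}^{-1})$-twisted norm' together with the residue-field truncation triangles of Proposition \ref{new one}(ii)(iii), to precisely the commutative-diagram argument already used in the proof of Theorem \ref{tech req}(i): the transition maps $\nu_{E'/E}$ on the $\Det$-side are designed to match, after applying $\Theta$, the distribution/truncation relations on the unit-group side. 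Thus the construction of $\Delta^\chi_{F,r}$ is essentially bookkeeping built on machinery already in place. Surjectivity of $\Delta^\chi_{F,r}$ follows because each $\Theta^\dag_{LF\fn}$ is surjective onto the relevant $e_{(LF\fn)}$-component (by construction of $\Theta_{LF\fn}$), each bidual appearing in the Stark-system limit is (by the computation of orders of vanishing, equation (\ref{ord formula})) exactly that $e_{(LF\fn)}$-component after $\chi$-localisation, and the inverse system underlying ${\rm VS}^{(p)}_{L\cK}$ has the Mittag–Leffler property (Lemma \ref{omega-cofinal-tower-lemma} plus surjectivity of the $\nu$'s, as in Proposition \ref{invert prop}), so surjectivity is preserved in the limit.

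Finally I would verify the commutativity of the square itself. By \cite[Th. 3.3(ii)]{bss3} the algebraic regulator $\mathcal{R}^\chi_r$ is, componentwise in $\fn$, induced by the Rubin-style `exterior power / regulator' identifications among the various biduals, and the Kolyvagin derivative $\mathcal{D}_{F,r}$ is, componentwise, the $F$-relative derivative operator applied to the strict $p$-adic Euler system $\Theta^\chi_{L\cK}(z)$. Chasing a fixed $z$ around the square: going right-then-down gives the Kolyvagin derivative of $(\Theta^\dag_{LF'}(z_{LF'}))_{F'}$, while going down-then-right gives $\mathcal{R}^\chi_r$ applied to the Stark system $(\Theta^\dag_{LF\fn}(z_{LF\fn}))_\fn$; both are computed by the same family of maps out of the same family of determinant modules $\Det_{\cG_{LF\fn}}(C_{LF\fn,S(LF\fn)})_\chi$, and the required identity is exactly the assertion that the derivative operator on Euler systems corresponds under $\mathcal{R}^\chi_r$ to the defining transition structure of Stark systems — which is a formal consequence of the way $\mathcal{D}_{F,r}$ and $\mathcal{R}^\chi_r$ are constructed in \cite{bss3}.

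The main obstacle, I expect, is not any single hard estimate but rather the careful matching of normalisations: one must check that the truncation triangles of Proposition \ref{new one}, the evaluation isomorphisms (\ref{triv triv}), the Rubin-lattice identification (\ref{bidual-lattice-identification}), and the explicit transition maps defining ${\rm SS}_r$, ${\rm KS}_r$ and $\mathcal{D}_{F,r}$ in \cite{bss3} are all compatible up to the sign and Euler-factor conventions in play, so that the two routes around the square agree on the nose rather than merely up to a unit. Establishing this bookkeeping cleanly — ideally by factoring everything through the single family of determinant modules and invoking functoriality of $\Det$ — is where the real work lies, and it is also what makes the diagram `canonical' in the sense claimed; as the remark notes, this is precisely the content needed to justify \cite[Rem. 3.5]{bss3}.
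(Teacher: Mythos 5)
Your construction starts from a misidentification of what a Stark system is, and this propagates to the rest of the argument. You describe the Stark-system modules as biduals of unit groups of the \emph{varying} fields $LF\fn$, with transition maps that are $(1-\Fr_\fq^{-1})$-twisted norms. That is the Euler-system picture, not the Stark-system picture. In \cite{bss3}, and as recalled in \S4.2 of this paper, a Stark system of rank $r$ and level $m$ lives entirely over the \emph{fixed} field $E=LF$: the component at $\fn$ is an element of ${\bigcap}_{\cO_m[\Pi]}^{r+\nu(\fn)}\cS_m^\fn$ with $\cS_m^\fn\subseteq (E^\times/p^m)_\chi$, the exterior-power index grows with $\nu(\fn)$, and the transition maps $v_{\fm,\fn}$ are signed wedges of the valuation maps $v_\fq$ of (\ref{def v}) --- not norm maps. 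Consequently, the map you define (project to $\Det_{\cG_{LF\fn}}(C_{LF\fn,S(LF\fn)})_\chi$ and apply $\Theta^\dag_{LF\fn}$) lands in an exterior power of rank $r$ of units of $LF\fn$, which is the wrong target; the paper's $\Delta_\fn$ instead first projects to $\Det_{\cO[\Pi]}\bigl((C_{E,S_\fn})_\chi\bigr)$ and then applies a \emph{different} map $\Theta_{E,\fn}$, defined with respect to the enlarged place set $S_\fn$ and a basis of $Y_{E,\Sigma_\fn,\chi}$, whose target is ${\bigcap}_{\cO[\Pi]}^{r+\nu(\fn)}U_{E,\fn,\chi}$. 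Your reduction of the compatibility check to ``the same commutative-diagram argument as in Theorem~\ref{tech req}(i)'' is therefore inapplicable: what must be checked is compatibility with wedges of valuation maps, which is the content of the ``horizontal determinant systems'' machinery of \cite[\S3.3]{sbA}, an essentially different verification.

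Two further substantive omissions. First, the paper interposes an intermediate module ${\rm SS}^{\rm glob}_r(F/K,\chi)$ (a characteristic-zero inverse limit over $\cN_0$), and the genuinely delicate part of getting a surjection onto ${\rm SS}_r(F/K,\chi)$ is Lemma~\ref{partial H map} together with Lemma~\ref{cofinal}, the latter needing the hypotheses (H$_1$) and (H$_3$) to produce a cofinal set of $\fn$ killing ${\rm Cl}(\cO_{E,\Sigma_\fn})_\chi$ and rendering the relevant modules free. Your Mittag--Leffler/surjectivity-in-the-limit remark does not substitute for this. Second, the commutativity of the square is not ``a formal consequence of the way $\mathcal{D}_{F,r}$ and $\mathcal{R}^\chi_r$ are constructed.'' The paper reduces it to an explicit identity, for each $\fn$ and each $b_\fn$, equating $\sum_{\sigma\in\cH_\fn}\sigma\bigl(\Theta_{E(\fn)}(b_\fn)^\chi\bigr)\otimes\sigma^{-1}$ with $\iota_\fn\bigl(({\bigwedge}_{\fq\mid\fn}\varphi_\fq^\fn)(\Delta_\fn(b_\fn^\chi))\bigr)$, where the $\varphi_\fq^\fn$ are built from local reciprocity maps and $\iota_\fn$ is the injection of \cite[(4.1.1)]{bss3}; establishing this identity is the arithmetic content of \cite[\S5.7]{bks1} (proof of \cite[Th.~5.16]{bks1}) and of the proof of \cite[Th.~4.13]{bss3}. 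Your proposal does not identify this step, and it is precisely the point that turns the theorem from bookkeeping into a substantive result justifying \cite[Rem.~3.5]{bss3}.
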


Before proving this result, we note that it does indeed finish the proof of Theorem \ref{koly thm} since the surjectivity of the composite map $\mathcal{R}_r^\chi\circ\Delta_{F,r}^\chi = \mathcal{D}_{F,r}\circ\Theta_{L\cK}^\chi$ implies that the image $\mathcal{D}_{F,r}(\Theta^\chi_{L\cK}(b))$ of $b$ under this map must be a
generator of the $\cO[\Pi]$-module ${\rm KS}_r(F/K,\chi)$, as required.

\subsection{Stark systems and the proof of Theorem \ref{key diagram}} 

We first quickly review the definition of Stark systems given in \cite{bss3}.

\subsubsection{}Let $m$ be a non-negative integer and write $\cP_m$ for the set of prime ideals of $K$ that do not divide $p$ and split completely in $H_KE(\mu_{p^m}, (\cO_K^\times)^{p^{-m}})$, where $(\cO_K^\times)^{p^{-m}}$ denotes the set of elements $x$ in $\QQ^c$ with $x^{p^m} \in \mathcal{O}_K^\times$. (Recall here that $H_K$ denotes the Hilbert class field of $K$ and $E$ the field $LF$.)

We write $\cN_m$ for the set of square-free products of primes in $\cP_m.$
%
%
 For each product $\fn$ in $\cN_m$, we write $\Sigma_\fn$ for the union of $S_\infty(K)$ and the set of prime divisors of $\fn$, we set
\[ U_{E,\fn} := \mathcal{O}_{E,\Sigma_\fn}^\times\]
and we write $\nu(\fn)$ for the number of prime divisors of $\fn$.

For any prime $\fq$ of $K$, we define
\begin{eqnarray}\label{def v}
v_\fq: E^\times \to \ZZ[\cG_E]; \ a \mapsto \sum_{\sigma \in \cG_E} {\rm ord}_\fQ(\sigma a )\sigma^{-1},
\end{eqnarray}
where $\fQ$ is a fixed place of $E$ lying above $\fq$ and ${\rm ord}_\fQ$ denotes the normalized additive valuation at $\fQ$.

We set $\mathcal{O}_m:= \mathcal{O}/(p^m)$. Upon reduction modulo $p^m$, the above map induces a map of $\cO_m[\Pi]$-modules $(E^\times/p^m)_\chi \to \cO_m[\Pi]$ that we also denote by $v_\fq$ and we then set
$$\cS_m^\fn:=\{a \in (E^\times/p^m)_\chi \mid v_\fq(a)=0 \text{ for every }\fq \nmid \fn\}.$$

For each pair of products $\fm$ and $\fn$ in $\cN_m$ with $\fn \mid \fm$, there is then an exact sequence of $\cO_m[\Pi]$-modules
$$0 \to \cS_m^\fn \to \cS_m^\fm \xrightarrow{\bigoplus_{\fq \mid \fm/\fn}v_\fq}\bigoplus_{\fq \mid \fm/\fn} \cO_m[\Pi]$$
and hence (via the general result of \cite[Prop.~A.3]{sbA}), for any non-negative integer $r$ a map of $\cO_m[\Pi]$-modules
$$v_{\fm,\fn}:=\pm {\bigwedge}_{\fq \mid \fm/\fn} v_\fq: {\bigcap}_{\cO_m[\Pi]}^{r+\nu(\fm)}\cS_m^\fm \to {\bigcap}_{\cO_m[\Pi]}^{r+\nu(\fn)}\cS_m^\fn .$$
Here the sign is chosen so that $v_{\fm',\fn}=v_{\fm,\fn}\circ v_{\fm',\fm}$ if $\fn \mid \fm \mid \fm'$ (cf. \cite[\S 3.1]{sbA}). 

Then the $\cO_m[\Pi]$-module of Stark systems of rank $r$ and level $m$ for $F/K$ and $\chi$ is defined to be the inverse limit
$$ {\rm SS}_r(F/K,\chi)_m:=\varprojlim_{\fn \in \cN_m} {\bigcap}_{\cO_m[\Pi]}^{r+\nu(\fn)}\cS_m^\fn$$
with transition maps $v_{\fm,\fn}$.

We also recall, from \cite[\S2.3]{bss3}, that the present hypotheses on $\chi$ imply that for each non-negative integer $m$ there exists a natural surjective homomorphism of $\mathcal{O}_{m+1}[\Pi]$-modules
$\pi_m: {\rm SS}_r(F/K,\chi)_{m+1} \twoheadrightarrow {\rm SS}_r(F/K,\chi)_m$ so that one can set
$${\rm SS}_r(F/K,\chi):=\varprojlim_m {\rm SS}_r(F/K,\chi)_m ,$$
where the limit is taken with respect to the transition morphisms $\pi_m$.

We next note the argument of \cite[Prop. 3.6]{sano} shows that, just as above, the maps $v_{\fq}$ in (\ref{def v}) give rise to a homomorphism of $\mathcal{O}[\Pi]$-modules
\[ v_{\fm,\fn}: {\bigcap}_{\cO[\Pi]}^{r+\nu(\fm)} U_{E,\fm,\chi} \to {\bigcap}_{\cO[\Pi]}^{r+\nu(\fn)}U_{E,\fn,\chi} \]
and we define the $\cO[\Pi]$-module of global Stark systems of rank $r$ for $F/K$ and $\chi$ to be

\[ {\rm SS}^{\rm glob}_r(F/K,\chi):= \varprojlim_{\fn \in \cN_0} {\bigcap}_{\cO[\Pi]}^{r+\nu(\fn)} U_{E,\fn,\chi},\]
where the limit is taken with respect to the maps $v_{\fm,\fn}$.

\begin{lemma}\label{partial H map} There exists a natural surjective homomorphism of $\mathcal{O}[\Pi]$-modules
 \[ \varrho_{F,\chi}:{\rm SS}^{\rm glob}_r(F/K,\chi) \to {\rm SS}_r(F/K,\chi).\]
\end{lemma}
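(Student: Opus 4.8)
The plan is to construct $\varrho_{F,\chi}$ level-by-level, exhibiting for each non-negative integer $m$ a natural map from the global Stark system module to the level-$m$ module ${\rm SS}_r(F/K,\chi)_m$ that is compatible with the transition morphisms $\pi_m$, and then to pass to the inverse limit. First I would note that reduction modulo $p^m$ furnishes, for each $\fn$ in $\cN_0$ that also lies in $\cN_m$ (the sets $\cP_m$ shrink as $m$ grows, so $\cN_m\subseteq\cN_0$), a natural map $U_{E,\fn,\chi}\to\cS_m^\fn$: indeed an element of $U_{E,\fn,\chi}=(\cO_{E,\Sigma_\fn}^\times)_\chi$ has trivial valuation at every prime $\fq\nmid\fn$ (it is an $\Sigma_\fn$-unit), so its image in $(E^\times/p^m)_\chi$ satisfies the defining condition $v_\fq(a)=0$ for all $\fq\nmid\fn$ and hence lies in $\cS_m^\fn$. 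Applying the exterior-power-bidual functor ${\bigcap}^{r+\nu(\fn)}$ and using functoriality of the bidual construction (cf. the discussion around (\ref{bidual-lattice-identification})) gives a map ${\bigcap}_{\cO[\Pi]}^{r+\nu(\fn)}U_{E,\fn,\chi}\to{\bigcap}_{\cO_m[\Pi]}^{r+\nu(\fn)}\cS_m^\fn$. The key compatibility to check is that these maps intertwine the two families of transition morphisms $v_{\fm,\fn}$: this is immediate because the valuation maps $v_\fq$ of (\ref{def v}) are, by construction, defined integrally on $E^\times$ and the mod-$p^m$ versions used in the Stark-system definition are literally their reductions, so the Koszul-type wedges ${\bigwedge}_{\fq\mid\fm/\fn}v_\fq$ agree after reduction.

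Passing to the inverse limit over $\fn\in\cN_m$ then yields, for each $m$, a natural homomorphism of $\cO[\Pi]$-modules
\[ {\rm SS}^{\rm glob}_r(F/K,\chi)=\varprojlim_{\fn\in\cN_0}{\bigcap}_{\cO[\Pi]}^{r+\nu(\fn)}U_{E,\fn,\chi}\longrightarrow \varprojlim_{\fn\in\cN_m}{\bigcap}_{\cO_m[\Pi]}^{r+\nu(\fn)}\cS_m^\fn={\rm SS}_r(F/K,\chi)_m, \]
where I use that $\cN_m$ is cofinal in itself inside $\cN_0$ in the appropriate sense so that restricting the limit is harmless. These maps are visibly compatible with the surjections $\pi_m$ (both sides are built from reduction maps $\cO_{m+1}\to\cO_m$), so they assemble into a single homomorphism
\[ \varrho_{F,\chi}:{\rm SS}^{\rm glob}_r(F/K,\chi)\longrightarrow\varprojlim_m{\rm SS}_r(F/K,\chi)_m={\rm SS}_r(F/K,\chi). \]

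It remains to prove surjectivity, and this is the main obstacle. The natural strategy is to fix a level $m$ and a single $\fn$ and first show that the map $U_{E,\fn,\chi}\to\cS_m^\fn$ is surjective: an element of $\cS_m^\fn$ is represented by some $a\in E^\times$ whose $\chi$-component has trivial valuation mod $p^m$ outside $\fn$; one adjusts $a$ by a suitable element whose valuations are divisible by $p^m$ — here one invokes the splitting hypotheses on the primes in $\cP_m$ (they split completely in $H_KE$, so the relevant class groups and $\mu_{p^m}$-obstructions vanish), exactly as in the standard Stark/Kolyvagin-system literature — to arrange that the representative is an honest $\Sigma_\fn$-unit whose image is the given class, then project to the $\chi$-component. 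Promoting this to surjectivity of the map on ${\bigcap}^{r+\nu(\fn)}$-modules uses the identification (\ref{bidual-lattice-identification}) together with the fact that these biduals are computed from the duals $\Hom$, which behave well under the reduction maps once one knows the underlying modules have the expected ranks (this is where (\ref{ord formula}) and the hypotheses (H$_1$)--(H$_5$) enter, guaranteeing $U_{E,\fn,\chi}$ has the right $\cO[\Pi]$-rank $r+\nu(\fn)$ and no $\chi$-torsion from roots of unity, by (H$_1$)). Finally, surjectivity on the inverse limits follows from a Mittag-Leffler argument: the transition maps $v_{\fm,\fn}$ on the global side are surjective by the same rank count, so the $\varprojlim^1$ terms vanish and level-wise surjectivity lifts to the limit; compatibility across $m$ via $\pi_m$ then gives surjectivity of $\varrho_{F,\chi}$ itself. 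The delicate points — and the places I would expect to spend the most care — are the precise bookkeeping of which $\fn$ lie in which $\cN_m$, and verifying that the reduction maps on exterior-power biduals are surjective, which is not formal and genuinely relies on the torsion-freeness and rank statements supplied by the running hypotheses.
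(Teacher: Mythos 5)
Your overall scaffolding — defining a level-$m$ map from ${\rm SS}^{\rm glob}_r(F/K,\chi)$ to ${\rm SS}_r(F/K,\chi)_m$ via reduction modulo $p^m$, checking compatibility with the $v_{\fm,\fn}$ and with the $\pi_m$, and passing to the inverse limit — matches what the paper does. But the real work is the surjectivity, and here the proposal has two genuine gaps.

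First, you attempt to prove that the reduction map $U_{E,\fn,\chi}\to\cS_m^\fn$ is surjective for a generic $\fn$ in $\cN_m$, appealing to an adjustment argument "as in the standard Stark/Kolyvagin-system literature." This surjectivity is in fact \emph{not} true for arbitrary $\fn$: it requires the $\chi$-part of the $\Sigma_\fn$-ray class group of $E$ to vanish. The paper handles this (Lemma \ref{cofinal}) by applying Chebotarev together with hypotheses (H$_1$) and (H$_3$) to produce a single auxiliary product $\fn_0$ with ${\rm Cl}(\mathcal{O}_{E,\Sigma_{\fn_0}})_\chi=0$, and then restricting attention to the cofinal subset $\cN_m'\subseteq\cN_m$ of multiples of $\fn_0$; only for $\fm\in\cN_m'$ is $\cS_m^\fm=(U_{E,\fm}/p^m)_\chi$. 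You vaguely gesture at the relevant splitting hypotheses but do not isolate the cofinal subset, and without it the level-wise surjectivity you need is simply false.

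Second, and more seriously, your promotion of surjectivity from the underlying modules to the modules ${\bigcap}^{r+\nu(\fn)}$ is not a valid argument. The exterior power bidual functor is not right exact, and "these biduals are computed from the duals $\Hom$, which behave well under the reduction maps" does not yield surjectivity. The decisive observation — supplied by Lemma \ref{cofinal}(i),(ii) — is that over the cofinal subset $\cN_m'$ both $U_{E,\fm,\chi}$ and $\cS_m^\fm$ are \emph{free} of rank $r+\nu(\fm)$ over $\cO[\Pi]$ and $\cO_m[\Pi]$ respectively, so that ${\bigcap}^{r+\nu(\fm)}$ coincides with ${\bigwedge}^{r+\nu(\fm)}$ on both sides, and surjectivity on exterior powers of a surjection of free modules of the same finite rank is then immediate. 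The bidual subtlety you worry about never arises, precisely because of this freeness. You flag the difficulty in your final paragraph but identify neither the cofinal-subset device nor the freeness statement that resolves it, so the proposal stops at exactly the point where the paper's argument begins.
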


\begin{proof} For each natural number $m$ we use the cofinal subset $\cN_m'$ of $\cN_m$ described in Lemma \ref{cofinal} below.

Then the latter result implies that for each $\fm$ in $\cN_m'$ there exists a natural surjective homomorphism of $\mathcal{O}[\Pi]$-modules
\[ {\bigcap}_{\cO[\Pi]}^{r+\nu(\fm)}U_{E,\fm,\chi}\! =\! {\bigwedge}_{\cO[\Pi]}^{r+\nu(\fm)} U_{E,\fm,\chi}\!\twoheadrightarrow \!{\bigwedge}_{\cO_m[\Pi]}^{r+\nu(\fm)}(U_{E,\fm,\chi}/(p^m))\! = \! {\bigwedge}_{\cO_m[\Pi]}^{r+\nu(\fm)}\cS_m^\fm\! = \! {\bigcap}_{\cO_m[\Pi]}^{r+\nu(\fm)}\cS_m^\fm \]
that are compatible with the respective transition morphisms $v_{\fm,\fn}$ as $\fn$ and $\fm$ vary over $\cN_m'$. This therefore gives a surjective homomorphism $\varrho_{F,\chi,m}$ of $\mathcal{O}[\Pi]$-modules from
 ${\rm SS}^{\rm glob}_r(F/K,\chi)$ to ${\rm SS}_r(F/K,\chi)_m$.

These homomorphisms $\varrho_{F,\chi,m}$ are compatible with the transition morphisms $\pi_m$ as $m$ varies and hence lead to a surjective homomorphism $\varrho_{F,\chi}$ of the required sort.\end{proof}

\begin{lemma}\label{cofinal} For each natural number $m$ there exists a cofinal subset $\cN'_m$ of $\cN_m$ such that for every product $\fm$ in $\cN'_m$ the following conditions are satisfied.
\begin{itemize}
\item[(i)] The $\cO[\Pi]$-module $U_{E,\fm,\chi}$ is free of rank $r + \nu(\fm)$.
\item[(ii)] The $\co_m[\Pi]$-module $\cS_m^\fm$ is equal to $(U_{E,\fm}/p^m)_\chi$ and is free of rank $r + \nu(\fm)$.
\end{itemize}
\end{lemma}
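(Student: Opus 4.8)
The plan is to reduce both parts of the lemma to a single vanishing statement about class groups, to deduce (i) and (ii) from it by a short homological argument, and to establish that vanishing by a Chebotarev argument (which will be the main obstacle). Concretely, I claim it suffices to produce a finite subset $\Sigma_0\subseteq\cP_m$ such that the ideal classes of the primes of $E$ lying above the primes in $\Sigma_0$ generate $\Cl_S(E)_\chi$, where $S:=S_\infty(K)$. Indeed, set $\cN'_m:=\{\fm\in\cN_m : \fq\mid\fm\text{ for all }\fq\in\Sigma_0\}$. This is cofinal in $\cN_m$, since every $\fn\in\cN_m$ divides the (square-free) product of all primes occurring in $\fn$ or in $\Sigma_0$, and that product lies in $\cN'_m$. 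For $\fm\in\cN'_m$ one has $\Cl_{\Sigma_\fm}(E)=\Cl_S(E)\big/\langle[\fQ] : \fQ\mid\fm\rangle$, so the choice of $\Sigma_0$ forces $\Cl_{\Sigma_\fm}(E)_\chi=0$, and hence also $\Cl_{\Sigma}(E)_\chi=0$ for $\Sigma:=\Sigma_\fm\cup S_\ram(E/K)$, this last being a quotient. I now show this vanishing yields (i) and (ii).

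For (i): every prime dividing $\fm$ lies in $\cP_m$ and so splits completely in $E$, while (H$_4$) together with the oddness of $p$ shows that every archimedean place of $K$ splits completely in $E$; hence $Y_{E,\Sigma_\fm}$ is $\ZZ[\cG_E]$-free of rank $|\Sigma_\fm|=r+\nu(\fm)$. For $v\in S_\ram(E/K)$ the decomposition group $\cG_{E,v}$ is the direct product of its images in $\Delta$ and in $\Pi$ (their orders being coprime, as $|\Delta|$ is prime to $p$ and $|\Pi|$ is a $p$-power), so $\cG_{E,v}\cap\Delta$ is the decomposition group of $v$ in $L/K$, which is non-trivial --- automatically if $v$ ramifies in $L$, and by (H$_3$) if $v$ ramifies in $F$; since $\chi$ is faithful on $\Delta$, the $\chi$-component of $\ZZ[\cG_E/\cG_{E,v}]$ therefore vanishes. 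Feeding this, together with $\Cl_{\Sigma_\fm}(E)_\chi=0$, into the tautological exact sequence relating $\mathcal{O}^\times_{E,\Sigma_\fm}$ and $\mathcal{O}^\times_{E,\Sigma}$, we obtain $U_{E,\fm,\chi}=(\mathcal{O}^\times_{E,\Sigma})_\chi$ and $Y_{E,\Sigma,\chi}=Y_{E,\Sigma_\fm,\chi}$; and since $\chi\neq 1$ the augmentation has trivial $\chi$-component, so $X_{E,\Sigma,\chi}=Y_{E,\Sigma,\chi}$ is $\cO[\Pi]$-free of rank $r+\nu(\fm)$. Now apply the (exact) $\chi$-component functor to the perfect complex $C_{E,\Sigma}$ of Proposition \ref{new one}: the result is a perfect complex of $\cO[\Pi]$-modules, acyclic outside degrees $0$ and $1$, with $H^0=U_{E,\fm,\chi}$ and, since $\Cl_{\Sigma}(E)_\chi=0$, with $H^1=X_{E,\Sigma,\chi}$ free. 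As $H^1$ is projective, the canonical triangle $H^0[0]\to C_{E,\Sigma,\chi}\to H^1[-1]\to$ splits in the derived category (its connecting map lies in $\Ext^2_{\cO[\Pi]}(H^1,H^0)=0$), so $U_{E,\fm,\chi}[0]$ is a direct summand of a perfect complex and hence $U_{E,\fm,\chi}$ has finite projective dimension over $\cO[\Pi]$. On the other hand (H$_1$) gives $(\mu_E)_\chi=0$, so $U_{E,\fm,\chi}$ is $\cO$-torsion-free; and a module over $\cO[\Pi]$ --- with $\cO$ a complete discrete valuation ring of residue characteristic $p$ and $\Pi$ a $p$-group --- that is both $\cO$-free and of finite projective dimension is free (equivalently: $\cO$-free and cohomologically trivial $\Rightarrow$ free, via the Auslander--Buchsbaum formula over the one-dimensional Cohen--Macaulay ring $\cO[\Pi]$). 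Its rank is $r+\nu(\fm)$ because $C_{E,\Sigma}$ has Euler characteristic zero, by Dirichlet's unit theorem. This proves (i).

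For (ii): by part (i) the module $(U_{E,\fm}/p^m)_\chi=U_{E,\fm,\chi}/p^m$ is free of rank $r+\nu(\fm)$ over $\co_m[\Pi]=\cO[\Pi]/p^m$, so it remains to identify it with $\cS_m^\fm$. The natural map $(U_{E,\fm}/p^m)_\chi\to(E^\times/p^m)_\chi$ has image inside $\cS_m^\fm$, since an element of $\mathcal{O}^\times_{E,\Sigma_\fm}$ has trivial valuation at every finite prime off $\Sigma_\fm$; it is injective because $u=\beta^{p^m}$ with $u\in U_{E,\fm}$, $\beta\in E^\times$ forces $\beta\in U_{E,\fm}$ (compare valuations outside $\Sigma_\fm$) and because the $\chi$-component functor is exact. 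For surjectivity onto $\cS_m^\fm$ one uses the standard exact sequence $0\to(U_{E,\fm}/p^m)_\chi\to\cS_m^\fm\to\Cl_{\Sigma_\fm}(E)[p^m]_\chi$, in which an $\alpha\in E^\times$ representing a class in $\cS_m^\fm$ --- so $\ord_\fQ(\alpha)\equiv 0\pmod{p^m}$ for all $\fQ\nmid\fm$ --- is sent to the class of the fractional ideal whose $p^m$-th power is the prime-to-$\fm$ part of $(\alpha)$; as $\Cl_{\Sigma_\fm}(E)[p^m]_\chi$ is a subgroup of $\Cl_{\Sigma_\fm}(E)_\chi=0$, the first map is onto. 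Hence $\cS_m^\fm=(U_{E,\fm}/p^m)_\chi$, completing (ii).

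It remains to construct $\Sigma_0$, which is the crux. By the Chebotarev density theorem, the classes in $\Cl_S(E)$ realised by primes of $E$ above primes of $\cP_m$ are precisely those in the image of $\Gal\big(H_E^{S}/(H_E^{S}\cap M)\big)$, where $H_E^{S}$ is the abelian class field of $E$ with $\Gal(H_E^{S}/E)\cong\Cl_S(E)$ and $M:=H_KE(\mu_{p^m},(\mathcal{O}_K^\times)^{p^{-m}})$; so it suffices to prove $\Gal\big((H_E^{S}\cap M)/E\big)_\chi=0$. Since $H_KE(\mu_{p^m})/K$ is abelian, $\cG_E$ acts trivially on $\Gal(H_KE(\mu_{p^m})/E)$, whose $\chi$-component therefore vanishes as $\chi\neq 1$; the only possible contribution thus comes from the Kummer part $\Gal(M/H_KE(\mu_{p^m}))$, and the vanishing of its relevant $\chi$-eigenspace is exactly the input --- resting on hypotheses (H$_1$), (H$_2$) and (H$_5$), in particular the exclusion $\chi^2\neq\omega$ when $p=3$ --- that governs the selection of Kolyvagin primes in \cite{bss3}, and which I would invoke directly. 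Granting it, Chebotarev produces for each member of a finite generating set of $\Cl_S(E)_\chi$ a prime of $\cP_m$ realising it, and those primes constitute $\Sigma_0$. The genuinely delicate point is this last $\chi$-eigenspace computation; the homological argument above is routine once the class-group vanishing is in hand.
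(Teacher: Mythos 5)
Your overall structure---produce a set $\Sigma_0$ of primes in $\cP_m$ whose classes generate $\Cl_S(E)_\chi$, take $\cN'_m$ to be the multiples of their product, and deduce (i) and (ii) from the resulting vanishing of $\Cl(\cO_{E,\Sigma_\fm})_\chi$---is exactly the paper's strategy, and your homological deductions of (i) and (ii) from that vanishing are sound (indeed somewhat more explicit than the paper's terse version, which compresses the cohomological-triviality step into a single sentence). Your reduction of the existence of $\Sigma_0$ to the statement $\Gal\bigl((H_E^S\cap M)/E\bigr)_\chi=0$ is also the right move.

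The genuine gap is that this last statement is precisely what the lemma requires you to \emph{prove}, and you instead stop and say you ``would invoke'' it from \cite{bss3}. This is the crux of the entire lemma and cannot simply be cited away here: the relevant field $M=H_KE_m((\cO_K^\times)^{p^{-m}})$ and the field $E=LF$ are specific to the present setup, and the authors give a direct argument rather than a quotation. That argument is short but genuinely necessary, and it is worth knowing: working with $H_E^\chi$ (the subfield of the Hilbert class field of $E$ cut out by $\Cl(\cO_E)_\chi$), one sets $D=(H_E^\chi)_m\cap M$ and observes that the conjugation action of $\Delta':=\Gal(E_m/F_m)$ on $\Gal(D/E_m)$ must be simultaneously trivial (because $D\subseteq M$ and $M/F_m$ is abelian, being a compositum of abelian extensions of $F_m$) and via a non-trivial character (since $D\subseteq(H_E^\chi)_m$ and, by (H$_1$), the restriction of $\chi$ to $\Delta'$ is non-trivial because $L\not\subseteq K(\mu_p)$ forces $L\not\subseteq F_m$). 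Hence $D=E_m$, and a second application of the same trick at the level of $\Delta$ acting on $\Gal(H_E^\chi\cap E_m/E)$ gives $H_E^\chi\cap M=E$, after which Chebotarev produces $\Sigma_0$.

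A secondary issue: you assert that this vanishing ``rests on hypotheses (H$_1$), (H$_2$) and (H$_5$), in particular the exclusion $\chi^2\neq\omega$.'' In fact the paper's proof of this lemma uses only (H$_1$) for the Chebotarev/Kummer-theoretic step (plus (H$_3$) later, to kill the $\chi$-components of $Y_v$ for ramified $v$, as you also note). Hypothesis (H$_2$) plays no role here --- it is needed elsewhere in the Kolyvagin-system machinery, not in this class-group computation --- so invoking it is a sign that the cited black box from \cite{bss3} is not quite the statement you actually need.
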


\begin{proof} For each number field $N$ we set $N_m := N(\mu_{p^m})$. We also write $H_E^\chi$ for the subfield of the Hilbert class field of $E$ that corresponds to the image of ${\rm Cl}(\mathcal{O}_E)$ in ${\rm Cl}(\mathcal{O}_E)_\chi$.

We claim that $H_E^\chi\cap H_KE_m((\cO_K^\times)^{p^{-m}}) = E$ and to prove this we shall use the following diagram of fields.

\[
\xymatrix{
& & (H_{E}^\chi)_m & & H_KE_m((\cO_K^\times)^{p^{-m}})\\
H_E^\chi\ar@{-}[dr] \ar@{-}[rru] & & & E_m\ar@{-}[dr]^{\Delta'} \ar@{-}[ul] \ar@{-}[ru]\\
& E\ar@{-}[dr]^{\Delta}\ar@{-}[rru] & & & F_m\\
L \ar@{-}[ur]^{\Pi} \ar@{-}[dr]^{\Delta} & & F \ar@{-}[urr]\\
& K \ar@{-}[ur]^{\Pi}}\]

We set $D := (H_E^\chi)_m\cap H_KE_m((\cO_K^\times)^{p^{-m}})$ and at the outset consider the conjugation action of $\Delta'$ on the abelian group $\Gal(D/E_m)$.

In fact, since $D$ is contained in $H_KE_m((\cO_K^\times)^{p^{-m}})$ it is abelian over $F_m$ and so this conjugation action must be trivial.

However, since $H_E^\chi/E$ is of $p$-power degree, $\Gal(D/E_m)$ is a $p$-group and so identifies with a subset of the module $\mathcal{O}\otimes_\ZZ \Gal(D/E_m)$ upon which the conjugation action of $\Delta'$ is via the restriction $\chi'$ of $\chi$ (as $D \subseteq (H_{E}^\chi)_m$).

In particular, since $\Delta'$, and hence also $\chi'$, is non-trivial (as $L\nsubseteq K(\mu_p)$ by hypothesis (H$_1$)), the group $\Gal(D/E_m)$ must be trivial and so $(H_E^\chi)_m\cap H_KE_m((\cO_K^\times)^{p^{-m}}) = E_m$.

This fact implies that $H_E^\chi\cap H_KE_m((\cO_K^\times)^{p^{-m}})$ is equal to the field $D' := H_E^\chi\cap E_m$.

Then the same approach as above shows that $D' = E$ since the conjugation action of $\Delta$ on $\Gal(D'/E)$ is trivial (as $D'\subseteq E_m$) whilst $\Gal(D'/E)$ also embeds into an $\mathcal{O}$-module upon which $\Delta$ acts via the non-trivial character $\chi$.

We have now shown that $H_E^\chi\cap H_KE_m((\cO_K^\times)^{p^{-m}}) = E$ and so, by Chebotarev's Density Theorem, we can choose a set of primes $\fn$ in $\cN_m$ with the property that
${\rm Cl}(\mathcal{O}_{E,\Sigma_{\fn}})_\chi$ is trivial.

We claim that the cofinal subset $\cN_m'$ of $\cN_m$ comprising multiples of $\fn$ has both of the stated properties (i) and (ii).

To prove this we fix $\fm$ in $\cN_m'$. Then ${\rm Cl}(\mathcal{O}_{E,\Sigma_{\fm}})_\chi$ is trivial and so Proposition \ref{new one}(i), with $\Sigma = S_{\rm ram}(E/K) \cup \Sigma_\fm$ and $T$ empty, implies that $H^0(C_{E,\Sigma})_\chi = (\mathcal{O}^\times_{E,\Sigma})_\chi = (U_{E,\fm})_\chi$ and that $H^1(C_{E,\Sigma})_\chi = X_{E,\Sigma,\chi} = Y_{E,\Sigma_\fm,\chi}$ is a free $\cO[\Pi]$-module of rank $r + \nu(\fm)$. (Here we used Hypothesis (H$_3$).)

Since $(C_{E,\Sigma})_\chi$ belongs to $D^{\rm perf}(\cO[\Pi])$ these facts combine to imply that $(U_{F,\fm})_\chi$ is free of rank $r + \nu(\fm)$, as required to prove claim (i).

Claim (ii) then follows since the vanishing of ${\rm Cl}(\mathcal{O}_{E,\Sigma_{\fm}})_\chi$ implies that the natural inclusion $(U_{E,\fm})_\chi/(p^m) = (U_{E,\fm}/p^m)_\chi\to \cS_m^\fm$ is bijective.
\end{proof}

\subsubsection{}Lemma \ref{partial H map} implies that to construct a surjective homomorphism of $\cO[[\cG_{L\cK}]]$-modules $\Delta^\chi_{F,r}: {\rm VS}_{L\cK}^{(p)} \to {\rm SS}_r(F/K,\chi)$ it is enough to construct a surjective map of $\cO[[\cG_{L\cK}]]$-modules $\Delta^\chi_{F,r,1}: {\rm VS}_{L\cK}^{(p)}\to {\rm SS}^{\rm glob}_r(F/K,\chi)$, and then to define $\Delta^\chi_{F,r}$ to be $\varrho_{F,\chi}\circ\Delta^\chi_{F,r,1}$. To do this we shall mimic the construction of `horizontal determinant systems' from \cite[\S3.3]{sbA}.

For each $\fq$ in $\cP_0$ we write $K(\fq)/K$ for the maximal $p$-extension of $K$ inside its ray class field modulo $\fq$. For each $\fn$ in $\cN_0$ we write $E(\fn)$ for the compositum $E\prod_{\fq \mid \fn} K(\fq)$ and set $S_\fn:= S(E(\fn)) = S_{\rm ram}(E/K) \cup \Sigma_\fn$.

For each $\fn$ in $\cN_0$ we then define $\Delta_\fn$ to be the composite homomorphism
\begin{equation*}\label{delta def} \Det_{\mathcal{G}_{E(\fn)}}(C_{E(\fn),S_\fn})_\chi \to \Det_{\cO[\Pi]}((C_{E,S_\fn})_\chi) \xrightarrow{\Theta_{E,\fn}}{\bigcap}_{\cO[\Pi]}^{r+\nu(\fn)} U_{E,\fn,\chi},\end{equation*}
where the first map is the natural projection and $\Theta_{E,\fn}$ is defined just as in (\ref{rational-projection-map}) but with $S(E)$ replaced by $S_\fn$ and with respect to a fixed choice of $\mathcal{O}[\Pi]$-basis of the free module
$Y_{E,\Sigma_\fn,\chi}$. In particular, the fact that $\Theta_{E,\fn}$ maps $\Det_{\cO[\Pi]}((C_{E,S_\fn})_\chi)$ to ${\bigcap}_{\cO[\Pi]}^{r+\nu(\fn)} U_{E,\fn,\chi}$ follows by an application of \cite[Prop. A.11(ii)]{sbA}, just as in \S\ref{proof tech2}.

We write $\Delta_\fn'$ for the composite of $\Delta_\fn$ and the natural (surjective) projection map from ${\rm VS}_{L\cK}^{(p)}$ to $\Det_{\mathcal{G}_{E(\fn)}}(C_{E(\fn),S_\fn})_\chi$.

Then, provided that one makes a compatible choice of bases of the modules $Y_{E,\Sigma_\fn,\chi}$, these maps $\Delta'_\fn$ are compatible with the transition morphisms  $v_{\fm,\fn}$ as $\fn$ varies over $\cN_0$ and so lead to the construction of a map $\Delta_{F,r,1}^\chi$.

In addition, since the modules $\ker(\Delta'_\fn)$ are compact the derived limit $\varprojlim^1_{\fn \in \cN_0}\ker(\Delta'_\fn)$ vanishes.

To prove that $\Delta_{F,r,1}^\chi$, and hence also $\Delta_{F,r}^\chi$, is surjective it is therefore enough to prove that the maps $\Delta_\fn$ are  surjective for all $\fn$ in the cofinal subset $\cN_0'$ of $\cN_0$ comprising products $\fm$ for which ${\rm Cl}(\mathcal{O}_{E,\Sigma_\fm})_{\chi}$ vanishes.

This is true since for all such $\fn$ the argument of Lemma \ref{cofinal}(i) shows the $\cO[\Pi]$-modules $H^0((C_{E,S_\fn})_\chi) = U_{E,\fn,\chi}$ and $H^1((C_{E,S_\fn})_\chi) = Y_{E,\Sigma_\fn,\chi}$ to be free of rank $r + \nu(\fn)$ and hence that $\Theta_{E,\fn}$ coincides with the natural `passage-to-cohomology' isomorphism

\[ \Det_{\cO[\Pi]}((C_{E,S_\fn})_\chi) \cong \left({\bigwedge}_{\cO[\Pi]}^{r+\nu(\fn)} U_{E,\fn,\chi}\right)\otimes \left({\bigwedge}_{\cO[\Pi]}^{r+\nu(\fn)} Y^\ast_{E,\Sigma_\fn,\chi}\right) \cong {\bigwedge}_{\cO[\Pi]}^{r+\nu(\fn)} U_{E,\fn,\chi} \]
that exists in this case.

\subsubsection{}To complete the proof of Theorem \ref{key diagram} it suffices to check the diagram commutes.

To do this, for each $\fn$ in $\cN_0$ we set $\cG_\fn:=\Gal(E(\fn)/K)$ and $\cH_\fn:=\Gal(F(\fn)/F)$ and write $I_\fn$ for the kernel of the augmentation map $\ZZ[\cH_\fn] \to \ZZ$.

For each product $\fn$ in $\cN_0$ and each prime $\fq$ in $\cP_0$ we define a map $\varphi_\fq^\fn$ as follows:
$$
\varphi_\fq^\fn: E^\times \to \ZZ[\cG_E]\otimes_\ZZ I_\fn/I_\fn^2  ;\ a \mapsto \sum_{\sigma \in  \cG_E}\sigma^{-1} \otimes ({\rm rec}_\fQ(\sigma a)-1),
$$
where $\fQ$ is a fixed place of $E$ lying above $\fq$ and ${\rm rec}_\fQ: E^\times  \to \Gal(E(\fn)/E)\cong \cH_\fn$ is the local reciprocity map at $\fQ$. This map induces a map $
U_{E,\Sigma_\fn,\chi} \to \cO[\Pi] \otimes_\ZZ I_\fn/I_\fn^2$, that we continue to denote by $\varphi_\fq^\fn$.

Upon combining these maps for each prime divisor $\fq$ of $\fn$ we obtain a map of $\cO[\Pi]$-modules
\[ {\bigwedge}_{\fq \mid \fn}\varphi_\fq^\fn :  {\bigcap}_{\cO[\Pi]}^{r+\nu(\fn)} U_{E,\Sigma_\fn,\chi}  \to {\bigcap}_{\cO[\Pi]}^{r} U_{E,\Sigma_\fn,\chi} \otimes I_\fn^{\nu(\fn)}/I_\fn^{\nu(\fn)+1}.\]

Finally we recall (from, for example, \cite[(4.1.1)]{bss3}) that there exists a natural injective homomorphism
\begin{eqnarray}\label{iota}
\iota_\fn: {\bigcap}_{\cO[\Pi]}^{r} U_{E,\Sigma_\fn,\chi} \otimes_\ZZ I_\fn^{\nu(\fn)}/I_\fn^{\nu(\fn)+1} \hookrightarrow {\bigcap}_{\cO[\cG_\fn]}^{r} U_{E(\fn),\Sigma_\fn,\chi} \otimes_\ZZ \ZZ[\cH_\fn]/ I_\fn^{\nu(\fn)+1}.
\end{eqnarray}

Then, after unwinding the definitions of the maps $\Theta_{L\cK}^\chi$ and $\Delta^\chi_{F,r}$, the argument that is used to prove \cite[Th. 4.13]{bss3} shows that the square in Theorem \ref{key diagram} is commutative if and only if for all products $\fn$ in $\cN_0$ and all elements $b_\fn$ of $\Det_{\mathcal{G}_{E(\fn)}}(C_{E(\fn),S_\fn})$ one has
$$\sum_{\sigma \in \cH_n} \sigma \left(\Theta_{E(\fn)}(b_{\fn})^\chi\right) \otimes \sigma^{-1} = \iota_\fn\left(\left({\bigwedge}_{\fq \mid \fn}\varphi_\fq^\fn \right)(\Delta_\fn(b_{\fn}^\chi))\right)$$
in the module ${\bigcap}_{\cO[\cG_\fn]}^r U_{E(\fn),\Sigma_\fn,\chi}\otimes \ZZ[\cH_\fn]/\cI_\fn^{\nu(\fn)+1}$.

To complete the proof of Theorem \ref{key diagram}, it is thus enough to note that latter equality follows directly from the argument used in \cite[\S5.7]{bks1} to prove \cite[Th. 5.16]{bks1}.

\section{The case $K=\QQ$}

In this section we consider the special case $K = \QQ$ and will, in particular, prove Theorems \ref{Q theorem} and \ref{nc thm}.

Throughout we abbreviate the rings $\cR_\QQ$ and $\cR^{\rm s}_\QQ$ and ideals $\cA_\QQ$ and $\cA^{\rm s}_\QQ$ to
$\cR$, $\cR^{\rm s}$, $\cA$ and $\cA^{\rm s}$ respectively.

We also set $S:=S_\infty(\QQ)$ so that for any finite abelian extension $E$ of $\QQ$ the set $S(E)$ comprises the archimedean place of $\QQ$ together with all rational primes that ramify in $E$.

For each natural number $m$ we 
set $\QQ(m) := \QQ(\mu_m)$ and denote its maximal real subfield by $\QQ(m)^+$.

We write $\mathbb{N}^\dagger$ for the subset of $\mathbb{N}\setminus \{1\}$ comprising all numbers that are not congruent to $2$ modulo $4$ and $\mathbb{N}^\ddag$ for the subset $\mathbb{N}^\dagger\setminus \{3,4\}$ of $\mathbb{N}^\dagger$ comprising numbers $m$ for which $\QQ(m)^+\not= \QQ$.

\subsection{}In this subsection we establish some essential preliminary results.

\subsubsection{}The first such result gives an explicit description of the image of the homomorphism $\Theta_\QQ^{\rm s}$ defined in \S\ref{somr section}.

Before stating this result we note the Kronecker-Weber Theorem implies that the set of fields $\{\QQ(m)^+\}_{m\in \mathbb{N}^\ddag}$ is cofinal in the set $\Omega^+:= \Omega(\QQ^{\rm s}/\QQ)$ of non-trivial finite real abelian extensions of $\QQ$ in $\QQ^c$.

The distribution relations satisfied by cyclotomic units therefore imply that there exists a unique system $\varepsilon$ in ${\rm RES}_\QQ$ with the property that
\[ \varepsilon_{\QQ(m)^+} =  (1-\zeta_m)^{(1+\tau)/2}\]
for every $m$ in $\mathbb{N}^\ddag$, 
where $\tau$ denotes complex conjugation.

\begin{theorem}\label{etnc prop} The $\mathcal{R}^{\rm s}$-module ${\rm ES}^{\rm b}_\QQ$ is free of rank one, with basis $\varepsilon$.
\end{theorem}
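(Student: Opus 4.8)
The plan is to exploit the description of $\mathrm{VS}(\QQ^{\rm s}/\QQ)$ provided by Proposition \ref{invert prop} together with the surjectivity of the maps $\Theta_E$. First I would recall that, by definition, $\mathrm{ES}^{\rm b}_\QQ = \im(\Theta_\QQ^{\rm s})$ is a quotient of $\mathrm{VS}(\QQ^{\rm s}/\QQ)$, and that the latter module is, by Proposition \ref{invert prop}, an `invertible' $\mathcal{R}^{\rm s}$-module in the sense that its pro-$p$ completion is free of rank one over $\widehat{\mathcal{R}^{\rm s}}^p$ for every $p$. Since $\QQ^{\rm s}=\QQ^{\rm ab,+}$ is the maximal real abelian extension and $\Gal(\QQ^{\rm s}/\QQ)$ is topologically cyclic-by-finite (indeed $\widehat{\mathbb{Z}}^\times/\{\pm 1\}$), the key structural input is that $\mathcal{R}^{\rm s}$ is a product of well-behaved local rings after completion, so an invertible module over it with free pro-$p$ completions for all $p$ is in fact free of rank one over $\mathcal{R}^{\rm s}$; I would either invoke this directly or prove it by the same Mittag-Leffler/Nakayama tower argument used in the proof of Proposition \ref{invert prop}, now carried out integrally over $\QQ$ rather than just pro-$p$. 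Fix a generator $z$ of $\mathrm{VS}(\QQ^{\rm s}/\QQ)$.

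Next I would identify $\Theta_\QQ^{\rm s}(z)$ with $\varepsilon$ up to a unit. For this I would compute $\Theta_{\QQ(m)^+}(z_{\QQ(m)^+})$ for $m \in \NN^\ddag$ and compare with $(1-\zeta_m)^{(1+\tau)/2}$. The complex $C_{\QQ(m)^+,S}$ has cohomology $H^0 = \mathcal{O}_{\QQ(m)^+,S}^\times$ and $H^1_{\mathrm{tf}} = X_{\QQ(m)^+,S}$ with $H^1_{\mathrm{tor}} = \Cl_S(\QQ(m)^+)$, and the leading-term formalism underlying $\Theta$ (via the passage-to-cohomology map and the idempotent $e_{(E)}$) sends a determinantal basis to a generator of the relevant lattice $\bidual^{r}_{\cG_E}\mathcal{O}_{E,S(E)}^\times$ up to the order of the class group and the Euler factors — but by the validity of the equivariant Tamagawa number conjecture for Tate motives over abelian fields (equivalently, the theorem of Mazur–Wiles / the main conjecture, or the formulation in \cite{bks1}), the cyclotomic element $(1-\zeta_m)^{(1+\tau)/2}$ is itself, up to a unit of $\mathcal{R}^{\rm s}$, a $\mathbb{Z}[\cG_{\QQ(m)^+}]$-generator of the image of $\Det_{\cG_{\QQ(m)^+}}(C_{\QQ(m)^+,S})$ under $\Theta_{\QQ(m)^+}$. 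Hence $\varepsilon$ and $\Theta_\QQ^{\rm s}(z)$ differ by a unit in $\mathcal{R}^{\rm s}$, which shows $\varepsilon$ generates $\mathrm{ES}^{\rm b}_\QQ$.

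Finally I would verify freeness: since $\mathrm{VS}(\QQ^{\rm s}/\QQ)$ is free of rank one and $\Theta_\QQ^{\rm s}$ is surjective onto $\mathrm{ES}^{\rm b}_\QQ$, it remains only to check that $\Theta_\QQ^{\rm s}$ is injective, equivalently that $\varepsilon$ is not $\mathcal{R}^{\rm s}$-torsion. This follows because for infinitely many $m$ the element $(1-\zeta_m)^{(1+\tau)/2}$ has infinite order in $\mathcal{O}_{\QQ(m)^+,S}^\times$ and $\mathrm{ord}_{s=0}L_S(\chi,s)=1$ for the relevant characters (using \eqref{ord formula} with $r=r_\QQ=1$), so that $e_\chi(\varepsilon_E)\neq 0$ for a cofinal family of characters; an $\mathcal{R}^{\rm s}$-torsion element would have to vanish under almost all such $e_\chi$, a contradiction. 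The main obstacle I anticipate is the second step: pinning down precisely that $\Theta$ sends a determinantal generator to a cyclotomic-unit generator requires invoking the known cases of the eTNC (or the analytic class number formula in its equivariant/leading-term form) rather than a purely formal argument, and one must be careful to track the $2$-part, where the idempotent $e_{(E)}$ and the factor $(1+\tau)/2$ interact with the $2$-torsion subtleties flagged in Remark \ref{not classical} and the remark following Conjecture \ref{main conj}.
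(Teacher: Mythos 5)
Your proposal contains a genuine gap in the first step. You propose to establish that $\VS(\QQ^{\rm s}/\QQ)$ is free of rank one over $\mathcal{R}^{\rm s}$ by a purely algebraic argument, either directly from Proposition \ref{invert prop} (freeness of the pro-$p$ completions) or by rerunning the Mittag-Leffler/Nakayama tower argument ``integrally over $\QQ$ rather than just pro-$p$.'' Neither works. The Nakayama step in the proof of Proposition \ref{invert prop} relies crucially on the fact that, over $\ZZ_p[\cG_L]$, the ideal $I(\mathcal{P})$ generated by $g-1$ for $g$ in the $p$-Sylow part lies in the Jacobson radical; this has no analogue over $\ZZ[\cG]$, where the Jacobson radical is trivial for nontrivial abelian $\cG$. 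More fundamentally, it is not true in general that a module over a ring like $\mathcal{R}^{\rm s}=\ZZ[[\Gal(\QQ^{\rm s}/\QQ)]]$ whose pro-$p$ completions are all free of rank one is itself free of rank one, so you cannot simply ``invoke this directly.'' What the paper actually does is bypass abstract freeness altogether: it uses the known validity of the equivariant Tamagawa Number Conjecture for real abelian fields (Burns--Greither, Flach) to manufacture an \emph{explicit} element $\mathfrak{z}_E \in \Det_{\cG_E}(C_{E,S(E)})$ at each level (the preimage of the equivariant $L$-value $\theta_{E,S(E)}^*(0)$ under the regulator trivialization), and the content of Lemma \ref{etnc basis} is precisely that these $\mathfrak{z}_E$ are compatible under the transition maps $\nu_{E'/E}$, which requires a nontrivial Euler-factor computation with the regulators. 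This compatibility is what makes $\mathfrak{z}=(\mathfrak{z}_E)_E$ a well-defined basis of the inverse limit; your proposal never addresses how to choose a coherent family of generators.

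A secondary but related issue: your second step only identifies $\Theta_\QQ^{\rm s}(z)$ with $\varepsilon$ ``up to a unit,'' which suffices for generation but requires extra care to be made uniform across all levels $E$; the paper instead computes $\Theta_{\QQ(m)^+}(\mathfrak{z}_{\QQ(m)^+}) = (1-\zeta_m)^{(1+\tau)/2}$ \emph{exactly}, using the classical formula for $L'_{S(m)}(\chi,0)$ in terms of logarithms of cyclotomic units, which is cleaner. Your injectivity argument via non-vanishing of $e_\chi(\varepsilon_E)$ is in the right spirit, but the paper simply invokes \cite[Lem. 5.4]{bs}, and your version would need care since $e_\chi(\varepsilon_E)$ does vanish for all $\chi$ with $\mathrm{ord}_{s=0}L_{S(E)}(\chi,s) > 1$, so you would have to argue that the remaining characters are still constraining enough. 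Once the first gap is repaired by following the paper's eTNC-based construction of $\mathfrak{z}$, the rest of your outline falls into place.
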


\begin{proof} For each $E$ in $\Omega^+$ we write $\vartheta_{E}$ for the composite isomorphism of $\RR[\cG_E]$-modules
\begin{align*}
    \RR \otimes_\ZZ \Det_{\cG_E}(C_{E,S(E)}) &\xrightarrow{\sim} \Det_{\RR[\cG_E]}(\RR \otimes_\ZZ \cO_{E,S(E)}^\times) \otimes_{\RR[\cG_E]} \Det_{\RR[\cG_E]}^{-1}(\RR \otimes_\ZZ X_{E,S(E)})\\
    &\xrightarrow{\sim} \Det_{\RR[\cG_E]}(\RR \otimes_\ZZ X_{E,S(E)}) \otimes_{\RR[\cG_E]} \Det_{\RR[\cG_E]}^{-1}(\RR \otimes_\ZZ X_{E,S(E)})\\
    &\xrightarrow{\sim} \RR[\cG_E].
\end{align*}
Here the first arrow is induced by the descriptions in Proposition \ref{new one}(i) and the natural passage-to-cohomology map, the final arrow is the canonical evaluation map and the second arrow is induced by the canonical Dirichlet regulator isomorphism
\begin{equation}\label{dir iso} \lambda_E:\RR \otimes_\ZZ \cO_{E,S(E)}^\times \cong \RR \otimes_\ZZ X_{E,S(E)}\end{equation}
that sends each $u$ in $\cO_{E,S(E)}^\times$ to $-\sum_{w}{\rm log} (|u|_w)\cdot w$, where in the sum $w$ runs over all places of $E$ above those in $S(E)$ and $|-|_w$ denotes the absolute value with respect to $w$ (normalized as in \cite[Chap. 0, 0.2]{tate}).

We then write $\mathfrak{z}_{E}$ for the pre-image under $\vartheta_{E}$ of the element
\[ \theta_{E,S(E)}^*(0) := \sum_{\chi\in \widehat{\cG_E}} L^\ast_{S(E)}(\chi^{-1}, 0)e_\chi\]
of $\RR[\cG_E]^\times$, where $L^\ast_{S(E)}(\chi^{-1}, 0)$ denotes the leading term in the Taylor expansion at $s=0$ of the series $L_{S(E)}(\chi^{-1}, s)$.

Then, by Lemma \ref{etnc basis} below, the collection $\mathfrak{z} := (\mathfrak{z}_{E})_{E \in \Omega^+}$ is an $\cR^{\rm s}$-basis of $\VS(\QQ^{\rm s}/\QQ)$. In addition, by the argument of \cite[Lem. 5.4]{bs}, one knows that the map $\Theta_\QQ^{\rm s}$ is injective.

Hence, the claimed result will follow if we can show for each $m$ in $\mathbb{N}^\ddag$ the element $\Theta_\QQ^{\rm s}(\mathfrak{z})_{\QQ(m)^+} = \Theta_{\QQ(m)^+}(\mathfrak{z}_{\QQ(m)^+})$ is equal to $(1-\zeta_m)^{(1+\tau)/2}$.

To do this we fix such an $m$ and set $S(m) := S(\QQ(m)) = S(\QQ(m)^+)$ and $\cG_m := \cG_{\QQ(m)^+}$. We then fix an embedding $j: \QQ^c \to \CC$ and recall (from, for example \cite[Chap. 3, \S5]{tate}) that for each $\chi$ in $\widehat{\cG_m}$ the first derivative $L_{S(m)}'(\chi, s)$ is holomorphic at $s=0$ and such that
 \[ L_{S(m)}'(\chi, 0) = -\frac{1}{2}\sum_{\sigma \in \mathcal{G}_{m}} \chi(\sigma)\log|(1-\zeta_m^{\sigma})^{1+\tau}|
    \]
where both $\chi(\sigma)$ and $1-\zeta_m^{\sigma}$ are regarded as complex numbers via $j$.

This equality implies, firstly, that the image of $(1-\zeta_m)^{(1+\tau)/2}$ in $\QQ\cdot \mathcal{O}_{\QQ(m)^+,S(m)}^\times$ is stable under the action of the idempotent $e_{(m)} := e_{(\QQ(m)^+)}$ and then, secondly, that its image under the isomorphism (\ref{dir iso}) is equal to $e_{(m)}\cdot \theta_{\QQ(m)^+,S(m)}^*(0)\cdot (w_j-w_0)$, where $w_j$ is the archimedean place of $\QQ(m)^+$ that corresponds to $j$ and $w_0$ is any choice of place of $\QQ(m)^+$ that lies above a prime divisor of $m$.

This latter fact then combines with the explicit definition (\ref{rational-projection-map}) of the map $\Theta_{\QQ(m)^+}$ to imply that %
\begin{equation}\label{zeta cyclo} \Theta_{\QQ(m)^+}(\mathfrak{z}_{\QQ(m)^+}) = e_{(m)}((1-\zeta_m)^{(1+\tau)/2}) = (1-\zeta_m)^{(1+\tau)/2},\end{equation}
as required.
\end{proof}

\begin{lemma}\label{etnc basis} The $\cR^{\rm s}$-module $\VS(\QQ^{\rm s}/\QQ)$ is free of rank one, with basis $(\mathfrak{z}_{E})_{E \in \Omega^+}$.
\end{lemma}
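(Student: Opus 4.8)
The plan is to show that $\VS(\QQ^{\rm s}/\QQ)$ is free of rank one by exhibiting the proposed element $\mathfrak{z}=(\mathfrak{z}_E)_{E\in\Omega^+}$ as a basis, and the strategy will be to check this prime-by-prime using Proposition \ref{invert prop}. Fix a prime $p$. By Proposition \ref{invert prop} the pro-$p$ completion $\widehat{\VS(\QQ^{\rm s}/\QQ)}^p$ is free of rank one over $\widehat{\cR^{\rm s}}^p=\ZZ_p[[\Gal(\QQ^{\rm s}/\QQ)]]$, so it suffices to prove that the image of $\mathfrak{z}$ in this pro-$p$ completion is a $\ZZ_p[[\Gal(\QQ^{\rm s}/\QQ)]]$-generator. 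Equivalently, using the cofinal tower $(K_n)_n$ furnished by Lemma \ref{omega-cofinal-tower-lemma} (here realized by the fields $\QQ(m)^+$ for $m\in\NN^{\ddag}$, which are cofinal in $\Omega^+$), it is enough to show that for each $E$ in the tower the element $\mathfrak{z}_E$ is a $\ZZ_p[\cG_E]$-basis of $\ZZ_p\otimes_\ZZ\Det_{\cG_E}(C_{E,S(E)})$, compatibly under the transition maps $\nu_{E'/E}$.

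First I would verify the integral generation claim at each finite level. Since $\Det_{\cG_E}(C_{E,S(E)})$ is an invertible $\ZZ[\cG_E]$-module (by Proposition \ref{new one}, $C_{E,S(E)}$ is a perfect complex), its pro-$p$ completion $\Xi_E$ is free of rank one over $\ZZ_p[\cG_E]$, and an element is a basis precisely when its image in $\Xi_E/\mathfrak{m}$ is nonzero for every maximal ideal $\mathfrak{m}$, equivalently when, after inverting $p$ and decomposing into $\chi$-components over $\QQ_p^c$, each component $e_\chi\mathfrak{z}_E$ is nonzero and, additionally, $\mathfrak{z}_E$ is not divisible by $p$. The key input here is the validity of the relevant case of the equivariant Tamagawa number conjecture (equivalently, the leading-term formula for Dirichlet $L$-series): the element $\theta^*_{E,S(E)}(0)$ is a unit in $\RR[\cG_E]^\times$ with all $\chi$-components nonzero by the analytic class number formula applied to each character (this is exactly the content of the nonvanishing of leading terms), so its preimage $\mathfrak{z}_E$ under the $\RR[\cG_E]$-isomorphism $\vartheta_E$ spans $\RR\otimes_\ZZ\Det_{\cG_E}(C_{E,S(E)})$; integrality and the non-divisibility by $p$ then follow from the known cases of the ETNC for $\QQ$ and Tate motives over abelian fields (due to Burns--Greither and Flach, building on Mazur--Wiles and the Main Conjecture), which is precisely the assertion that the image of $\mathfrak{z}_E$ generates $\Det_{\cG_E}(C_{E,S(E)})$ over $\ZZ[\cG_E]$ after localization at each $p$.

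The remaining point is compatibility: I must check that $\nu_{E'/E}(\mathfrak{z}_{E'})=\mathfrak{z}_E$ for $E\subseteq E'$ in the tower, so that $(\mathfrak{z}_E)_E$ genuinely defines an element of the inverse limit. This reduces, via the explicit definition of $\nu_{E'/E}$ as a composite built from base change, Proposition \ref{new one}(iv), the exact triangle of Proposition \ref{new one}(iii), and the evaluation-map identification \eqref{triv triv}, to the standard functoriality of leading terms of $L$-series under change of field together with the compatibility of the Dirichlet regulator isomorphisms $\lambda_E$ with norm maps and the local Euler-factor contributions at the newly ramified primes; these compatibilities are exactly the ones underlying the fact that the collection of $\theta^*_{E,S(E)}(0)$ transforms correctly, and they are precisely what was used implicitly in proving Theorem \ref{tech req}(i). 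The main obstacle is the appeal to the (known) equivariant leading-term conjecture over $\QQ$ to obtain integrality of $\mathfrak{z}_E$ and non-divisibility by $p$ for every $p$; granting that, the rank-one freeness over $\cR^{\rm s}$ follows formally from Proposition \ref{invert prop} together with Lemma \ref{omega-cofinal-tower-lemma} by the usual Nakayama/Mittag-Leffler argument already employed in the proof of Proposition \ref{invert prop}.
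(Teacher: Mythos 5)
Your proposal correctly identifies the two essential inputs — the known validity of the equivariant Tamagawa number conjecture for $(h^0(\Spec(E)),\ZZ[\cG_E])$ (Burns--Greither, Flach), which shows each $\mathfrak{z}_E$ is a $\ZZ[\cG_E]$-basis of $\Det_{\cG_E}(C_{E,S(E)})$, and the compatibility $\nu_{E'/E}(\mathfrak{z}_{E'})=\mathfrak{z}_E$. These are exactly the two steps in the paper's proof. However, there are two points worth flagging.

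First, the detour through pro-$p$ completions and Proposition \ref{invert prop} is both unnecessary and not quite sound as a reduction. Once one knows (a) each $\mathfrak{z}_E$ is a $\ZZ[\cG_E]$-basis of $\Det_{\cG_E}(C_{E,S(E)})$ and (b) $\nu_{E'/E}(\mathfrak{z}_{E'})=\mathfrak{z}_E$, the maps $\ZZ[\cG_E]\to\Det_{\cG_E}(C_{E,S(E)})$, $r\mapsto r\cdot\mathfrak{z}_E$, are compatible isomorphisms of inverse systems, so passing to the limit directly gives $\cR^{\rm s}\cong\VS(\QQ^{\rm s}/\QQ)$ with no completion or Nakayama/Mittag--Leffler input required. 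Conversely, if you really tried to argue only at the level of pro-$p$ completions, the claim that `the image of $\mathfrak{z}$ generates $\widehat{\VS(\QQ^{\rm s}/\QQ)}^p$ over $\widehat{\cR^{\rm s}}^p$ for every $p$' does not by itself return you to the desired assertion about $\cR^{\rm s}$, since $\VS(\QQ^{\rm s}/\QQ)$ and $\cR^{\rm s}$ are not profinite and vanishing after all pro-$p$ completions does not detect an arbitrary cokernel. In this case the honest route is the one the paper takes, and the ETNC statement from \cite[Prop.\ 3.4]{bks1} already gives the $\ZZ[\cG_E]$-integrality and basis property in one stroke, without a separate ``nondivisibility by $p$'' discussion.

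Second, the compatibility $\nu_{E'/E}(\mathfrak{z}_{E'})=\mathfrak{z}_E$ is the genuinely computational heart of the lemma, and your proposal both underestimates it and misattributes it. It does not follow from what `was used implicitly in proving Theorem \ref{tech req}(i)': the diagram there relates $\Theta_{E'}$, $\Theta_E$, $\nu_{E'/E}$ and the norm (and only over the locus where $e_\chi(P_{E'/E})\neq 0$), whereas here one needs to compare the archimedean trivialization $\vartheta_{E'}$ (restricted to $\Gal(E'/E)$-invariants) with $\vartheta_E\circ\nu_{E'/E}$ directly. The paper does this by an explicit analysis of the complex $\Psi_v$ for each $v\in S(E')\setminus S(E)$, the two trivializations of $\Det_{\cG_E}(\Psi_v)_\chi$ (via cohomology with the maps $\iota_v^0,\iota_v^1$ versus the evaluation-map identification (\ref{triv triv})), the resulting discrepancy factor $\epsilon_v^\chi$, and the Euler-factor identity $L^*_{S(E')}(\chi,0)=\bigl(\prod_v\epsilon_v^\chi\bigr)L^*_{S(E)}(\chi,0)$ coming from (\ref{changeS}); the commutativity of the relevant square is a nontrivial fact taken from the argument of \cite[Lem.\ 10]{bf}. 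You should supply this computation rather than appealing to ``standard functoriality'' or to the distribution-relation diagram.
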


\begin{proof} At the outset, we fix $E$ in $\Omega^+$ and recall (from \cite[Prop. 3.4]{bks1}) that the equivariant Tamagawa Number Conjecture for the pair $(h^0(\Spec(E)), \ZZ[\cG_E])$ asserts that $\Det_{\cG_E}(C_{E,S(E)})$ is a free $\ZZ[\cG_E]$-module with basis $\mathfrak{z}_{E}$.

We further recall that this conjecture is known to be valid by the work of the first named author and Greither in \cite{bg} and of Flach in \cite{flach}.

Given the explicit definition (in Definition \ref{definition vertical}) of $\VS(\QQ^{\rm s}/\QQ)$ as an inverse limit, the claimed result will therefore follow if we can show that for each pair of fields $E$ and $E'$ in $\Omega^+$ with $E \subseteq E'$ one has $\nu_{E'/E}(\mathfrak{z}_{E'}) = \mathfrak{z}_{E}$.

To prove this we use Remark \ref{extra S resolution remark} to identify $\DR\Gamma(\kappa(v)_\mathcal{W}, \ZZ[\cG_E])^\ast[-1]$ for each place $v$ in $S(E')\setminus S(E)$ with the complex $\Psi_v$ that is equal to $\mathbb{Z}[\cG_E]$ in degrees zero and one and has the differential $x\mapsto
(1 - \Fr_v^{-1})x$.

We write $Y_v$ for the free abelian group on the set of places of $E$ above $v$ and, fixing a place $w_v$ of $E$ above $v$,  note there are isomorphisms $\iota_v^i: H^i(\Psi_v) \cong Y_v$ for $i\in \{0,1\}$ with $\iota_v^0(x) = |\cG_{E,v}|^{-1}x\cdot w_v$ and $\iota_v^1(x) = x\cdot w_v$
where, we recall, $\cG_{E,v}$ denotes the decomposition subgroup of $v$ in $\cG_E$.

The key fact now is that the $\Gal(E'/E)$-invariants of $\vartheta_{E'}$ differs from the composite $\vartheta_E\circ\nu_{E'/E}$ only in that for each $v \in S(E')\setminus S(E)$ and each $\chi$ in $\widehat{\cG_E}$ these maps respectively use the upper and lower composite homomorphisms in the following diagram

\[\begin{CD}
(\Det_{\mathcal{G}_{E}}(\Psi_v))_\chi @> \alpha_1 >> \Det_{\CC}(Y_{v,\chi})\cdot\Det_{\CC}(Y_{v,\chi})^{-1} @> \alpha_2 >> \CC\\
@\vert @. @VV \cdot \epsilon^\chi_v V\\
  (\Det_{\mathcal{G}_{E}}(\Psi_v))_\chi @> \alpha_3 >> \Det_{\CC}(\ZZ[\cG_E]_\chi)\cdot\Det_{\CC}(\ZZ[\cG_E]_\chi)^{-1} @>\alpha_4 >> \CC.\end{CD}
\]
Here $\alpha_1$ denotes the morphism induced by the maps $\iota_v^0$ and $\iota_v^1$, $\alpha_2$ the morphism induced by multiplication by ${\rm log}({\rm N}(w_v))$, $\alpha_3$ is the obvious identification, $\alpha_4$ is induced by the identity map on $\ZZ[\cG_E]_\chi$ and $\epsilon^\chi_v$ is defined to be $1 - \chi(\Fr_v)^{-1}$ if $\chi(\Fr_v) \not= 1$ and to be $|\cG_{E,v}|^{-1}\cdot {\rm log}({\rm N}(w_v)) = {\rm log}({\rm N}(v))$ otherwise.

The claimed result then follows from the fact that the argument of \cite[Lem. 10]{bf} shows that the above diagram commutes, whilst an explicit computation using (\ref{changeS}) shows that for each $\chi$ in $\widehat{\cG_E}$ one has
\[ L_{S(E')}^*(\chi,0) = \left(\prod_{v \in S(E')\setminus S(E)}\epsilon_v^\chi\right)\cdot L_{S(E)}^*(\chi,0).\]
%
\end{proof}

\subsubsection{}In the next two results we establish some useful properties of the ideal $\mathcal{A}^{\rm s}$. We write $\mathcal{I}_{\QQ,(2)}$ for the kernel of the natural projection map $\cR^{\rm s} \to \ZZ/(2)$.

\begin{lemma}\label{tor ann lem} One has $\{r\in \mathcal{R}^{\rm s}\mid r({\rm ES}^{\rm b}_\QQ) \subseteq {\rm ES}_\QQ\} = \mathcal{I}_{\QQ,(2)} = \mathcal{A}^{\rm s}$.
\end{lemma}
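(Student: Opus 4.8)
Lemma \ref{tor ann lem} asserts two equalities. The plan is to prove them by establishing a chain of inclusions between the three sets and then closing the loop.

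The starting point is Theorem \ref{etnc prop}, which identifies ${\rm ES}^{\rm b}_\QQ$ with the free $\cR^{\rm s}$-module $\cR^{\rm s}\cdot\varepsilon$, and Theorem \ref{tech req}(ii) (specialized to $K=\QQ$), which gives $\cA^{\rm s}\cdot{\rm ES}^{\rm b}_\QQ\subseteq {\rm ES}_\QQ$; equivalently $\cA^{\rm s}\subseteq\{r\in\cR^{\rm s}\mid r({\rm ES}^{\rm b}_\QQ)\subseteq{\rm ES}_\QQ\}$. Next, since $\QQ$ has a real embedding, the discussion preceding Theorem \ref{Q theorem} already records that $\mu_L=\{\pm1\}$ for every $L$ in $\Omega(\QQ^{\rm s}/\QQ)$ and hence $\cA^{\rm s}=\cA^{\rm s}_\QQ=\cI_{\QQ,(2)}$; this is the second asserted equality and requires essentially only recalling the definition of $\cA_E$ as the annihilator of $\mu_E$ in $\ZZ[\cG_E]$ and passing to the limit over $\Omega(\QQ^{\rm s}/\QQ)$ (note each $\pi_{E'/E}$ is surjective on these annihilator ideals by Lemma \ref{torsion lemma}). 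So it remains to prove the reverse inclusion $\{r\in\cR^{\rm s}\mid r({\rm ES}^{\rm b}_\QQ)\subseteq{\rm ES}_\QQ\}\subseteq\cI_{\QQ,(2)}$.

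For this I would argue by contradiction, or more directly by a level-by-level computation. Suppose $r\in\cR^{\rm s}$ satisfies $r\varepsilon\in{\rm ES}_\QQ$; I want to show $r$ lies in the kernel of $\cR^{\rm s}\to\ZZ/(2)$, i.e. that the image $\bar r$ of $r$ under augmentation $\cR^{\rm s}\to\ZZ$ is even. Fix a small field such as $E=\QQ(\ell)^+$ for a suitable odd prime $\ell$ (or even $E=\QQ(4)^+=\QQ$ — but one wants a ramified field, so take $E=\QQ(\ell)^+$), so that $\varepsilon_E=(1-\zeta_\ell)^{(1+\tau)/2}$. By Definition \ref{def es}, membership of $r\varepsilon$ in ${\rm ES}_\QQ$ forces $r_E\cdot\varepsilon_E\in\bidual^{1}_{\cG_E}\cO^\times_{E,S(E)}=\cO^\times_{E,S(E),{\rm tf}}$, where $r_E$ is the image of $r$ in $\ZZ[\cG_E]$. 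The element $\varepsilon_E$ a priori lives only in $\QQ\otimes\bigwedge^1_{\cG_E}\cO^\times_{E,S(E)}$: because of the factor $(1+\tau)/2$, it is a genuine square root — $\varepsilon_E$ is "half" of $(1-\zeta_\ell)^{1+\tau}=(1-\zeta_\ell)(1-\bar\zeta_\ell)$, which is a cyclotomic unit times the factor coming from the ramified prime — so $\varepsilon_E$ itself is not in the integral lattice but $2\varepsilon_E$ (more precisely $(1+\tau)$ applied to $1-\zeta_\ell$, hence a genuine $S(E)$-unit of $E$) is. Concretely one checks, using the explicit description of the identification (\ref{bidual-lattice-identification}) with $r=1$ (so $\bidual^1=(\cdot)^{**}$ identifies with the torsion-free quotient of the unit group), that $r_E\varepsilon_E$ lies in $\cO^\times_{E,S(E),{\rm tf}}$ if and only if $r_E$ annihilates the class of $\varepsilon_E$ modulo the integral lattice, which is a $2$-torsion class, so if and only if the augmentation of $r_E$ is even. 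Taking the limit over $E$ gives $\bar r\in 2\ZZ$, i.e. $r\in\cI_{\QQ,(2)}$. This closes the chain: $\cA^{\rm s}=\cI_{\QQ,(2)}\subseteq\{r:r{\rm ES}^{\rm b}_\QQ\subseteq{\rm ES}_\QQ\}\subseteq\cI_{\QQ,(2)}=\cA^{\rm s}$, forcing equality throughout.

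The main obstacle is making the final inclusion precise: one must pin down exactly which $2$-torsion obstruction $\varepsilon_E$ represents relative to the lattice $\bidual^1_{\cG_E}\cO^\times_{E,S(E)}$, and verify it is nonzero (so that an $r_E$ of odd augmentation genuinely fails to carry $\varepsilon_E$ into the lattice). This amounts to checking that $(1-\zeta_m)^{(1+\tau)/2}$ is not itself an $S(E)$-unit of $\QQ(m)^+$ for the test field $E$ — equivalently that $(1-\zeta_m)^{1+\tau}$ is not a square of a cyclotomic $S$-unit, which is classical (it follows from index computations for cyclotomic units, or directly from the fact that its image under $\lambda_E$ in (\ref{dir iso}) is $\theta^*_{E,S(E)}(0)\cdot(w_j-w_0)$ with half-integral, not integral, coordinates, as computed in the proof of Theorem \ref{etnc prop}). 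Once that single nonvanishing is in hand the rest is bookkeeping with the identification (\ref{bidual-lattice-identification}) and passage to the inverse limit over $\Omega(\QQ^{\rm s}/\QQ)$. It may in fact be cleanest to prove the contrapositive directly at one well-chosen finite level and then invoke that $\cI_{\QQ,(2)}$ is the full preimage of $2\ZZ$ under augmentation, so that any $r\notin\cI_{\QQ,(2)}$ already fails the lattice condition at that level.
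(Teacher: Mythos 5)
Your high-level plan coincides with the paper's: record $\mathcal{A}^{\rm s}=\mathcal{I}_{\QQ,(2)}$ from the discussion preceding Theorem \ref{Q theorem}, get $\mathcal{A}^{\rm s}\subseteq X:=\{r\mid r({\rm ES}^{\rm b}_\QQ)\subseteq{\rm ES}_\QQ\}$ from Theorem \ref{tech req}(ii), and then prove the reverse inclusion by showing $\varepsilon_E$ fails to lie in $\mathcal{O}^\times_{E,S(E),{\rm tf}}$ for a suitable $E=\QQ(m)^+$. That is exactly the skeleton of the proof in the paper.

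Two points, however, where your write-up departs from a correct argument and would need to be repaired. First, the reduction to ``one nonvanishing at one level'' can be done much more cheaply than your ``iff the augmentation of $r_E$ is even'' claim, and that claim is not established by what you write: the quotient of the rational lattice containing $\varepsilon_E$ by $\mathcal{O}^\times_{E,S(E),{\rm tf}}$ is an $\FF_2[\cG_E]$-module, so the set of $r_E$ killing the class of $\varepsilon_E$ is a priori some ideal of $\FF_2[\cG_E]$, and you would have to show it equals the augmentation kernel. You do not need this: $X$ is an ideal of $\cR^{\rm s}$ containing $\mathcal{I}_{\QQ,(2)}$, and $\cR^{\rm s}/\mathcal{I}_{\QQ,(2)}\cong\ZZ/2$ is simple, so $X$ equals $\mathcal{I}_{\QQ,(2)}$ or $\cR^{\rm s}$; the latter just means $1\in X$, i.e.\ $\varepsilon\in{\rm ES}_\QQ$. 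The paper uses precisely this observation (writing $r=r'+1$ with $r'\in\mathcal{I}_{\QQ,(2)}$). Second, and more seriously, your justification of the key nonvanishing is not sound. The remark that the image of $\varepsilon_E$ under $\lambda_E$ is $\theta^\ast_{E,S(E)}(0)\cdot(w_j-w_0)$ ``with half-integral, not integral, coordinates'' does not prove anything about integrality in the unit lattice: the $\lambda_E$-image lives in $\RR\otimes_\ZZ X_{E,S(E)}$ with transcendental coefficients, so ``half-integral coordinates'' there has no bearing on whether $\varepsilon_E$ lies in $\mathcal{O}^\times_{E,S(E),{\rm tf}}$. The ``index computations'' route is not spelled out either. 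The paper instead uses the elementary identity
\[
(\varepsilon_{\QQ(m)^+})^2=(1-\zeta_m)^{1+\tau}=-\zeta_m^{-1}(1-\zeta_m)^2,
\]
so that if $\varepsilon_{\QQ(m)^+}=\pm u$ with $u\in\QQ(m)^+$, then $-\zeta_m$ would be a square in $\QQ(m)$, which is impossible when $4\mid m$ (and indeed also when $m$ is an odd prime, but the paper simply takes $4\mid m$). This one-line computation is the content that your proposal is missing; without it, the final inclusion $X\subseteq\mathcal{I}_{\QQ,(2)}$ is not proved.
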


\begin{proof} We set $X:= \{r\in \mathcal{R}^{\rm s}\mid r({\rm ES}^{\rm b}_\QQ) \subseteq {\rm ES}_\QQ\}$.

We also recall that the equality  $\mathcal{I}_{\QQ,(2)} = \mathcal{A}^{\rm s}$ was explained just before the statement of Theorem \ref{Q theorem}  and that Theorem \ref{tech req}(ii) implies $X$ contains $\mathcal{A}^{\rm s}$.

On the other hand, if $r$ belongs to $\mathcal{R}^{\rm s}\setminus\mathcal{I}_{\QQ,(2)}$, then it can be written as $r = r' + 1$ with $r'\in \mathcal{I}_{\QQ,(2)}$.

To deduce $\mathcal{I}_{\QQ,(2)} = X$ it is thus enough to show that the system $\varepsilon$ that occurs in Theorem  \ref{etnc prop} does not belong to ${\rm ES}_\QQ$, or equivalently that there exists an $m$ in $\mathbb{N}^\ddag$ for which $\varepsilon_{\QQ(m)^+}$ is not contained in $\mathcal{O}^\times_{\QQ(m)^+,S(m),{\rm tf}}$.

But, if $\varepsilon_{\QQ(m)^+}$ belongs to $\mathcal{O}^\times_{\QQ(m)^+,S(m),{\rm tf}}$, then there exists an element $u$ of $\QQ(m)^+$ with $\varepsilon_{\QQ(m)^+} = \pm u$ and hence also
\[ u^2 = (\varepsilon_{\QQ(m)^+})^2 = (1-\zeta_m)^{1+\tau} = -\zeta^{-1}_m(1-\zeta_m)^2. \]
This implies $-\zeta_m$ is a square in $\QQ(m)$ and so is impossible if $m$ is divisible by $4$.
\end{proof}

In the sequel we write $\mu_\infty$ for the union of $\mu_m$ over all $m$, set $\mu_\infty^* := \mu_\infty\setminus \{1\}$ and write $\mathcal{F}$ for the multiplicative group of $G_\QQ$-equivariant maps from $\mu_\infty^*$ to $\QQ^{c,\times}$.

We note that $\mathcal{F}$ is naturally an $\mathcal{R}$-module and that it contains the function $\Phi$ that sends each element $\zeta$ of $\mu_\infty^\ast$ to $1-\zeta$.

\begin{lemma}\label{coho comp} The image of the set $\{r \in \mathcal{R}\mid (1-\zeta_m)^{r(\tau-1)} = 1 \,\text{ for all }\, m\}$ under the projection map $\mathcal{R} \to \mathcal{R}^{\rm s}$ is $\mathcal{A}^{\rm s}$.
\end{lemma}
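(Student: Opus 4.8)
The plan is to translate the defining condition into a statement about the cyclotomic character. Write
$$X := \{r \in \mathcal{R} \mid (1-\zeta_m)^{r(\tau-1)} = 1 \ \text{ for all } m\}.$$
The starting point is the identity $(1-\zeta_m)^{\tau-1} = (1-\zeta_m^{-1})/(1-\zeta_m) = -\zeta_m^{-1}$, which gives $(1-\zeta_m)^{r(\tau-1)} = (-\zeta_m^{-1})^{r}$; equivalently, with $\Psi := \Phi^{\tau-1} \in \mathcal{F}$ the function $\zeta \mapsto -\zeta^{-1}$, one has $X = \{r \in \mathcal{R} \mid r\cdot \Psi = 1\}$. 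Since $\Psi$ is $\mu_\infty$-valued, the action of $r$ on $\Psi$ is governed entirely by the cyclotomic character, so I introduce the continuous $\ZZ$-linear map $\widetilde\chi : \mathcal{R} \to \widehat{\ZZ}$ extending the cyclotomic character $\Gal(\QQ^{\rm ab}/\QQ) \xrightarrow{\ \sim\ } \widehat{\ZZ}^\times \hookrightarrow \widehat{\ZZ}$, so that $\widetilde\chi = \varprojlim_N \widetilde\chi_N$ with $\widetilde\chi_N : \mathcal{R} \to \ZZ/N$ the linear extension of the mod-$N$ cyclotomic character.

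The first step is to prove the two inclusions
$$\ker(\widetilde\chi) \ \subseteq\ X \ \subseteq\ \{r \in \mathcal{R} \mid \aug(r) \text{ is even}\}.$$
For the left-hand inclusion: if $\widetilde\chi(r) = 0$ then, for every $m$, letting $N$ denote the (finite) order of the root of unity $-\zeta_m^{-1}$ in $\QQ(\zeta_m)^\times$, one has $(-\zeta_m^{-1})^{r} = (-\zeta_m^{-1})^{\widetilde\chi_N(r)} = 1$, since $\widetilde\chi_N(r)$ is the image of $\widetilde\chi(r)=0$ in $\ZZ/N$. For the right-hand inclusion: take $m = 3$; the root of unity $-\zeta_3^{-1}$ has order $6$, so the defining condition forces $\widetilde\chi_6(r) = 0$ in $\ZZ/6$, hence $\widetilde\chi_2(r) = 0$ in $\ZZ/2$; but for every $g$ the value $\chi_{\rm cyc}(g) \in \widehat{\ZZ}^\times$ is odd, so $\widetilde\chi_2(r)$ equals the reduction of $\aug(r)$ modulo $2$, whence $\aug(r)$ is even.

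Next, recall that $\Gal(\QQ^{\rm ab}/\QQ^{\rm s})$ is generated by $\tau$, so $\mathcal{R}^{\rm s} = \mathcal{R}/(\tau-1)\mathcal{R}$; that $\aug$ descends to $\mathcal{R}^{\rm s}$; and that, as noted before Theorem \ref{Q theorem}, $\mathcal{A}^{\rm s} = \mathcal{I}_{\QQ,(2)}$ is the kernel of the resulting mod-$2$ augmentation $\mathcal{R}^{\rm s} \to \ZZ/2$. Writing $\pi : \mathcal{R} \to \mathcal{R}^{\rm s}$ for the projection, the right-hand inclusion above yields $\pi(X) \subseteq \pi(\{r \mid \aug(r) \text{ even}\}) = \mathcal{A}^{\rm s}$. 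For the reverse inclusion I would combine the left-hand inclusion with the identity $\pi(\ker\widetilde\chi) = \mathcal{A}^{\rm s}$, proved as follows: given $s \in \mathcal{A}^{\rm s}$, choose a lift $r_0 \in \mathcal{R}$ with $\aug(r_0)$ even, so $\widetilde\chi(r_0) \in 2\widehat{\ZZ}$; since $\widetilde\chi((\tau-1)t) = (\chi_{\rm cyc}(\tau)-1)\widetilde\chi(t) = -2\widetilde\chi(t)$ for all $t$, and $\widetilde\chi$ is surjective, one can choose $t$ with $\widetilde\chi((\tau-1)t) = -\widetilde\chi(r_0)$; then $r := r_0 + (\tau-1)t$ lies in $\ker(\widetilde\chi) \subseteq X$ and satisfies $\pi(r) = s$. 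Together these give $\mathcal{A}^{\rm s} = \pi(\ker\widetilde\chi) \subseteq \pi(X) \subseteq \mathcal{A}^{\rm s}$, which is the assertion of the lemma.

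The only ingredient that requires real (if routine) work is the surjectivity of $\widetilde\chi : \mathcal{R} \to \widehat{\ZZ}$ used in the last step. I would deduce it by noting that each $\widetilde\chi_N$ is surjective — it sends the identity of $\Gal(\QQ^{\rm ab}/\QQ)$ to $1 \in \ZZ/N$ — and that the inverse system $(\ker\widetilde\chi_N)_N$ has surjective transition maps, so it is Mittag--Leffler and $\widetilde\chi = \varprojlim_N \widetilde\chi_N$ is onto; checking the transition maps amounts to the elementary verification that, for $M \mid N$, the mod-$N$ cyclotomic character sends the kernel of $\ZZ[\Gal(\QQ(N)/\QQ)] \to \ZZ[\Gal(\QQ(M)/\QQ)]$ onto $M\cdot(\ZZ/N)$. (Alternatively, one can build a preimage of a given element of $\widehat{\ZZ}$ directly by induction along a cofinal tower of moduli.) I expect this congruence bookkeeping — equivalently, the verification that an arbitrary lift of an element of $\mathcal{A}^{\rm s}$ can be corrected within its $(\tau-1)\mathcal{R}$-coset so as to land in $\ker\widetilde\chi$ — to be the main obstacle; once the reformulation in terms of $\widetilde\chi$ is in place, the two inclusions of the first step are immediate.
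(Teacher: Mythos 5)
Your proposal is correct in outline and takes a genuinely different route from the paper. The paper invokes the exact sequence $0 \to \widehat{\ZZ}(1) \to \mathcal{R}\cdot\Phi \to \mathcal{R}^{\rm s} \to 0$ of \cite[Th.~1.2]{bs} as a black box and then runs the long exact sequence of Tate cohomology for $\Gamma = \langle\tau\rangle$, computing the connecting map explicitly; you instead reduce everything to direct congruence computations with the linearly extended cyclotomic character $\widetilde\chi:\mathcal{R}\to\widehat{\ZZ}$. Your argument is more elementary and self-contained (it does not rely on \cite{bs} at all), at the cost of the Mittag--Leffler bookkeeping at the end. The reformulation $X = \{r : r\cdot\Psi = 1\}$ via $(1-\zeta_m)^{\tau-1} = -\zeta_m^{-1}$, the two inclusions $\ker\widetilde\chi \subseteq X \subseteq \{\aug(r)\ \text{even}\}$, the identity $\widetilde\chi((\tau-1)t) = -2\widetilde\chi(t)$, and the final assembly are all correct.

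There is, however, a genuine inaccuracy in the last step as written. You assert that the transition maps of the inverse system $(\ker\widetilde\chi_N)_N$ are surjective, and that this reduces to showing $\widetilde\chi_N(\ker(\ZZ[\Gal(\QQ(N)/\QQ)]\to\ZZ[\Gal(\QQ(M)/\QQ)])) = M\cdot(\ZZ/N)$ for $M\mid N$. This is \emph{false} when $M$ is odd. For example with $M=3$, $N=12$: the subgroup $\{a\in(\ZZ/12)^\times : a\equiv 1\ (\mathrm{mod}\ 3)\} = \{1,7\}$, so the ideal generated by $\{a-1\}$ is $6\ZZ/12\ZZ$, which is strictly smaller than $3\ZZ/12\ZZ$. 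Concretely, the element $3\in\ker\widetilde\chi_3$ has no lift to $\ker\widetilde\chi_{12}$: every lift $r_0$ satisfies $\widetilde\chi_{12}(r_0)\equiv\aug(r_0)=\aug(3)=3\ (\mathrm{mod}\ 2)$ and $\widetilde\chi_{12}(r_0)\equiv 0\ (\mathrm{mod}\ 3)$, so $\widetilde\chi_{12}(r_0)\in\{3,9\}$ and never vanishes. The obstruction is precisely the parity phenomenon you exploit elsewhere: for $N$ even, $\widetilde\chi_N(r)\equiv\aug(r)\ (\mathrm{mod}\ 2)$, and this constraint cannot be relaxed by modifying within $\ker\pi_{NM}$ when $M$ is odd. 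The fix is simple: restrict the inverse system to a cofinal tower of \emph{even} moduli (say $N$ divisible by $2$, or take $N_k = \prod_{p\le k} p^k$ for $k\ge 2$), where one does indeed get $\widetilde\chi_N(\ker\pi_{NM}) = M\cdot(\ZZ/N)$ for $M\mid N$ both even. Along such a tower the transition maps are surjective, the system is Mittag--Leffler, and $\widetilde\chi$ is surjective as you need. Your alternative suggestion of building a preimage inductively along a cofinal tower of moduli is sound for exactly the same reason, provided the moduli are chosen even from the outset.
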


\begin{proof} Write $\widehat{\ZZ}(1)$ for the inverse limit of the groups $\mu_m$ with respect to the transition morphisms $\mu_m\to \mu_{m'}$ for each divisor $m'$ of $m$ that are given by raising to the power $m/m'$.

Then $\widehat{\ZZ}(1)$ is naturally an $\mathcal{R}$-module and the result of \cite[Th. 1.2]{bs} proves that it lies in an exact sequence of $\mathcal{R}$-modules
\begin{equation*}\label{th 1.2 seq} 0 \to \widehat{\ZZ}(1) \to \mathcal{R}\cdot \Phi \xrightarrow{\pi} \mathcal{R}^{\rm s} \to 0\end{equation*}
in which $\pi$ sends each element $r\cdot \Phi$ to the projection of $r$ in $\mathcal{R}^{\rm s}$.

Set $\Gamma = \Gal(\CC/\RR) = \langle\tau\rangle$. Then, since $H^0(\Gamma, \widehat{\ZZ}(1)) $ vanishes, the long exact sequence of Tate cohomology of this sequence gives an exact sequence
\begin{equation}\label{second seq} 0 \to \hat H^0(\Gamma, \mathcal{R}\cdot \Phi) \to \hat H^0(\Gamma, \mathcal{R}^{\rm s}) \xrightarrow{\pi'} \hat H^{-1}(\Gamma,\widehat{\ZZ}(1)).\end{equation}
Here the map $\pi'$ is the composite of the connecting homomorphism
\[ \hat H^0(\Gamma, \mathcal{R}^{\rm s}) \to \hat H^{1}(\Gamma,\widehat{\ZZ}(1))\]
that is induced by the above exact sequence and the canonical isomorphism $\hat H^1(\Gamma,\widehat{\ZZ}(1)) \cong \hat H^{-1}(\Gamma,\widehat{\ZZ}(1))$ that results from the fact that Tate cohomology over $\Gamma$ is periodic of order $2$.

Now it is clear that the groups $\hat H^0(\Gamma, \mathcal{R}^{\rm s})$ and $\hat H^{-1}(\Gamma,\widehat{\ZZ}(1))$ respectively identify with
$\mathcal{R}^{\rm s}/(2\cdot \mathcal{R}^{\rm s})$ and $\ZZ/(2\cdot \ZZ)$ and an explicit computation shows that, with respect to these identifications, the map $\pi'$ in (\ref{second seq}) is induced by the natural projection map $\mathcal{R}^{\rm s} \to \ZZ$.


Given these facts, the exact sequence (\ref{second seq}) implies that the image under $\pi$ of
 $H^0(\Gamma,\mathcal{R}\cdot \Phi)$ is equal to $\mathcal{I}_{\QQ,(2)}$, and hence also to $\mathcal{A}^{\rm s}$ by Lemma \ref{tor ann lem}.

 It therefore suffices to note that an element $r$ of $\mathcal{R}$ is such that $r\cdot\Phi$ belongs to $H^0(\Gamma,\mathcal{R}\cdot \Phi)$ if and only if one has $(1-\zeta_m)^{r(\tau-1)} = 1$ for all $m$.
\end{proof}

\subsection{Circular distributions and the proof of Theorem \ref{Q theorem}}\label{Q thm sec} In this section we quickly review Coleman's Conjecture concerning circular distributions and then prove Theorem \ref{Q theorem}.

\subsubsection{Coleman's Conjecture} In the 1980s Coleman defined a `circular distribution', or `distribution' for short in the sequel, to be a function $f$ in $\calF$  that satisfies the relation
\begin{equation*}\label{defining prop} \prod_{\zeta^a = \varepsilon} f(\zeta) = f(\varepsilon)\end{equation*}
for all natural numbers $a$ and all $\varepsilon$ in $\mu_\infty^*$. (A similar notion was also subsequently introduced by Coates in \cite{coates} in the context of abelian extensions of imaginary quadratic fields.)

It is clear that the subset $\calF^{\rm d}$ of $\calF$ comprising all such distributions is naturally an $\mathcal{R}$-module.

Further, recalling the module ${\rm ES}_\QQ^{\rm cl} = {\rm ES}^{\rm cl}(\QQ^{\rm ab}/\QQ)$ of classical Euler systems of rank one for $\mathbb{G}_m$ over $\QQ^{\rm ab}/\QQ$, as discussed in Remark \ref{not classical}, there exists a canonical isomorphism of $\mathcal{R}$-modules
\begin{equation}\label{es iso} \calF^{\rm d} \cong {\rm ES}_\QQ^{\rm cl}.\end{equation}
This map sends each $f$ in $\calF^{\rm d}$ to the unique element $c_f$ of ${\rm ES}_\QQ^{\rm cl}$ with $c_{f,\QQ(m)} = f(\zeta_m)$ for all $m\in \mathbb{N}^\dagger$ and its inverse sends each $c$ in ${\rm ES}_\QQ^{\rm cl}$ to the unique function $f_c$ in $\mathcal{F}^{\rm d}$ that satisfies
\[ f_c(\zeta_m) := \begin{cases} c_{\QQ(m)}, &\text{ if $m\in \mathbb{N}^\dagger$}\\
                                 (c_{\QQ(m')})^{1-{\rm Fr}_2^{-1}},  &\text{ if $m= 2m'$ with $m'$ odd and $m' > 1$,}\\
                                  {\rm N}_{\QQ(4)/\QQ}(c_{\QQ(4)}), &\text{ if $m= 2$.}\end{cases}\]
(In this regard, we point out that our definition of the set of fields $\Omega(\QQ^c/\QQ)$ means that it does not contain $\QQ$ and hence that a system in ${\rm ES}_\QQ^{\rm cl}$ has no component at the field $\QQ$.)

The function $\Phi$ that is defined just prior to Lemma \ref{coho comp} clearly belongs to $\mathcal{F}^{\rm d}$ and Coleman's Conjecture predicts  that
\begin{equation}\label{cc eq} \calF^{\rm d} = \calF^{\rm d}_{\rm tor} + \mathcal{R}\cdot \Phi.\end{equation}

Since \cite[Th. B]{Seo3} implies that every distribution of finite order has order dividing two, the isomorphism (\ref{es iso}) implies this conjecture is equivalent to asserting that for every system $c = (c_F)_F$ in ${\rm ES}_\QQ^{\rm cl}$ there exists an element $r'_c$ of $\mathcal{R}$ such that for every $m$ in $\mathbb{N}^\dagger$ one has
\begin{equation}\label{coleman} c_{\QQ(m)} = \pm \Phi(\zeta_m)^{r_c'} = \pm (1-\zeta_m)^{r'_c} \end{equation}
in $\QQ(m)^\times$. 

We further recall that the recent result of \cite[Th. 1.2]{bs} implies both that the quotient group $\calF^{\rm d}/(\calF^{\rm d}_{\rm tor} + \mathcal{R}\cdot \Phi)$ is torsion-free and also that the homomorphism of $\mathcal{R}$-modules
\[ \calF^{\rm d}/(\calF^{\rm d}_{\rm tor} + \mathcal{R}\cdot \Phi) \to (1+\tau)\mathcal{F}^{\rm d}/(\mathcal{R}\cdot \Phi^{1+\tau})\]
that sends the class of each $f$ in $\mathcal{F}^{\rm d}$ to the class of $f^{1+\tau}$ is bijective.

In addition, if $f$ belongs to  $\mathcal{F}^{\rm d}$, then \cite[Th. 5.1 and Rem. 5.2]{bs} implies that to show $f^{1+\tau}$ belongs to $\mathcal{R}\cdot \Phi^{1+\tau}$ it is enough to show the existence of a natural number $t$ in $\mathbb{N}^\ast$ and an element $r = r_{f}$ of $\mathcal{R}$ for which one has $f(\zeta_m)^{1+\tau} = (1-\zeta_m)^{(1+\tau)r}$ for all $m$ in $\mathbb{N}^\dagger$ that are divisible by $t$.

These facts combine to imply that the conjectural equality (\ref{cc eq}) is valid if for each system $c$ in ${\rm ES}_\QQ^{\rm cl}$ there exist elements $t_1$ and $t_2$ in $\mathbb{N}^\ast$ and an element $r= r_{c,t_1}$ of $\mathcal{R}$ for which one has
\begin{equation}\label{explicit coleman} (c_{\QQ(m)})^{t_1(1+\tau)} = (1-\zeta_m)^{(1+\tau)r}\end{equation}
for all $m$ in $\mathbb{N}$ that are divisible by $t_2$.
%

\begin{remark}  For more details concerning Coleman's Conjecture, the reader can consult \cite{bs} or the earlier articles \cite{Seo1, Seo2, Seo3, Seo4} of the third author and the associated work of Saikia in \cite{saikia}.\end{remark}

\subsubsection{The proof of Theorem \ref{Q theorem}}\label{proof Q thm} We first assume the validity of Coleman's Conjecture and use it to deduce the validity of Conjecture \ref{main conj} in the case $K = \QQ$.

To do this we fix an element $a$ of $\mathcal{A}^{\rm s}$ and a system $c = (c_F)_F$ in ${\rm ES}_\QQ = {\rm ES}_1(\QQ^{\rm s}/\QQ)$. Then, by the discussion in Remark \ref{not classical}, there exists a canonical system $c'$ in ${\rm ES}^{\rm cl}(\QQ^{\rm s}/\QQ)$ that projects to $c^a$ in ${\rm ES}_\QQ$.

For this system $c'$ one has $c'_{\QQ(m)^+} >0$ for every $m$ in $\mathbb{N}^\ddag$. In particular, since the argument of \cite[Lem. 2.2]{Seo1} implies $c'_{\QQ(m)^+}$ is a global unit for every $m$ in $\mathbb{N}^\ddag$ that is divisible by two distinct primes, one has ${\rm N}^{\QQ(m)^+}_{\QQ}(c'_{\QQ(m)^+})= 1$ for all such $m$.

As a result, there exists a unique system $c''$ in ${\rm ES}^{\rm cl}_\QQ$ for which at each $m$ in $\mathbb{N}^\dagger$ one has
\[ c''_{\QQ(m)} := \begin{cases} c'_{\QQ(m)^+}, &\text{ if $m \in \mathbb{N}^\ddag$,}\\
                                 {\rm N}^{\QQ(9)^+}_{\QQ}(c_{\QQ(9)^+}), &\text{ if $m = 3$,}\\
                                 {\rm N}^{\QQ(8)^+}_{\QQ}(c_{\QQ(8)^+}), &\text{ if $m = 4$.}\end{cases}\]

For this system one has $(c'')^{\tau-1} = 1$. Thus, if Coleman's Conjecture (in the form of (\ref{coleman})) is valid, then Lemma \ref{coho comp} implies the existence of an element $r$ of $\mathcal{R}$ which projects to $\mathcal{R}^{\rm s}$ to give an element $a'$ of $\mathcal{A}^{\rm s}$ and is such that for every $m$ in $\mathbb{N}^\ddag$ one has
\[ c'_{\QQ(m)^+} = c''_{\QQ(m)} = \pm (1-\zeta_m)^{r}\]
and hence also
\[ (c'_{\QQ(m)^+})^{1+\tau} = (1-\zeta_m)^{(1+\tau)r}= (1-\zeta_m)^{(1+\tau)a'} = (\varepsilon_{\QQ(m)^+})^{2a'}.\]

But then in ${\rm RES}_\QQ$ one has
\[ c^a = c' = ((c')^2)^{1/2} = ((c')^{1+\tau})^{1/2} = (\varepsilon^{2a'})^{1/2}  = \varepsilon^{a'}. \]
Since $a$ is an arbitrary element of $\mathcal{A}^{\rm s}$, and $a'$ belongs to $\mathcal{A}^{\rm s}$, this proves the claimed inclusion $\mathcal{A}^{\rm s}\cdot {\rm ES}_\QQ \subseteq \mathcal{A}^{\rm s}\cdot {\rm ES}^{\rm b}_\QQ$.

To prove the converse it suffices to show the latter inclusion implies that every system in ${\rm ES}_\QQ^{\rm cl}$ satisfies the condition (\ref{explicit coleman}).

To do this we fix $c$ in ${\rm ES}_\QQ^{\rm cl}$. Then by first restricting $c$ to the subset $\Omega^+$ of $\Omega(\QQ^{\rm ab}/\QQ)$ and then reducing its values modulo torsion one obtains a system $\tilde c$ in ${\rm ES}_\QQ$.

Further, since $2$ belongs to $\mathcal{A}^{\rm s}$, the assumed inclusion $\mathcal{A}^{\rm s}\cdot {\rm ES}_\QQ \subseteq \mathcal{A}^{\rm s}\cdot {\rm ES}^{\rm b}_\QQ$ combines with Theorem \ref{etnc prop} to imply the existence of an element $r$ of $\mathcal{R}$ such that $\tilde c^4 = \varepsilon^{2r}$ in ${\rm RES}_\QQ$.

This equality of functions implies that for each $m$ in $\mathbb{N}^\ddag$ one has
\[ (c_{\QQ(m)})^{4(1+\tau)} = (\tilde c_{\QQ(m)^+})^4 = (\varepsilon_{\QQ(m)^+})^{2r} = (1-\zeta_m)^{(1+\tau)r}.\]
This shows that $c$ verifies the condition (\ref{explicit coleman}) with $t_1 = t_2 = 4$ and hence implies Coleman's Conjecture.

%

This completes  the proof of Theorem \ref{Q theorem}.

\subsection{Norm coherent sequences in $p$-power conductor cyclotomic fields}\label{nc sec}

In this final section we shall give a proof of Theorem \ref{nc thm}. To do this we fix an odd prime $p$ and for each natural number $n$ set $G_n := \Gal(\QQ(p^n)/\QQ)$ and $G_n^+ := \Gal(\QQ(p^n)^+/\QQ)$.

At the outset we recall that the discussion in \cite[\S3]{Seo4} shows that if $(a_n)_n$ is a norm coherent sequence in $\bigcup_n\QQ(p^n)$ such that each $a_n$ belongs to $\ZZ[G_n]\cdot(1-\zeta_{p^n})$, then there exists a circular distribution $f$ such that $f(\zeta_{p^n}) = a_{n}$ for all $n$.

To prove Theorem \ref{nc thm}, it is thus enough to fix $f$ in $\mathcal{F}^{\rm d}$ and show that there exists a natural number $t$ (possibly depending on $f$) for which $f(\zeta_{p^n})^{2^t}$ belongs to $\ZZ[G_n]\cdot (1-\zeta_{p^n})$ for all $n$.

Hence, since \cite[Th. B]{Seo2} implies the existence of a natural number $c$ for which $f(\zeta_{p^n})^{c}$ belongs to $\ZZ[G_n]\cdot (1-\zeta_{p^n})$ for all $n$, it is enough to show for each $n$ that there exists a natural number $t_n$ with $f(\zeta_{p^n})^{2^{t_n}}\in \ZZ[G_n]\cdot (1-\zeta_{p^n})$.

In addition, for any $t$ in $\mathbb{N}^\ast$ one has $f^{2^t} = f^{(1-\tau)2^{t-1}}f^{(1+\tau)2^{t-1}}$ and, by \cite[Th. 4.1]{bs}, one knows that
 $f(\zeta_{p^n})^{1-\tau}\in \ZZ[G_n]\cdot (1-\zeta_{p^n})$ for all $n$.

It therefore suffices to show that for each $n$, and for every odd prime $\ell$, one has
\[ f(\zeta_{p^n})^{1+\tau}\in \ZZ_\ell[G_n]\cdot \varepsilon_{n},\]
where we set
\[ \varepsilon_{n} := (1-\zeta_{p^n})^{1+\tau}.\]

For the prime $\ell=p$ the above containment is proved in \cite[Th. 3.1]{bs} and so we assume in the sequel that $\ell\not= p$.

We decompose $G_n$ as a direct product $\Lambda_n\times \Delta_n$ with $\Lambda_n$ the Sylow $\ell$-subgroup of $G_n$. We set $E := \QQ(p^n)$ and $E^+ := \QQ(p^n)^+$, write $F$ and $L$ for the fixed fields of $E$ by $\Delta_n$ and $\Lambda_n$ respectively and write $L^+$ for the maximal real subfield $L \cap E^+$ of $L$.

Then it suffices to prove that for each homomorphism $\phi: \Delta_n \to \QQ_\ell^{c,\times}$ that is trivial on $\tau$ one has %
\begin{equation}\label{required cont} f(\zeta_{p^n})^\phi\in \ZZ[G_n]_\phi\cdot \varepsilon_{n},\end{equation}
where we use the same notation for $\phi$-components as in \S\ref{H hyp sec}.

We assume first that $\phi$ is trivial. In this case we can identify $X_\phi$ for each $\Delta_n$-module $X$ as a submodule of $\ZZ_\ell\otimes_\ZZ X$ by sending each element $x^\phi$ to $\sum_{h \in \Delta_n}h(x)$.

Then, with respect to this identification, one has $\ZZ[G_n]_\phi\cdot \varepsilon_{n} = \ZZ_\ell[G_n]\cdot {\rm N}_{E/F}(\varepsilon_{n})$. In addition, since $F/\QQ$ is an $\ell$-power degree extension that is only ramified at the prime $p$, one knows that the class number of $F$ is not divisible by $\ell$ (by \cite[Th. 10.4]{wash}).

Upon combining the latter fact with the formulas proved by Sinnott in \cite[Th. 4.1 and Th. 5.1]{sinnott} one finds that ${\rm N}_{E/F}(\varepsilon_{n})$ generates over $\ZZ_\ell[G_n]$ the $\ell$-completion of the full group of $p$-units of $F$.

The required containment (\ref{required cont}) is thus true in this case since $f(\zeta_{p^n})^\phi = {\rm N}_{E/F}(f(\zeta_{p^n}))$ is a $p$-unit in $F$ (by \cite[Lem. 2.2]{Seo1}).

We now assume $\phi$ is non-trivial and regard it as a homomorphism $\Gal(L^+/\QQ) \to \QQ_\ell^{c,\times}$. We also note that 
 $L^+$ is not contained in $\QQ(\mu_\ell)$, that $\phi^2$ is not equal to the Teichm\"uller character at $\ell$ if $\ell = 3$, that $p$ is not split in $L^+$ and that the archimedean place of $\QQ$ splits in $L^+$.

This shows that the pair $F$ and $\phi$ satisfies all of the hypotheses (H$_1$), (H$_2$), (H$_3$), (H$_4$) and (H$_5$) (with $p$ replaced by $\ell$) that are listed in \S\ref{H hyp sec}.

We may therefore apply Theorem \ref{koly thm} to the system $\tilde c_f$ in ${\rm ES}_\QQ$ that is obtained from $f$ by the method used in the latter part of \S\ref{proof Q thm}.

Noting that $\mathcal{A}_{E,\phi} = \ZZ[G_n]_\phi$ (since $\ell$ is odd and $\phi$ is even), we deduce in this way that the element
 $(f(\zeta_{p^n})^\phi)^2 = (f(\zeta_{p^n})^{1+\tau})^\phi = (\tilde c_{f,E^+})^\phi$ belongs to $\Theta_{E^+}(\Det_{G^+_n}(C_{E^+,S(E)}))_\phi$.

The required containment (\ref{required cont}) is therefore true in this case since the equality (\ref{zeta cyclo}) (with $m = p^n$) combines with the argument of Lemma \ref{etnc basis} to imply that $\Theta_{E^+}(\Det_{G^+_n}(C_{E^+,S(E)}))_\phi = \ZZ[G^+_n]_\phi\cdot \varepsilon_{n}$.

This completes the proof of Theorem \ref{nc thm}.

\begin{remark} The result of Theorem \ref{nc thm} does not extend in a straightforward way to more general towers of cylotomic fields. To be specific, if $p$ is an odd prime and $m$ is a multiple of $p$ that is not a power of $p$, then there can exist norm-compatible sequences of cyclotomic units in the tower $\bigcup_{n}\QQ(mp^n)$ that are not equal to the restriction of any circular distribution. For an explicit example of this phenomenon see \cite[Lem. 2.10]{bs}. \end{remark}

\end{document}